\documentclass[a4paper, 11pt]{amsart}
\usepackage{amssymb}
\usepackage[T1]{fontenc}
\usepackage[english]{babel}
\usepackage{times}
\usepackage{a4wide}
\usepackage{color}
\usepackage{graphicx}
\usepackage{amsmath, amsthm}
\usepackage[normalem]{ulem}
\usepackage{comment}
\usepackage[numbers]{natbib}

\setlength{\textheight}{51\baselineskip}
\setlength{\textwidth}{6.375 in}
\setlength{\oddsidemargin}{0 in}    
\setlength{\evensidemargin}{0 in}
\setlength{\marginparwidth}{0.75 in}

\makeatletter
\def\section{\@startsection{section}{1}%
 \z@{.7\linespacing\@plus\linespacing}{.5\linespacing}%
 {\normalfont\scshape\Large\centering}}
\def\subsection{\@startsection{subsection}{2}%
 \z@{.5\linespacing\@plus.7\linespacing}{.5\linespacing}%
 {\normalfont\scshape\centering}}
\makeatother

\newcounter{licznik}[section]

\newtheorem{theorem}[licznik]{Theorem}
\newtheorem{Lemma}[licznik]{Lemma} 
\newtheorem{Proposition}[licznik]{Proposition}
\newtheorem{cor}[licznik]{Corollary}
\newtheorem{remark}[licznik]{Remark}
\newtheorem{definition}[licznik]{Definition}

\def\ud{d}
\def\tl{\tilde}
\def\ve{\varepsilon}

\def\limsup{\mathop{\lim\sup}}
\def\liminf{\mathop{\lim\inf}}
\def\esssup{\operatornamewithlimits{\mathrm{ess\,sup}}}
\def\essinf{\operatornamewithlimits{\mathrm{ess\,inf\vphantom{p}}}}

\def\ve{\varepsilon}
\def\ef{\mathcal{F}}
\def\ee{\mathbb{E}}
\def\er{\mathbb{R}}
\def\prob{\mathbb{P}}
\def\te{\mathcal{T}}

\def\laa{\mathcal{A}}
\def\laac{\mathcal{A}_{ac}}
\def\es{\mathcal{S}}
\def\inf{\operatornamewithlimits{inf\vphantom{p}}}
\newcommand{\ind}[1]{I_{#1}}
\newcommand{\indd}[1]{\ind{\{#1\}}}

\newcommand{\les}{\le}
\newcommand{\ges}{\ge}
\newcommand{\mcal}{\mathcal}

\newcommand{\eps}{\ve}
\newcommand{\oms}{\Omega^s}
\newcommand{\efs}{\ef^s}
\newcommand{\probs}{\prob^s}
\newcommand\mcalO{\mathcal{O}}

\newcommand{\what}{\widehat}

\makeatletter 
\def\punctuationSpace{\@ifnextchar.{}{\@ifnextchar,{}{\@ifnextchar;{}{ }}}}
\makeatother

\newcommand{\cadlag}{c\`adl\`ag\punctuationSpace}

\newcommand\as{\mbox{-a.s.}\punctuationSpace}
\renewcommand\ae{\mbox{-a.e.}\punctuationSpace}

\newcommand{\I}{I}
\newcommand{\J}{J}
\newcommand{\mcalI}{\mathcal{I}}
\newcommand{\mcalJ}{\mathcal{J}}
\newcommand{\mcalL}{{\mathcal{L}_b}}
\newcommand{\mcalG}{\mathcal{G}}
\newcommand{\mcalAcirc}{{\mathcal{A}^\circ}}
\newcommand{\mcalAcircAc}{{\mathcal{A}^\circ_{ac}}}
\newcommand{\tlmcalAcirc}{{\tilde{\mathcal{A}}^\circ}}

\definecolor{brightmaroon}{rgb}{0.7, 0.23, 0.2}
\definecolor{DarkOrange}{rgb}{0.8, 0.45, 0}

\makeatletter
\newcommand\assumptionlabel[1]{\hspace\labelsep
                               \normalfont\bfseries #1\ \ \gdef\@currentlabel{#1}}
\newenvironment{assumption}
               {\medskip\list{}{\labelwidth\z@ \itemindent-\leftmargin
                        }}
               {\endlist}
\makeatother

\begin{document}
\title[Non-Markovian Dynkin games with partial/asymmetric information]{On the value of non-Markovian Dynkin games \\ with partial and asymmetric information}
\author[T. De Angelis, N. Merkulov, J. Palczewski]{Tiziano De Angelis, Nikita Merkulov, Jan Palczewski}
\address{T.\ De Angelis: School of Management \& Economics, Dept.\ ESOMAS, University of Turin, C.so Unione Sovietica 218bis, 10134, Torino, Italy}
\email{tiziano.deangelis@unito.it (T. De Angelis)} 
\address{N.\ Merkulov and J.\ Palczewski: School of Mathematics, University of Leeds, LS2 9JT, Leeds, UK}
\email{mmnme@leeds.ac.uk (N. Merkulov)}
\email{j.palczewski@leeds.ac.uk (J. Palczewski)}
\thanks{2020 {\em Mathematics Subject Classification}: 91A27, 91A55, 91A15, 60G07, 60G40}
\keywords{non-Markovian Dynkin games, partial information, asymmetric information, optimal stopping, randomised stopping times, regular processes, predictable-jump processes.}
\thanks{{\em Acknowledgments}: T.~De Angelis gratefully acknowledges support by the EPSRC grant EP/R021201/1. We are grateful to an anonymous referee who pointed us to the existence of optimal strategies and suggested the equivalence of our topology with the one used by Baxter and Chacon \cite{BaxterChacon} and Meyer \cite{Meyer} (see our Lemma \ref{lem:top})}

\begin{abstract}
We prove that zero-sum Dynkin games in continuous time with partial and asymmetric information admit a value in randomised stopping times when the stopping payoffs of the players are general \cadlag measurable processes. As a by-product of our method of proof we also obtain existence of optimal strategies for both players. The main novelties are that we do not assume a Markovian nature of the game nor a particular structure of the information available to the players. This allows us to go beyond the variational methods (based on PDEs) developed in the literature on Dynkin games in continuous time with partial/asymmetric information. Instead, we focus on a probabilistic and functional analytic approach based on the general theory of stochastic processes and Sion's min-max theorem (M.\ Sion, Pacific J. Math., {\bf 8}, 1958, pp.\ 171-176). Our framework encompasses examples found in the literature on continuous time Dynkin games with asymmetric information and we provide counterexamples to show that our assumptions cannot be further relaxed.
\medskip

\end{abstract}

\maketitle

\section{Introduction}\label{sec:intro}

In this paper we develop a framework for the study of the existence of a value (also known as {\em Stackelberg equilibrium}) in zero-sum Dynkin games with partial/asymmetric information  in a non-Marko\-vian setting, when the payoffs are general \cadlag measurable  processes and players are allowed to use randomised stopping times. As a by-product of our method of proof we also obtain existence of optimal strategies for both players. The games are considered on both finite and infinite-time horizon and the horizon is denoted by $T$.  The payoff processes can be decomposed into the sum of a {\em regular} process (in the sense of Meyer \cite{Meyer}) and a pure jump process with mild restrictions on the direction of predictable jumps for one of the two players. Regular processes form a very broad class encompassing, for example, all \cadlag processes that are also quasi left-continuous (i.e., left-continuous over stopping times). 

We allow for a very general structure of the information available to the players. All processes are adapted to an overarching filtration $(\ef_t)$ whereas each player makes decisions based on her own filtration, representing her access to information. Letting $(\ef^{\,i}_t)$ be the filtration of the $i$-th player, with $i=1,2$, we only need to assume that $\ef^{\,i}_t\subseteq\ef_t$ for all $t\in[0,T]$. In particular, we cover the case in which players are equally (partially) informed, i.e., $\ef^1_t=\ef^2_t$, and, more importantly, the case in which they have asymmetric (partial) information, i.e., $\ef^1_t\neq\ef^2_t$.

Under this generality we prove that Dynkin games with second-mover advantage admit a value in mixed strategies (which in this context are represented by randomised stopping times) and optimal strategies exist for both players. 

Our framework encompasses most (virtually all) examples of zero-sum Dynkin games (in continuous time) with partial/asymmetric information that we could find in the literature (see, e.g., De Angelis et al. \cite{DGV2017}, \cite{DEG2020}, Gensbittel and Gr\"un \cite{GenGrun2019}, Gr\"un  \cite{Grun2013} and Lempa and Matom\"aki \cite{lempa2013}) and we give a detailed account of this fact in Section \ref{sec:examples} (notice that \cite{DGV2017} and \cite{lempa2013} obtain a saddle point for the game in pure strategies, i.e., using stopping times, but in very special examples). Broadly speaking, all those papers' solution methods hinge on variational inequalities and PDEs and share two key features: (i) a specific structure of the information flow in the game and (ii) the Markovianity assumption. In our work instead we are able to analyse games at a more abstract level that allows us to drop the Markovianity assumption and to avoid specifying an information structure. Of course the cost to pay for such greater generality is a lack of explicit results concerning the value and the optimal strategies (beyond their existence), which instead may be obtained in some problems satisfying (i) and (ii) above. We also show by several counterexamples that our main assumptions cannot be further relaxed as otherwise a value for the game may no longer exist.

Our methodology draws on the idea presented in Touzi and Vieille \cite{TouziVieille2002} of using Sion's min-max theorem (Sion \cite{Sion1958}). In \cite{TouziVieille2002} the authors are interested in non-Markovian zero-sum Dynkin games with full information in which first- and second-mover advantage may occur at different points in time, depending on the stochastic dynamics of the underlying payoff processes. In that context randomisation is essentially used by the players to attain a value in the game and avoid stopping simultaneously (another general study on such class of problems, but without using Sion's theorem, is contained in Laraki and Solan \cite{LarakiSolan2005}). Since our set-up is different, due to the partial/asymmetric information features and relaxed assumptions on the payoff processes, we encounter some non-trivial technical difficulties in repeating arguments from \cite{TouziVieille2002}; for example, our class of randomised stopping times is not closed with respect to the topology used in \cite{TouziVieille2002} (see Remark \ref{rem-TV-norm-doesnt-work}). For this reason we develop an alternative approach based on the general theory of stochastic processes combined with ideas from functional analysis.

\subsection{Literature review}
Existence of a value (Stackelberg equilibrium) in zero-sum Dynkin games is a research question that goes back to the 70's in the classical set-up where players have full and symmetric information. A comprehensive and informative review of the main results since Dynkin's inception of stopping games \cite{dynkin1969} is contained in the survey paper by Kifer \cite{kifer2013}. Here we recall that early results on the existence of a value in a diffusive set-up were obtained by Bensoussan and Friedman \cite{bensoussan1974} via PDE methods, and by Bismut \cite{bismut1977} via probabilistic methods (and allowing for processes with jumps). Those results were later extended to right-continuous Markov processes by Stettner \cite{stettner1982} (see also Stettner \cite{stettner1982b} and \cite{stettner1984}). In the non-Markovian setting the early results are due to Lepeltier and Maingueneau \cite{lepeltier1984}. Around the year 2000 zero-sum Dynkin games gained popularity thanks to their applications in mathematical finance suggested by Kifer \cite{kifer2000} (see also Kyprianou \cite{kyprianou2004} for an early contribution). Numerous other papers have addressed related questions, including the existence of value and optimal strategies, with various methods. The interested reader may consult Ekstr\"om and Peskir \cite{ekstrom2008} for modern results in a general Markovian setting or Ekstr\"om and Villeneuve \cite{ekstrom2006} for the special case of one-dimensional linear diffusions. It is also worth mentioning that nonzero-sum Dynkin games have been studied by many authors including, e.g., Hamadene and Zhang \cite{hamadene2010} for non-Markovian games,  Attard \cite{attard2018} for general Markovian games and De Angelis et al. \cite{de2018nash} for games on one-dimensional linear diffusions. All the papers mentioned in this (largely incomplete) literature review deal with players holding full and symmetric information and the value is found in {\em pure strategies}, i.e., in stopping times. 

Yasuda \cite{yasuda1985} was probably the first to study the existence of a value for Dynkin games with {\em randomised stopping times}, in the case of discrete-time Markov processes. In that context, randomisation is specified by assigning a probability of stopping at each time $n=1,2,\ldots$. A similar type of games for Markov chains with an absorbing state was also studied by Domansky \cite{domansky2002}. Rosenberg, Solan and Vieille \cite{rosenberg2001} developed more general results by removing the Markovianity assumption and the assumption of an ordering of payoffs (i.e., they did not require that there be a first- or second-mover advantage). Randomised stopping times are also used in mathematical finance, in the context of pricing and hedging game options in discrete time with transaction costs (see \cite[Sec.\ 5]{kifer2013}), and in mathematical economics, to construct subgame-perfect equilibria in (nonzero-sum) Dynkin games (see, e.g., Riedel and Steg \cite{riedel2017}). In continuous time, the most recent results on the existence of a value for non-Markovian zero-sum Dynkin games with randomised stopping times are contained in Touzi and Vieille \cite{TouziVieille2002} and in Laraki and Solan \cite{LarakiSolan2005}.

We emphasise that in all the papers mentioned above players are equally informed and the need for randomisation stems mainly from a specific structure of the game's payoff (often due to the lack of an ordering of the payoff processes that induces alternating first- and second-mover advantage). Our work instead is inspired by a more recent strand of the literature on continuous-time Dynkin games that addresses the role of information across players and its impact on their strategies (see also Section \ref{sec:examples} for a fuller account). We believe this strand was initiated with work by Gr\"un \cite{Grun2013}, where one of the players knows the payoff process in the game while the other one has access only to an initial distribution of possible payoff processes. Gr\"un proves existence of a value in randomised strategies and the existence of an optimal strategy for the informed player. In Gr\"un and Gensbittel \cite{GenGrun2019} players observe two different stochastic processes and the payoffs in the game depend on both processes. The authors prove existence of a value and, under some additional conditions, of optimal strategies for both players. Both these papers attack the problem via a characterisation of the value as the viscosity solution of a certain variational inequality (of a type which is rather new in the literature and is inspired by similar results in the context of differential games; see, e.g., Cardaliaguet and Rainer \cite{cardaliaguet2009}). Free-boundary methods in connection with randomised stopping times are instead used in De Angelis et al. \cite{DEG2020}, where players have asymmetric information regarding the drift of a linear diffusion underlying the game, and in Ekstr\"om et al. \cite{ekstrom2017}, where the two players estimate the drift parameter according to two different models. The methods used in those papers cannot be extended to the non-Markovian framework of our paper.

Although not directly related to Dynkin games, we notice that methods from functional analysis and the general theory of stochastic processes have been used recently to study optimal stopping problems by Pennanen and Perkk\"io \cite{pennanen2018}. By relaxing the problem to include randomised stopping times the authors reduce the optimal stopping problem to an optimisation of a linear functional over a convex set of randomised stopping times and find that the solution exists as an extreme point, i.e., a pure stopping time. Closely related contributions date back to Baxter and Chacon \cite{BaxterChacon} and Meyer \cite{Meyer} who establish compactness of the set of randomised stopping times in weak topologies defined by functionals which can be interpreted as stopping of quasi left-continuous processes, in \cite{BaxterChacon}, and regular processes, in \cite{Meyer}. In our game framework we need to rely on min-max arguments instead of convex optimisation as in the optimal stopping case, so compactness arguments are not immediately applicable. However, \cite{BaxterChacon, Meyer} inspired our approach and some of our convergence results in Section \ref{sec:tech}. Furthermore, the topology of \cite{Meyer} on the set of randomised stopping times turns out to be equivalent to the topology obtained via different routes in our paper (Lemma \ref{lem:top}).

\subsection{Structure of the paper}
The paper is organised as follows. The problem is set in Section \ref{sec:setting} where we also state our main result on the existence of a value in full generality (Theorem \ref{thm:main2}). For the ease of readability we also state a version of the result under slightly stronger conditions on the underlying processes (Theorem \ref{thm:main}), which allows a more linear approach to the proof. An extension to the case of a game with conditioning on some initial information is also stated as Theorem \ref{thm:ef_0_value}. Before turning to proofs of our results we use Section \ref{sec:examples} to illustrate how our framework encompasses Dynkin games in continuous time with partial and asymmetric information that have appeared in the literature to date. Section \ref{sec:reform} is used to reformulate the Dynkin game in terms of a game of increasing (singular) controls. Section \ref{sec-Sion-existence-of-value} begins with a statement of Sion's min-max theorem which is followed by a (short) proof of our Theorem \ref{thm:main}. The latter is based on several technical results which are addressed in detail in Sections \ref{sec:tech}, \ref{sec:verif} and \ref{sec:approx}. The proof of Theorem \ref{thm:main2} is then given in Section \ref{sec:relax} and the one of Theorem \ref{thm:ef_0_value} is finally given in Section \ref{sec:ef_functional}. We close the paper with a few counterexamples in Section \ref{sec:Nikita-examples} showing that our conditions cannot be relaxed.

\section{Problem setting and main results}\label{sec:setting}
Fix a complete probability space $(\Omega,\ef, \prob)$ equipped with a filtration $(\ef_t)_{t\in[0,T]}$, where $T\in(0,\infty]$ is the time horizon of our problem. All random variables, processes and stopping times are considered on this filtered probability space unless specified otherwise. We write $\ee$ for the expectation with respect to measure $\prob$. By a \emph{measurable process} we mean a stochastic process which is $\mcal{B}([0,T])\times\mathcal F$-measurable. We denote by $\mcalL$ a Banach space of \cadlag measurable processes with the norm
\[
\| X \|_{\mcalL} := \ee \Big[\sup_{t\in[0,T]} |X_t|\Big] < \infty.
\]
A process $(X_t)_{t\in [0,T]} \in \mcalL$ is called \emph{regular} if 
\begin{align}\label{eq:cond-reg}
\ee [X_\eta - X_{\eta-}|\ef_{\eta-}] = 0\quad \prob\as \text{ for all predictable $(\ef_t)$-stopping times $\eta$.} 
\end{align}
Notice that if $T=\infty$, then $\infty$ is a one-point compactification of $[0, \infty)$, so that \cadlag and regular processes are understood as follows (c.f. \cite[Remark VI.53e]{DellacherieMeyer}): a process $(X_t)_{t \in [0, \infty]}$ is \cadlag if it is \cadlag on $[0, \infty)$ and the limit $X_{\infty-}:=\lim_{t \to \infty} X_t$ exists; the random variable $X_\infty$ if $\ef_\infty$-measurable and $\ef_\infty$ is potentially different from $\ef_{\infty-} = \sigma\big(\cup_{t \in [0, \infty)} \ef_t\big)$. Furthermore, $(X_t)_{t\in[0,\infty]}$ is regular, if it is regular on $[0, \infty)$ and 
\[
\ee[X_\infty - \lim_{t \to \infty} X_t | \ef_{\infty-} ] = 0.
\]
Throughout the paper we consider several filtrations and stochastic processes, so in order to keep the notation simple we will often use $(\ef_t)$ instead of $(\ef_t)_{t\in[0,T]}$ and similarly $(X_t)$ (or simply $X$) for a process $(X_t)_{t\in[0,T]}$.

We consider two-player zero-sum Dynkin games on the (possibly infinite) horizon $T$. Actions of the first player are based on the information contained in a filtration $(\ef^1_t) \subseteq (\ef_t)$. Actions of the second player are based on the information contained in a filtration $(\ef^2_t) \subseteq (\ef_t)$. Each player selects a random time (taking values in $[0,T]$) based on the information she acquires via her filtration: the first player's random time is denoted by $\tau$ while the second player's random time is $\sigma$. The game terminates at time $\tau \wedge \sigma \in[0, T]$ with the first player delivering to the second player the payoff
\begin{equation}\label{eqn:payoff}
\mcal{P} (\tau, \sigma) = f_{\tau} \ind{\{\tau<\sigma\}} 
+
g_{\sigma} \ind{\{{\sigma}<{\tau}\}}
+
h_{\tau} \ind{\{\tau=\sigma\}}.
\end{equation}
The first player (or $\tau$-player) is the {\em minimiser} in the game whereas the second player (or $\sigma$-player) is the {\em maximiser}. That means that the former will try to minimise the expected payoff (see \eqref{eq-uninf-payoff} below) while the latter will try to maximise it.

The {\em payoff processes} $f$, $g$ and $h$ satisfy the following conditions:
\begin{assumption}
\item[(A1)]\label{eq-integrability-cond} $f, g \in\mcalL$, 
\item[(A2)]\label{ass:regular} $f, g$ are $(\ef_t)$-adapted regular processes,
\item[(A3)]\label{eq-order-cond} $f_t\ge h_t \ge g_t$  for all $t\in[0,T]$, $\prob$\as,
\item[(A4)]\label{eq-terminal-time-order-cond} $h$ is an $(\ef_t)$-adapted, measurable process.
\end{assumption}
In particular, we do not assume that $h$ is \cadlag.

In the context of zero-sum games that we will address below, our assumption \ref{eq-order-cond} corresponds to the so-called games with the {\em second-mover advantage}, i.e., games in which both players have an incentive to wait for the opponent to end the game.

Assumption \ref{eq-integrability-cond} is natural in the framework of optimal stopping problems (see, e.g., \cite[Section 2.2]{Peskir2006}, \cite[Eq. (D.29)]{Karatzas1998}) and Dynkin games (\cite{TouziVieille2002}). With \ref{ass:regular} we replace semimartingale assumptions on $f$ and $g$ from \cite{TouziVieille2002} while imposing the regularity condition dating back to \cite{Meyer} in the optimal stopping framework. Regular processes encompass a large family of stochastic processes encountered in applications. It is straightforward to see that quasi left-continuous processes \cite[Section III.11]{RogersWilliams} are regular. In the Markovian framework all standard processes \cite[Def. I.9.2]{Blumenthal1968} and, in particular, weak Feller processes (\cite[Def. III.6.5 and Thm. III.11.1]{RogersWilliams} or \cite[Thm. I.9.4]{Blumenthal1968}) are regular. Hence, strong and weak solutions to stochastic differential equations (SDEs) driven by multi-dimensional Brownian motion (or L\`evy process and bounded coefficients) \cite[Section 6.7]{Appelbaum2009} and solutions to jump-diffusion SDEs are also regular processes. More generally, a regular process is allowed to jump at predictable times $\eta$, provided that such jumps have zero mean conditional on $\ef_{\eta-}$. 

We subsequently relax Assumption \ref{ass:regular} by allowing payoff processes with predictable jumps of nonzero mean. That is, we replace \ref{ass:regular} with the following:
\begin{assumption}
 \item[(A2')] \label{ass:regular_gen} Processes $f$ and $g$ have the decomposition $f = \tl f + \hat f$, $g = \tl g + \hat g$ with 
 \begin{enumerate}
  \item $\tl f, \tl g \in\mcalL$,
  \item $\tl f, \tl g $ are $(\ef_t)$-adapted regular processes,
  \item $\hat f, \hat g$ are $(\ef_t)$-adapted (right-continuous) piecewise-constant processes of integrable variation with $\hat f_{0} = \hat g_0 = 0$, $\Delta \hat f_T = \hat f_{T} - \hat f_{T-} = 0$ and $\Delta \hat g_T =\hat g_T- \hat g_{T-}= 0$,
  \item either $\hat f$ is non-increasing or $\hat g$ is non-decreasing.
 \end{enumerate}
\end{assumption}
Notice that there are non-decreasing processes $(\hat f^+_t), (\hat f^-_t), (\hat g^+_t), (\hat g^-_t) \in \mcalL$ starting from $0$ such that $\hat f = \hat f^+-\hat f^-$ and $\hat g = \hat g^+ - \hat g^-$ \cite[p. 115]{DellacherieMeyer}.

Under Assumption \ref{ass:regular_gen}, we allow jumps of $\hat f$ in any direction and only upward jumps of $\hat g$, or, viceversa, jumps of $\hat g$ in any direction and downward jumps of $\hat f$. This ensures a certain closedness property (see Section \ref{sec:relax}).
It is worth emphasising that further relaxation of condition \ref{ass:regular_gen} is not possible in the generality of our setting as demonstrated in Remark \ref{rem:contrad} and in Section \ref{subsec:example_jumps}. While regular processes have no restrictions on non-predictable jumps, \ref{ass:regular_gen} relaxes condition in Eq. \eqref{eq:cond-reg} by allowing predictable jumps with non-zero (conditional) mean. The necessity to restrict the direction of predictable jumps of one of the payoff processes is a new feature introduced by the asymmetry of information. In classical Dynkin games it is not necessary, see \cite{ekstrom2008, lepeltier1984, stettner1982b}.

We further require a technical assumption
\begin{assumption}
\item[(A5)]\label{ass:filtration} The filtrations $(\ef_t)$ and $(\ef^i_t)$, $i=1,2$, satisfy the usual conditions, i.e., they are right-continuous and $\ef^i_0$, $i=1,2$, contain all sets of $\prob$-measure zero.
\end{assumption}

Players assess the game by looking at the \emph{expected payoff}
\begin{equation} 
N(\tau,\sigma)= \ee \big[ \mcal{P} (\tau, \sigma) \big].
\label{eq-uninf-payoff}
\end{equation}
The game is said to have a \emph{value} if
\[
\sup_\sigma \inf_\tau N(\tau, \sigma) = \inf_\tau \sup_\sigma  N(\tau, \sigma),
\]
where, for now, we do not specify the nature of the admissible random times $(\tau,\sigma)$. The mathematical difficulty with establishing existence of a value lies in the possibility to swap the order of `inf' and `sup' and this is closely linked to the choice of admissible random times. Furthermore, an admissible pair $(\tau_*,\sigma_*)$ is said to be a {\em saddle point} (or a pair of \emph{optimal strategies}) if
\[
N(\tau_*,\sigma)\le N(\tau_*,\sigma_*)\le N(\tau,\sigma_*),
\]
for all other admissible pairs $(\tau,\sigma)$.

\begin{remark}\label{rem:ineq}
Our problem formulation enjoys a symmetry which will be later used in proofs. Since we do not make assumptions on the sign of $f$, $g$, $h$, if the value exists for the game with payoff $\mcal{P}(\tau,\sigma)$, then it also exists for the game with payoff $\mcal{P}'(\tau,\sigma):=-\mcal{P}(\tau,\sigma)$. However, in the latter game the $\tau$-player is a maximiser and the $\sigma$-player is a minimiser, by the simple fact
\begin{align}\label{eq:swap}
\sup_\sigma\inf_\tau \ee[\mcal{P}(\tau,\sigma)]=-\inf_\sigma\sup_\tau \ee[\mcal{P}'(\tau,\sigma)],
\end{align}
where, obviously, in $\mcal{P}'$ we have the payoff processes $f'_t:=-f_t$, $g'_t:=-g_t$ and $h'_t:=-h_t$.
\end{remark}

It has been indicated in the literature that games with asymmetric information may not have a value if players' strategies are stopping times for their respective filtrations, see \cite[Section 2.1]{Grun2013}. Indeed, in Section \ref{sec:Nikita-examples} we demonstrate that the game studied in this paper may not, in general, have a value if the  first player (resp. the second player) uses $(\ef^1_t)$-stopping times (resp. $(\ef^2_t)$-stopping times). It has been proven in certain Markovian set-ups that the relaxation of player controls to randomised stopping times may be sufficient for the existence of the value (see, e.g., \cite{Grun2013}, \cite{GenGrun2019}). The goal of this paper is to show that this is indeed true in the generality of our non-Markovian set-up for the game with payoff \eqref{eq-uninf-payoff}.

The framework of this paper encompasses all two-player zero-sum Dynkin games in continuous time that we found in the literature. Indeed, when $(\ef_t) = (\ef^1_t) = (\ef^2_t)$, the game \eqref{eq-uninf-payoff} is the classical Dynkin game with full information for both players. The case of $(\ef^1_t) = (\ef^2_t)$ but $(\ef^1_t) \ne (\ef_t)$ corresponds to a game with partial but symmetric information about the payoff processes (e.g., \cite{DGV2017}), whereas $(\ef^1_t) \ne (\ef^2_t)$ is the game with asymmetric information. One can have $(\ef^1_t)=(\ef_t)$, i.e., only the second player is uninformed (e.g., \cite{Grun2013}), or $(\ef^1_t)\ne(\ef_t)$ and $(\ef^2_t) \ne (\ef_t)$, i.e., both players access different information flows and neither of them has full knowledge of the underlying world (e.g., \cite{GenGrun2019}). In Section \ref{sec:examples}, we present in full detail how games with asymmetric information studied in the literature fit into our framework.

As mentioned above the concept of randomised stopping time is central in our work, so we introduce it here. For that we need to consider increasing processes: given a filtration $(\mcal{G}_t) \subseteq (\ef_t)$ let
\begin{align*}
\mcalAcirc (\mcalG_t):=&\,\{\rho\,:\,\text{$\rho$ is $(\mcalG_t)$-adapted with $t\mapsto\rho_t(\omega)$ \cadlag,}\\
&\qquad\,\text{non-decreasing, $\rho_{0-}(\omega)=0$ and $\rho_T(\omega)=1$ for all $\omega\in\Omega$}\}.
\end{align*}
In the definition of $\mcalAcirc(\mcalG_t)$ we take the opportunity to require that the stated properties hold for all $\omega \in \Omega$. This leads to no loss of generality if $\mcalG_0$ contains all $\prob$-null sets of $\Omega$. Hence for any $\omega\in \mathcal{N} \subset \Omega$ with $\mathbb{P}(\mathcal{N})=0$ we can simply set $\rho_t(\omega)=0$ for $t\in[0,T)$ and $\rho_T(\omega)=1$. Recall that in the infinite-time horizon case, $T=\infty$, we understand $\rho_T$ as an $\ef_{\infty}$-measurable random variable while $\rho_{T-}:=\lim_{t\to\infty}\rho_t$ (which exists by the assumption that $(\rho_t)$ is a \cadlag process).
Randomised stopping times can be defined as follows.
\begin{definition}\label{def-rand-st}
Given a filtration $(\mcal{G}_t) \subseteq (\ef_t)$, a random variable $\eta$ is called a \emph{$(\mcal{G}_t)$-randomised stopping time} if there exists a random variable $Z$ with uniform distribution $U([0,1])$, independent of $\ef_T$, and a process $\rho\in\mcalAcirc(\mcalG_t)$ 
such that 
\begin{equation}
\eta=\eta(\rho,Z)=\inf\{t\in [0,T]: \rho_t > Z\}, \quad \prob\as
\label{eq-def-rand-st}
\end{equation}
The variable $Z$ is called a \emph{randomisation device} for the randomised stopping time $\eta$, and the process $\rho$ is called the \emph{generating process}. The set of $(\mcal{G}_t)$-randomised stopping times is denoted by $\te^R(\mcal{G}_t)$. It is assumed that randomisation devices of different stopping times are independent.
\end{definition}
We refer to \cite{Solanetal2012}, \cite{TouziVieille2002} for an extensive discussion on various definitions of randomised stopping times and conditions that are necessary for their equivalence. To avoid unnecessary complication of notation, we assume that the probability space $(\Omega, \ef, \prob)$ supports two independent random variables $Z_\tau$ and $Z_\sigma$ which are also independent of $\ef_T$ and are the randomisation devices for the randomised stopping times $\tau$ and $\sigma$ of the two players.  

\begin{definition}\label{def-value-rand-strat}
Define
\begin{equation*}
V_*:=\sup_{\sigma \in \te^R(\ef^2_t)} \inf_{\tau\in \te^R(\ef^1_t)} N(\tau,\sigma)\quad\text{and}\quad V^*:= \inf_{\tau\in \te^R(\ef^1_t)}\sup_{\sigma \in \te^R(\ef^2_t)}  N(\tau,\sigma).
\end{equation*}
The \emph{lower value} and {\em upper value of the game in randomised strategies} are given by $V_*$ and $V^*$, respectively.  If they coincide, the game is said to have a \emph{value in randomised strategies} $V=V_*=V^*$. 
\end{definition}
The following theorem states the main result of this paper.
\begin{theorem}
\label{thm:main2}
Under assumptions \ref{eq-integrability-cond}, \ref{ass:regular_gen}, \ref{eq-order-cond}-\ref{ass:filtration}, the game has a value in randomised strategies. Moreover, if $\hat f$ and $\hat g$ in \ref{ass:regular_gen} are non-increasing and non-decreasing, respectively, there exists a pair $(\tau_*,\sigma_*)$ of optimal strategies.
\end{theorem}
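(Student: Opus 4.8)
The plan is to establish the existence of a value by applying Sion's min-max theorem to an appropriately reformulated game, where the key difficulty is the lack of closedness of the randomised stopping time set noted in the introduction. First I would reformulate the game over the generating processes $\rho\in\mcalAcirc(\ef^1_t)$ and $\rho'\in\mcalAcirc(\ef^2_t)$ rather than over the randomised stopping times themselves, following the singular-control reformulation promised in Section \ref{sec:reform}. The crucial point is that the expected payoff $N(\tau,\sigma)$, once expressed as an integral functional of the pair $(\rho,\rho')$, becomes \emph{bilinear} (or at least convex in one argument and concave in the other) in these generating processes, because the randomisation devices $Z_\tau,Z_\sigma$ are independent of $\ef_T$ and of each other, so that $\prob(\tau>t)=1-\rho_{t-}$ and similarly for $\sigma$; integrating the payoff \eqref{eqn:payoff} against the product structure yields a functional that is affine in $\rho$ for fixed $\rho'$ and affine in $\rho'$ for fixed $\rho$. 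This convexity/concavity is exactly what Sion's theorem requires on top of a suitable compactness and semicontinuity.

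Next I would equip the sets of generating processes with a weak topology under which $\mcalAcirc(\ef^1_t)$ and $\mcalAcirc(\ef^2_t)$ are \emph{convex and compact} — the topology that Lemma \ref{lem:top} identifies with the one of Meyer \cite{Meyer}, tailored precisely so that the functionals arising from \emph{regular} payoff processes are continuous. Convexity of these sets is immediate, since a convex combination of non-decreasing \cadlag processes from $0$ to $1$ is again of this form and adaptedness is preserved. For compactness I would invoke the compactness results inspired by \cite{BaxterChacon,Meyer} that are developed in Section \ref{sec:tech}. The semicontinuity of $(\rho,\rho')\mapsto N$ in this topology is where assumption \ref{ass:regular} (regularity of $f,g$) does the essential work: regularity guarantees that the integral functionals are continuous (not merely semicontinuous) along converging nets, so that one can pass to the limit on both sides. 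Having verified that the payoff is quasi-concave–quasi-convex and appropriately semicontinuous, and that the strategy sets are convex and compact, Sion's theorem delivers $V_*=V^*$, which is the existence of the value; this is the content of Theorem \ref{thm:main} under the stronger assumption \ref{ass:regular}.

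To pass from \ref{ass:regular} to the relaxed \ref{ass:regular_gen}, I would argue as in Section \ref{sec:relax}: decompose $f=\tl f+\hat f$, $g=\tl g+\hat g$ and treat the predictable-jump parts $\hat f,\hat g$ as a perturbation. The one-sided sign condition in item (4) of \ref{ass:regular_gen} (either $\hat f$ non-increasing or $\hat g$ non-decreasing) is exactly what preserves the relevant one-sided semicontinuity of the payoff functional under the weak topology — the reason being that a predictable jump in the \emph{favourable} direction for the maximiser (upward jumps of $\hat g$) or minimiser (downward jumps of $\hat f$) does not destroy the inequality needed to close the min-max gap, whereas a jump in the wrong direction would, as the counterexamples in Section \ref{subsec:example_jumps} show. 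The main obstacle, and the step I expect to demand the most care, is precisely this semicontinuity/closedness argument under the weak topology: one must show that the value is unchanged when the relaxed payoffs are approximated by regular ones and that the limiting functional still satisfies Sion's hypotheses despite the predictable jumps.

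For the second assertion — existence of a saddle point $(\tau_*,\sigma_*)$ when $\hat f$ is non-increasing and $\hat g$ non-decreasing — I would use the compactness established above to extract optimisers. Since the strategy sets are compact and the functional attains its min-max, the outer infimum and supremum are attained at some $\rho_*\in\mcalAcirc(\ef^1_t)$ and $\rho'_*\in\mcalAcirc(\ef^2_t)$; the corresponding randomised stopping times $\tau_*=\eta(\rho_*,Z_\tau)$ and $\sigma_*=\eta(\rho'_*,Z_\sigma)$ then form the saddle point, with the two-sided sign condition ensuring that \emph{both} the minimiser's and the maximiser's problems are well-posed in the limit (a one-sided condition suffices for the value but not for simultaneous attainment). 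The verification that these attained optimisers genuinely satisfy the saddle-point inequalities $N(\tau_*,\sigma)\le N(\tau_*,\sigma_*)\le N(\tau,\sigma_*)$ reduces to the continuity of $N$ in each argument separately, which I expect to follow from the same regularity-based convergence results used for the value.
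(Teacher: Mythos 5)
Your proposal founders at its central step: you apply Sion's theorem directly to the game over $\laa(\ef^1_t)\times\laa(\ef^2_t)$, claiming that regularity of $f,g$ makes $N$ continuous (or at least suitably semicontinuous) in each variable under the weak topology of $\es$. This is false, and the paper records an explicit counterexample in the remark immediately following \eqref{eq:VV}: take $\xi^n_t=\ind{\{t\ge T/2+1/n\}}$, $\xi_t=\ind{\{t\ge T/2\}}$ and $\zeta_t=\ind{\{t\ge T/2\}}$; then $\xi^n\to\xi$ in $\es$, but $N(\xi^n,\zeta)=\ee[g_{T/2}]$ for all $n$ while $N(\xi,\zeta)=\ee[h_{T/2}]$, so lower semicontinuity of $\xi\mapsto N(\xi,\zeta)$ fails whenever $\prob(h_{T/2}>g_{T/2})>0$. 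The obstruction has nothing to do with regularity of the payoff processes (the example works with constant $f,g,h$): it comes from the simultaneous-jump term $\sum_{t}h_t\,\Delta\xi_t\,\Delta\zeta_t$ in \eqref{eq-functional-in-terms-of-controls}, which is discontinuous in the \emph{controls}, not in the payoffs. Hence Sion's hypotheses simply do not hold on the full product of strategy sets, and your conclusion $V_*=V^*$ does not follow; the same failure undermines your saddle-point argument, which rests on "continuity of $N$ in each argument separately".

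What the paper does instead, and what is missing from your proposal, is a two-step asymmetric construction. First, the minimiser is restricted to absolutely continuous generating processes $\laac(\ef^1_t)$; this removes simultaneous jumps on $[0,T)$ (only the common jump at $T$ survives, handled via Lemma \ref{prop-terminal-time-jump-limit} together with $g_T\le h_T\le f_T$), and Sion's theorem, in Komiya's form so as to obtain the maximiser's optimiser $\zeta^*$, applies to this auxiliary game (Theorem \ref{th-value-cont-strat}, and Theorem \ref{th-value-cont-strat_gen} under \ref{ass:regular_gen}). Second, an approximation result (Proposition \ref{thm:conv_lipsch}, resp.\ Proposition \ref{thm:conv_lipsch_gen}) shows that for every $\xi\in\laa(\ef^1_t)$ there exist $\xi^n\in\laac(\ef^1_t)$ with $\limsup_{n}N(\xi^n,\zeta)\le N(\xi,\zeta)$; this step crucially uses the order condition \ref{eq-order-cond} through \eqref{eq-remove-common-jumps}, whence the restriction does not alter the lower value and the chain $W=W_*=V_*\le V^*\le W^*=W$ closes the gap. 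The pair of optimal strategies is then obtained not from "compactness plus continuity" but from Komiya's theorem on the auxiliary game combined with the player-swap symmetry of Remark \ref{rem:ineq}; that symmetry is precisely why \emph{both} sign conditions in \ref{ass:regular_gen} are needed for the saddle point, a point you correctly anticipated. Finally, your suggestion to treat \ref{ass:regular_gen} by approximating the relaxed payoffs by regular ones is not how the extension is carried out: the paper re-proves the semicontinuity and approximation lemmas directly for the decomposed payoffs, using the representation \eqref{eqn:decomposition_piecewise} of $\hat f,\hat g$ as sums of single-jump processes and the one-sided convergence result of Proposition \ref{prop:A}.
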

For the clarity of presentation of our methodology, we first prove a theorem with more restrictive regularity properties of payoff processes and then show how to extend the proof to the general case of Theorem \ref{thm:main2}.
\begin{theorem}
\label{thm:main}
Under assumptions \ref{eq-integrability-cond}-\ref{ass:filtration}, the game has a value in randomised strategies and there exists a pair $(\tau_*,\sigma_*)$ of optimal strategies.
\end{theorem}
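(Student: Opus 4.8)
The plan is to recast the game as a static min--max problem over the players' \emph{generating processes} and then invoke Sion's theorem. Randomised stopping times are awkward decision variables (their set is not convex), whereas the associated generating processes are, so I would work throughout with $\rho^1\in\mcalAcirc(\ef^1_t)$ and $\rho^2\in\mcalAcirc(\ef^2_t)$, writing $\tau=\eta(\rho^1,Z_\tau)$ and $\sigma=\eta(\rho^2,Z_\sigma)$. Carrying out the reformulation of Section \ref{sec:reform}, I would use that $Z_\tau$ and $Z_\sigma$ are independent of each other and of $\ef_T$, so that, conditionally on $\ef_T$, the pair $(\tau,\sigma)$ is independent with conditional laws $d\rho^1$ and $d\rho^2$. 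Integrating out the randomisation devices then gives
\[
N(\tau,\sigma)=\ee\Big[\int_{[0,T]}f_s\,(1-\rho^2_s)\,d\rho^1_s+\int_{[0,T]}g_t\,(1-\rho^1_t)\,d\rho^2_t+\sum_{u}h_u\,\Delta\rho^1_u\,\Delta\rho^2_u\Big]=:\Phi(\rho^1,\rho^2),
\]
finite thanks to \ref{eq-integrability-cond} and the ordering \ref{eq-order-cond}, where the last sum runs over the common jump times of $\rho^1$ and $\rho^2$ and encodes the event $\{\tau=\sigma\}$.

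The algebraic hypotheses of Sion's theorem are then immediate. Each $\mcalAcirc(\ef^i_t)$ is convex, since a convex combination of \cadlag non-decreasing $(\ef^i_t)$-adapted processes running from $0$ to $1$ is again of this type. From the explicit form of $\Phi$ one reads off that it is separately affine: for fixed $\rho^2$ the map $\rho^1\mapsto\Phi(\rho^1,\rho^2)$ is affine, and for fixed $\rho^1$ the map $\rho^2\mapsto\Phi(\rho^1,\rho^2)$ is affine. In particular $\Phi(\cdot,\rho^2)$ is quasi-convex and $\Phi(\rho^1,\cdot)$ is quasi-concave, as required, with no further structure needed. What remains are the purely topological hypotheses: compactness of (at least) one constraint set and the correct one-sided semicontinuity of $\Phi$, namely lower semicontinuity in $\rho^1$ (the minimiser) and upper semicontinuity in $\rho^2$ (the maximiser).

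This is the heart of the argument. I would topologise each $\mcalAcirc(\ef^i_t)$ by the weak convergence induced by the pairings $\rho\mapsto\ee[\int_{[0,T]}X_t\,d\rho_t]$ as $X$ ranges over regular processes in $\mcalL$ --- the topology of Meyer, shown to coincide with ours in Lemma \ref{lem:top}. Under this topology I would establish, as the technical backbone deferred to Sections \ref{sec:tech}, \ref{sec:verif} and \ref{sec:approx}, that $\mcalAcirc(\ef^i_t)$ is compact and that $\Phi$ enjoys the required one-sided semicontinuity. Compactness is of Baxter--Chacon/Meyer type; the delicate point is that a weak limit of $(\ef^i_t)$-adapted generating processes must remain $(\ef^i_t)$-adapted and a genuine element of $\mcalAcirc(\ef^i_t)$ --- precisely the property that fails for the Touzi--Vieille topology and forces the change of topology (cf.\ Remark \ref{rem-TV-norm-doesnt-work}). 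The semicontinuity is where Assumptions \ref{ass:regular} and \ref{eq-order-cond} enter: the integrand $s\mapsto f_s(1-\rho^2_s)$ is in general \emph{not} regular, because $\rho^2$ may carry predictable atoms, so continuity of the pairing cannot be invoked directly; instead regularity of $f$ and $g$ together with the sandwich $f\ge h\ge g$ must be used to control the diagonal term $\sum_u h_u\Delta\rho^1_u\Delta\rho^2_u$ along convergent sequences and to recover the semicontinuity in each variable.

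With the domains convex, $\Phi$ separately affine, the constraint sets compact, and the one-sided semicontinuity in place, Sion's theorem yields $V_*=\sup_{\rho^2}\inf_{\rho^1}\Phi=\inf_{\rho^1}\sup_{\rho^2}\Phi=V^*$, which is the claimed existence of a value. Existence of a saddle point then follows from compactness: the outer supremum over $\rho^2$ and infimum over $\rho^1$ are attained, at some $\rho^2_*$ and $\rho^1_*$, and the induced randomised stopping times $(\tau_*,\sigma_*)$ are optimal. I expect the compactness-and-semicontinuity step --- and within it the treatment under weak limits of the simultaneous-stopping (diagonal) term, where the asymmetry of information and the possible irregularity of the $\rho^i$ interact --- to be the main obstacle, while the reformulation and the verification of the convex-analytic hypotheses are comparatively routine.
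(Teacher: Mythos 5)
Your overall scaffolding (reformulation via generating processes, the weak topology on $\es$ equivalently Meyer's topology, compactness of the strategy sets, Sion's theorem) matches the paper's, but your route has a genuine gap at precisely the point you flag as ``the main obstacle'': the one-sided semicontinuity you need is \emph{false} on the full sets of generating processes, and neither regularity of $f,g$ nor the ordering $f\ge h\ge g$ can recover it. The paper makes this explicit with a counterexample: take $\xi^n_t=\ind{\{t\ge T/2+1/n\}}$, $\xi_t=\ind{\{t\ge T/2\}}$ and fix $\zeta_t=\ind{\{t\ge T/2\}}$; then $\xi^n\to\xi$ strongly (hence weakly) in $\es$, yet $N(\xi^n,\zeta)=\ee[g_{T/2}]$ for all $n$ while $N(\xi,\zeta)=\ee[h_{T/2}]$. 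Lower semicontinuity of $\xi\mapsto N(\xi,\zeta)$ would force $\ee[h_{T/2}]\le\ee[g_{T/2}]$, which is incompatible with \ref{eq-order-cond} whenever $\prob(h_{T/2}>g_{T/2})>0$; the ordering assumption actually works \emph{against} you here, not for you. So Sion's theorem cannot be applied directly on $\laa(\ef^1_t)\times\laa(\ef^2_t)$, and your plan stalls exactly at the semicontinuity step.

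The missing idea is the paper's two-step d\'etour. First, introduce an auxiliary game in which the minimiser is restricted to \emph{absolutely continuous} generating processes $\laac(\ef^1_t)$: this kills the off-terminal part of the diagonal term (it collapses to the single term $h_T\Delta\xi_T\Delta\zeta_T$), and for this restricted payoff the required semicontinuity does hold (Lemma \ref{lem:semi-cont}), so Sion's theorem (in Komiya's form, which gives attainment of the supremum) yields a value $W$ and an optimal $\zeta^*$ for the maximiser (Theorem \ref{th-value-cont-strat}). Second, show the restriction costs the minimiser nothing: mollifying an arbitrary $\xi\in\laa(\ef^1_t)$ via $\xi^n_t=\int_{[0,t]}\phi^n_{t-s}\,\ud\xi_s$ gives $\limsup_{n}N(\xi^n,\zeta)\le N(\xi,\zeta)$ (Proposition \ref{thm:conv_lipsch}); here $h\ge g$ enters as an \emph{inequality} in \eqref{eq-remove-common-jumps}, not as a continuity statement --- precisely because continuity fails. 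Together these give $W=W_*=V_*\le V^*\le W^*=W$. Note also that your attainment argument for the saddle point (``compactness implies the inf and sup are attained'') does not survive this fix: $\laac(\ef^1_t)$ is \emph{not} weakly compact (its closure contains jump processes), so the minimiser's optimal strategy cannot be extracted this way. The paper instead obtains $\xi^*$ by swapping the roles of the two players through the symmetry of Remark \ref{rem:ineq} and running the entire auxiliary-game-plus-approximation argument a second time, which is why the saddle point comes out of the method only as a by-product of doing the proof twice.
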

Proofs of the above theorems are given in Section \ref{sec:sions}. They rely on two key results: an approximation procedure (Propositions \ref{thm:conv_lipsch} and \ref{thm:conv_lipsch_gen}) and an auxiliary game with `nice' regularity properties (Theorem \ref{th-value-cont-strat} and \ref{th-value-cont-strat_gen}) which enables the use of a known min-max theorem (Theorem \ref{th-the-Sion}).

The $\sigma$-algebra $\ef_0$ is not assumed to be trivial. It is therefore natural to consider a game in which players assess their strategies ex-post, i.e., after observing the information available to them at time $0$ when their first action may take place. Allowing for more generality, let $\mcalG$ be a $\sigma$-algebra contained in $\ef^1_0$ and in $\ef^2_0$, i.e., containing only information available to both players at time $0$. The expected payoff of the game in this case takes the form (recall that $\tau,\sigma\in[0,T]$):
\begin{equation}\label{eqn:cond_func}
\ee\big[ \mcal{P}(\tau, \sigma) \big| \mcalG \big]
=
\ee\big[ f_{\tau} \ind{\{\tau<\sigma\}} 
+
g_{\sigma} \ind{\{{\sigma}<{\tau}\}}
+
h_{\tau} \ind{\{\tau=\sigma\}}\big| \mcalG \big].
\end{equation}

The proof of the following theorem is in Section \ref{sec:ef_functional}.
\begin{theorem}\label{thm:ef_0_value}
Under the assumptions of Theorem \ref{thm:main2} and for any $\sigma$-algebra $\mcalG \subseteq \ef^1_0 \cap \ef^2_0$, the $\mcalG$-conditioned game has a value, i.e.
\begin{equation}\label{eqn:value_ef0}
\esssup_{\sigma \in \te^R(\ef^2_t)} \essinf_{\tau\in \te^R(\ef^1_t)} \ee\big[ \mcal{P}(\tau, \sigma) \big| \mcalG \big] = \essinf_{\tau\in \te^R(\ef^1_t)}\esssup_{\sigma \in \te^R(\ef^2_t)}  \ee\big[\mcal{P}(\tau, \sigma) \big| \mcalG \big], \qquad \prob\as
\end{equation}
Moreover, if $\hat f$ and $\hat g$ in \ref{ass:regular_gen} are non-increasing and non-decreasing, respectively, there exists a pair $(\tau_*,\sigma_*)$ of optimal strategies in the sense that
\begin{equation}\label{eqn:saddleG}
\ee\big[ \mcal{P}(\tau_*, \sigma) \big| \mcalG \big]
\le
\ee\big[ \mcal{P}(\tau_*, \sigma_*) \big| \mcalG \big]
\le
\ee\big[ \mcal{P}(\tau, \sigma_*) \big| \mcalG \big], \qquad \prob\as
\end{equation}
for all other admissible pairs $(\tau,\sigma)$.

\end{theorem}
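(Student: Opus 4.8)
The plan is to deduce the conditional result Theorem~\ref{thm:ef_0_value} from the unconditional Theorem~\ref{thm:main2} by ``lifting'' the min-max structure to the level of the $\sigma$-algebra $\mcalG$. The starting observation is that the inequality $\esssup\essinf \le \essinf\esssup$ in \eqref{eqn:value_ef0} is automatic, exactly as in the unconditional case; the whole content lies in proving the reverse inequality $\prob$-a.s. The natural strategy is to introduce, for each $A \in \mcalG$, the ``restricted'' game in which we condition on $A$ (equivalently, reweight $\prob$ by $\ind{A}/\prob(A)$ on the event $\prob(A)>0$). Because $\mcalG \subseteq \ef^1_0 \cap \ef^2_0$, the set $A$ is known to both players from the outset and does not affect which random times are admissible: the classes $\te^R(\ef^1_t)$ and $\te^R(\ef^2_t)$ are unchanged. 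Thus for fixed $A$ the quantity $\ee[\mcal{P}(\tau,\sigma)\ind{A}] = \prob(A)\,\ee[\ee[\mcal{P}(\tau,\sigma)|\mcalG]\ind{A}]/\prob(A)$ defines an ordinary Dynkin game whose payoff processes are $f\ind{A}, g\ind{A}, h\ind{A}$; these inherit all of \ref{eq-integrability-cond}, \ref{ass:regular_gen}, \ref{eq-order-cond}--\ref{ass:filtration} since $\ind{A}$ is an $\ef_0$-measurable (hence $\ef_{t-}$-measurable for $t>0$), bounded, time-constant multiplier, so regularity, the ordering, and the decomposition of \ref{ass:regular_gen} are all preserved pathwise.

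First I would apply Theorem~\ref{thm:main2} to each such restricted game to obtain, for every $A \in \mcalG$,
\begin{equation}\label{eqn:restricted}
\sup_{\sigma}\inf_{\tau}\ee\big[\mcal{P}(\tau,\sigma)\ind{A}\big]
=
\inf_{\tau}\sup_{\sigma}\ee\big[\mcal{P}(\tau,\sigma)\ind{A}\big].
\end{equation}
The second step is the passage from this family of scalar equalities, indexed by $A$, to the a.s.\ equality of the two $\mcalG$-measurable random variables in \eqref{eqn:value_ef0}. I would denote the left- and right-hand conditional essential values by $W_*$ and $W^*$; both are $\mcalG$-measurable by standard properties of the essential supremum/infimum of a family of $\mcalG$-measurable conditional expectations (here one uses that conditioning on $\mcalG$ and optimising over strategies in each player's own filtration commute appropriately, because $\mcalG$ is contained in the initial $\sigma$-algebra of both filtrations). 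Given $W_* \le W^*$ a.s., to obtain equality it suffices to show $\ee[W^* \ind{A}] \le \ee[W_* \ind{A}]$ for all $A\in\mcalG$, since then $\ee[(W^*-W_*)\ind{\{W^*>W_*\}}]\le 0$ forces $\prob(W^*>W_*)=0$.

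The crux is therefore to identify the two sides of \eqref{eqn:restricted} with $\ee[W_*\ind{A}]$ and $\ee[W^*\ind{A}]$ respectively, and this is where I expect the main obstacle. The upper value side is the easier direction: by the tower property and a measurable-selection/interchange argument one shows $\inf_\tau\sup_\sigma \ee[\mcal{P}(\tau,\sigma)\ind{A}] = \ee[W^*\ind{A}]$, using that the inner $\esssup_\sigma$ can be pulled out of the expectation against $\ind{A}$ because $A\in\mcalG$ and the conditional payoff is optimised pathwise over a suitably directed (upward/downward filtering) family of strategies. The genuinely delicate point is justifying the interchange of $\essinf_\tau/\esssup_\sigma$ with the unconditional $\inf_\tau/\sup_\sigma$ against $\ind{A}$: this requires that the family $\{\ee[\mcal{P}(\tau,\sigma)|\mcalG]\}$ be directed downward in $\tau$ and upward in $\sigma$ so that the essential infimum/supremum is attained along a sequence, and that one may glue the $A$-dependent optimisers into a single strategy independent of $A$. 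I would handle this by exploiting the lattice (pasting) structure of $\te^R(\ef^i_t)$: given two randomised stopping times and an $\mcalG$-measurable (hence initially observable) set, one can paste their generating processes in $\mcalAcirc(\ef^i_t)$ to dominate both on and off $A$, yielding the required directedness; the existence of the genuine saddle point $(\tau_*,\sigma_*)$ under the non-increasing/non-decreasing hypothesis then upgrades the value equality to the pointwise saddle inequalities \eqref{eqn:saddleG}, because the saddle point produced by Theorem~\ref{thm:main2} does not depend on $A$ and the inequalities $N(\tau_*,\sigma)\le N(\tau_*,\sigma_*)\le N(\tau,\sigma_*)$ localise to each $A\in\mcalG$ and hence hold conditionally $\prob$-a.s.
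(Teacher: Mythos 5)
Your proposal is correct and takes essentially the same route as the paper: both deduce the conditional statement from Theorem \ref{thm:main2} by pasting generating processes over $\mcalG$-measurable events (legitimate precisely because $\mcalG\subseteq\ef^1_0\cap\ef^2_0$), then use the resulting upward/downward directedness of the families of conditional payoffs together with monotone convergence to identify expectations of the conditional upper and lower values with the unconditional $V^*$ and $V_*$ (this is the content of the paper's Lemmas \ref{lem:directed}, \ref{lem:directed2} and \ref{cor-exp-for-values}), and your pasting-based localisation of the saddle-point inequalities is the natural way to obtain \eqref{eqn:saddleG} from the unconditional saddle point. The only cosmetic difference is that you invoke Theorem \ref{thm:main2} for a whole family of restricted games with payoffs $f\,\ind{A}$, $g\,\ind{A}$, $h\,\ind{A}$ indexed by $A\in\mcalG$, whereas the paper applies it once to the original game (i.e.\ $A=\Omega$), which already suffices since $\underline{V}\le\overline{V}$ $\prob$\as
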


\section{Examples}\label{sec:examples}
Before moving on to prove the theorems stated above, in this section we illustrate some of the specific games for which our general results apply. We draw form the existing literature on two-player zero-sum Dynkin games in continuous time and show that a broad class of these (all those we are aware of) fits within our framework. Since our contribution is mainly to the theory of games with partial/asymmetric information, we exclude the well-known case of games with full information which has been extensively studied (see our literature review in the introduction). 

\subsection{Game with partially observed scenarios}\label{subsec:game_1}
Our first example extends the setting of \cite{Grun2013} and it reduces to that case if $J=1$ and the {\em payoff processes} $f$, $g$ and $h$ are deterministic functions of an It\^o diffusion $(X_t)$ on $\mathbb{R}^d$, i.e., $f_t=f(t,X_t)$, $g_t=g(t,X_t)$ and $h_t=h(t, X_t)$. On a discrete probability space $(\oms, \efs, \probs)$, consider two random variables $\mcalI$ and $\mcalJ$ taking values in $\{1,\ldots,\I\}$ and in $\{1,\ldots,\J\}$, respectively. Denote their joint distribution by $(\pi_{i,j})_{i=1, \ldots, \I,j=1,\ldots,\J}$ so that $\pi_{i,j} = \probs(\mcalI = i,\mathcal J=j)$. The indices $(i,j)$ are used to identify the {\em scenario} in which the game is played and are the key ingredient to model the asymmetric information feature. Consider another probability space $(\Omega^p, \ef^p, \prob^p)$ with a filtration $(\ef^p_t)$ satisfying the usual conditions, and $(\ef^p_t)$-adapted payoff processes $f^{i,j}$, $g^{i,j}$, $h^{i,j}$, with $(i,j)$ taking values in $\{1,\ldots,\I\} \times \{1,\ldots,\J\}$. For all $i,j$, we assume that $f^{i,j}$, $g^{i,j}$, $h^{i,j}$ satisfy conditions \ref{eq-integrability-cond}-\ref{eq-terminal-time-order-cond}.

The game is set on the probability space $(\Omega, \ef, \prob) := (\Omega^p \times \oms, \ef^p \vee \efs, \prob^p \otimes \probs)$. The first player is informed about the outcome of $\mcalI$ before the game starts but never directly observes $\mcalJ$. Hence, her actions are adapted to the filtration $\ef^1_t = \ef^p_t \vee \sigma(\mcalI)$. Conversely, the second player knows $\mcalJ$ but not $\mcalI$, so her actions are adapted to the filtration $\ef^2_t = \ef^p_t \vee \sigma(\mcalJ)$. Given a choice of random times $\tau\in \te^R(\ef^1_t)$ and $\sigma \in \te^R(\ef^2_t)$ for the first and the second player, the payoff is
\begin{equation*}
\mcal{P} (\tau, \sigma) = f^{\mcalI,\mcalJ}_{\tau} \ind{\{\tau<\sigma\}} 
+
g^{\mcalI,\mcalJ}_{\sigma} \ind{\{{\sigma}<{\tau}\}}
+
h^{\mcalI,\mcalJ}_\tau \ind{\{\tau = \sigma\}}.
\end{equation*}
Players assess the game by looking at the expected payoff as in \eqref{eq-uninf-payoff}. It is worth noticing that this corresponds to the so-called `{\em ex-ante}' expected payoff, i.e., the expected payoff before the players acquire the additional information about the values of $\mcalI$ and $\mcalJ$. The structure of the game is common knowledge, i.e., both players know all processes $f^{i,j}$, $g^{i,j}$ and $h^{i,j}$ involved; however, they have partial and asymmetric knowledge on the couple $(i,j)$ which is drawn at the start of the game from the distribution of $(\mcalI,\mcalJ)$.

Drawing a precise parallel with the framework of Section \ref{sec:setting}, the above setting corresponds to $f_t = f^{\mcalI,\mcalJ}_t$, $g_t = g^{\mcalI,\mcalJ}_t$, and $h_t = h_t^{\mcalI,\mcalJ}$  with the filtration $\ef_t = \ef^p_t \vee \sigma(\mcalI, \mcalJ)$. The observation flows for the players are given by $(\ef^1_t)$ and $(\ef^2_t)$, respectively. 

The particular structure of players' filtrations $(\ef^1_t)$ and $(\ef^2_t)$ allows for the following decomposition of randomised stopping times, see \cite[Proposition 3.3]{esmaeeli2018} (recall the radomisation devices $Z_\tau\sim U([0,1])$ and $Z_\sigma\sim U([0,1])$, which are mutually independent and independent of $\ef_T$). 
\begin{Lemma}\label{lem:tau_decomposition}
Any $\tau \in \te^R(\ef^1_t)$ has a representation
\begin{equation}\label{eqn:tau_decomposition}
\tau = \sum_{i=1}^\I \ind{\{\mcalI = i\}} \tau_i, 
\end{equation}
where $\tau_1,\ldots,\tau_\I \in \te^R(\ef^p_t)$, with generating processes $\xi^1,\ldots,\xi^\I \in \mcalAcirc (\ef^p_t)$ and a common randomisation device $Z_\tau$.
An analogous representation holds for $\sigma$ with $\sigma_1, \ldots, \sigma_\J \in \te^R(\ef^p_t)$, generating processes $\zeta^1_t, \ldots, \zeta^\J_t \in \mcalAcirc (\ef^p_t)$, and a common randomisation device $Z_\sigma$. 
\end{Lemma}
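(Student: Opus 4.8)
The plan is to exploit the atomic nature of $\sigma(\mcalI)$ together with the product structure of the space to split the generating process of $\tau$ scenario by scenario. First I would recall that, by Definition~\ref{def-rand-st}, a time $\tau \in \te^R(\ef^1_t)$ comes with a generating process $\rho \in \mcalAcirc(\ef^1_t)$ and the device $Z_\tau \sim U([0,1])$, independent of $\ef_T$, such that $\tau = \inf\{t \in [0,T] : \rho_t > Z_\tau\}$, and that here $\ef^1_t = \ef^p_t \vee \sigma(\mcalI)$ with $\mcalI$ taking finitely many values.

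The crucial observation is that $\sigma(\mcalI)$ is generated by the finite partition $\{\{\mcalI = i\}\}_{i=1}^{\I}$, so any $\ef^1_t$-measurable random variable factorises through the map $\Phi(\omega^p, \omega^s) := (\omega^p, \mcalI(\omega^s))$. By the Doob--Dynkin factorisation, for each fixed $t$ the variable $\rho_t$ is constant in $\omega^s$ on every atom $\{\mcalI = i\}$, and there is an $\ef^p_t$-measurable $\xi^i_t$ with $\rho_t = \xi^i_t$ on $\{\mcalI = i\}$; equivalently, $\rho_t = \sum_{i=1}^{\I} \ind{\{\mcalI = i\}}\,\xi^i_t$. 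I would make this rigorous by a standard functional monotone-class argument applied to $\ef^p_t \vee \sigma(\mcalI)$.

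Next I would verify that each $\xi^i$ belongs to $\mcalAcirc(\ef^p_t)$. Adaptedness to $(\ef^p_t)$ is immediate from the factorisation at each $t$. The pathwise requirements (\cadlag, non-decreasing, $\xi^i_{0-} = 0$, $\xi^i_T = 1$) transfer directly, because for any $\omega^p$ and any $\omega^s \in \{\mcalI = i\}$ the path $t \mapsto \xi^i_t(\omega^p)$ agrees with $t \mapsto \rho_t(\omega^p, \omega^s)$, which has these properties for every $\omega$ since $\rho \in \mcalAcirc(\ef^1_t)$. For the (possibly empty) scenarios with $\probs(\mcalI = i) = 0$ there is no atom, and I would simply set $\xi^i_t := \ind{\{t = T\}}$, which lies in $\mcalAcirc(\ef^p_t)$ and changes nothing.

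Finally, setting $\tau_i := \inf\{t \in [0,T] : \xi^i_t > Z_\tau\}$ yields a $(\ef^p_t)$-randomised stopping time: $Z_\tau \sim U([0,1])$ is independent of $\ef_T$ and hence an admissible device for the subfiltration $(\ef^p_t) \subseteq (\ef_t)$, with generating process $\xi^i$. On $\{\mcalI = i\}$ we have $\rho = \xi^i$, so $\tau = \tau_i$ there, giving $\tau = \sum_{i=1}^{\I} \ind{\{\mcalI = i\}}\,\tau_i$. The representation of $\sigma$ follows verbatim with $\mcalJ$, generating processes $\zeta^j$, and the device $Z_\sigma$. The one genuinely delicate point is the factorisation step: one must ensure that the splitting of $\rho$ can be carried out simultaneously for all $t \in [0,T]$, so that the $\xi^i$ are bona fide processes in $\mcalAcirc(\ef^p_t)$ rather than a mere family of random variables indexed by $t$. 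The product structure $\Omega = \Omega^p \times \oms$ together with the discreteness of $\mcalI$ makes this transparent, since for fixed $\omega^p$ and $t$ the quantity $\rho_t(\omega^p, \cdot)$ is literally constant on each atom.
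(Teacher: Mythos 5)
Your proposal is correct, but note that the paper itself contains no proof of this lemma: it is imported from \cite[Proposition 3.3]{esmaeeli2018}, so your argument supplies a self-contained proof where the paper merely cites. The route you take is the natural one and it works: the join $\ef^p_t \vee \sigma(\mcalI)$ is generated by sets of the form $\bigcup_{i=1}^{\I}\big(B_i \times \{\mcalI=i\}\big)$ with $B_i \in \ef^p_t$, hence any $\ef^1_t$-measurable variable is, for fixed $\omega^p$, constant in $\omega^s$ on each atom; defining $\xi^i_t(\omega^p) := \rho_t(\omega^p,\omega^s_i)$ by freezing a representative $\omega^s_i \in \{\mcalI=i\}$ then produces a genuine $(\ef^p_t)$-adapted process whose paths \emph{are} paths of $\rho$, so the \cadlag, monotonicity and boundary requirements of $\mcalAcirc(\ef^p_t)$ transfer pointwise, and $\tau=\tau_i$ on $\{\mcalI=i\}$ with the common device $Z_\tau$ is immediate. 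This section construction is precisely what resolves the ``simultaneity in $t$'' issue you flag: a Doob--Dynkin factorisation applied separately at each $t$ would still need to be glued into a process, whereas freezing $\omega^s$ does this for free (and makes the monotone-class appeal unnecessary). One point to tighten: under Assumption \ref{ass:filtration} the players' filtrations are augmented with $\prob$-null sets, so strictly speaking $\rho_t$ is only $\prob$\as equal to an $(\ef^p_t\vee\sigma(\mcalI))$-measurable variable; this is harmless because the paper's convention for $\mcalAcirc$ explicitly permits redefining generating processes on null sets (setting the process to $0$ on $[0,T)$ and to $1$ at $T$ there), which is also exactly how your treatment of atoms with $\probs(\mcalI=i)=0$ fits in.
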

\begin{cor}
Any $(\ef^1_t)$-stopping time $\tau$ has a decomposition \eqref{eqn:tau_decomposition} with $\tau_1,\ldots,\tau_\I$ being $(\ef^p_t)$-stopping times (and analogously for $(\ef^2_t)$-stopping times).
\end{cor}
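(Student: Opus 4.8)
The plan is to view a deterministic stopping time as a degenerate randomised stopping time and then invoke Lemma \ref{lem:tau_decomposition}. Let $\tau$ be an $(\ef^1_t)$-stopping time and put $\rho_t:=\ind{\{t\ge\tau\}}$. Then $\rho\in\mcalAcirc(\ef^1_t)$ and, since $Z_\tau<1$ $\prob$\as, the prescription \eqref{eq-def-rand-st} gives $\eta(\rho,Z_\tau)=\inf\{t:\rho_t>Z_\tau\}=\tau$ $\prob$\as, so that $\tau\in\te^R(\ef^1_t)$ with a generating process $\rho$ that is $\{0,1\}$-valued. Applying Lemma \ref{lem:tau_decomposition} yields the decomposition \eqref{eqn:tau_decomposition}, $\tau=\sum_{i=1}^\I\ind{\{\mcalI=i\}}\tau_i$, with $\tau_i\in\te^R(\ef^p_t)$ generated by some $\xi^i\in\mcalAcirc(\ef^p_t)$ and the common device $Z_\tau$. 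What remains is to show that, because $\rho$ is $\{0,1\}$-valued, each $\tau_i$ is in fact a deterministic $(\ef^p_t)$-stopping time.

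I would read this off the explicit form of the decomposition used in \cite[Proposition 3.3]{esmaeeli2018}, namely $\rho_t=\sum_{i=1}^\I\ind{\{\mcalI=i\}}\xi^i_t$. Since the events $\{\mcalI=i\}$ partition $\Omega$, on $\{\mcalI=i\}$ one has $\xi^i_t=\rho_t\in\{0,1\}$; because $\xi^i_t$ depends only on the $\Omega^p$-coordinate while the event $\{\mcalI=i\}$ is a cylinder of the form $\Omega^p\times C_i$ with $C_i\subseteq\oms$ non-empty (whenever $\probs(\mcalI=i)>0$), this forces $\xi^i$ to be $\{0,1\}$-valued. A process in $\mcalAcirc(\ef^p_t)$ with values in $\{0,1\}$ is necessarily $\xi^i_t=\ind{\{t\ge\sigma_i\}}$, where $\sigma_i:=\inf\{t\in[0,T]:\xi^i_t>0\}$ satisfies $\{\sigma_i\le t\}=\{\xi^i_t=1\}\in\ef^p_t$ by right-continuity and non-decrease of $\xi^i$; hence $\sigma_i$ is an $(\ef^p_t)$-stopping time. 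Finally, for $Z_\tau\in(0,1)$ one has $\xi^i_t>Z_\tau$ iff $\xi^i_t=1$ iff $t\ge\sigma_i$, so $\tau_i=\eta(\xi^i,Z_\tau)=\sigma_i$ $\prob$\as. Thus each $\tau_i$ is (a.s.\ equal to) an $(\ef^p_t)$-stopping time, and the same argument with $(\ef^2_t)$, $\mcalJ$ and $Z_\sigma$ in place of $(\ef^1_t)$, $\mcalI$ and $Z_\tau$ settles the claim for $\sigma$.

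The only genuinely delicate point is the claim that $\xi^i$ inherits being $\{0,1\}$-valued as an $(\ef^p_t)$-measurable process, and not merely on the product event $\{\mcalI=i\}$; this is the step I would argue most carefully, and it is the one that hinges on the finite, product structure of the information in this example. Should one wish to bypass the precise construction of \cite{esmaeeli2018}, the corollary also follows directly from the structure of the filtration: as $\sigma(\mcalI)$ is generated by the finite partition $\{\{\mcalI=i\}\}_{i=1}^\I$, every set of $\ef^1_t=\ef^p_t\vee\sigma(\mcalI)$ is of the form $\bigcup_{i=1}^\I(B_i\cap\{\mcalI=i\})$ with $B_i\in\ef^p_t$. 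Writing $\tau=F(\cdot,\mcalI)$ for a map $F$ that is $\ef^p_T$-measurable in its first argument and setting $\tau_i:=F(\cdot,i)$ gives \eqref{eqn:tau_decomposition}; comparing the two expressions for $\{\tau\le t\}\cap\{\mcalI=i\}$ and using $\{\mcalI=i\}\neq\emptyset$ identifies $\{\tau_i\le t\}\in\ef^p_t$ for every $t$, so $\tau_i$ is an $(\ef^p_t)$-stopping time. Indices with $\probs(\mcalI=i)=0$ are immaterial, since the corresponding term in \eqref{eqn:tau_decomposition} vanishes $\prob$\as and $\tau_i$ may be taken identically $0$.
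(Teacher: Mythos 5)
Your proof is correct. The paper in fact offers no proof of this corollary: it is stated as an immediate consequence of Lemma \ref{lem:tau_decomposition}, and your first argument is precisely the formalisation left implicit there --- embed the pure stopping time as a degenerate randomised one via the generating process $\rho_t=\ind{\{t\ge\tau\}}$, apply the lemma, and check that the $\{0,1\}$-valued structure is inherited by each $\xi^i$, forcing each $\tau_i$ to be pure. The one delicate step you flag, namely the identity $\rho_t=\sum_{i}\ind{\{\mcalI=i\}}\xi^i_t$ imported from \cite{esmaeeli2018}, can actually be derived inside the paper itself: by Lemma \ref{lem-eta-xi}, any generating process satisfies $\rho_t=\ee[\ind{\{\tau\le t\}}|\ef_T]$ $\prob$\as, so multiplying by the $\ef_T$-measurable indicator $\ind{\{\mcalI=i\}}$ and using that $\tau=\tau_i$ on $\{\mcalI=i\}$ gives the identity up to indistinguishability; your conclusion that $\xi^i$ is $\{0,1\}$-valued then holds for $\prob^p$\ae $\omega^p$ rather than for every $\omega^p$, which suffices because the decomposition \eqref{eqn:tau_decomposition} is itself only asserted $\prob$\as (and, under \ref{ass:filtration}, $\xi^i$ may be modified on a null set while remaining adapted). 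Your second, filtration-theoretic argument --- using that every set of $\ef^p_t\vee\sigma(\mcalI)$ has the form $\cup_{i}(B_i\cap\{\mcalI=i\})$ with $B_i\in\ef^p_t$, together with a Doob--Dynkin-type representation $\tau=F(\cdot,\mcalI)$ --- is a genuinely self-contained alternative: it bypasses both Lemma \ref{lem:tau_decomposition} and the external reference, and has the advantage of not invoking randomisation at all for a statement that concerns only pure stopping times.
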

Hence, given a realisation of the idiosyncratic scenario variable $\mcalI$ (resp.\ $\mcalJ$), the first (second) player chooses a randomised stopping time whose generating process is adapted to the common filtration $(\ef^p_t)$. The resulting expected payoff can be written as
\begin{equation*}
N(\tau, \sigma) = \sum_{i=1}^\I \sum_{j=1}^\J \pi_{i,j} \ee \Big[ f^{i,j}_{\tau_i} \ind{\{\tau_i<\sigma_j\}}+
g^{i,j}_{\sigma_j} \ind{\{{\sigma_j}<{\tau_i}\}}+ h^{i,j}_{\tau_i} \ind{\{\tau_i = \sigma_j\}} \Big].
\end{equation*}

\subsection{Game with a single partially observed dynamics} \label{subsec:game_2}
Our second example generalises the set-ups of \cite{DGV2017} and \cite{DEG2020} and reduces to those cases when $J=2$, the time horizon is infinite and the payoff processes are (particular) time-homogeneous functions of a (particular) one-dimensional diffusion. Here the underlying dynamics of the game is a diffusion, whose drift depends on the realisation of an independent random variable $\mcalJ\in\{1,\ldots, J\}$. Formally, on a probability space $(\Omega, \ef, \prob)$ we have a Brownian motion $(W_t)$ on $\er^d$, an independent random variable $\mcalJ\in\{1,\ldots, J\}$ with distribution $\pi_j=\prob(\mcalJ=j)$, and a process $(X_t)$ on $\er^d$ with the dynamics
\[
dX_t=\sum_{j=1}^J \ind{\{\mcalJ=j\}} \mu_j(X_t)dt+\sigma(X_t)dW_t,\quad X_0=x,
\]
where $\sigma$, $(\mu_j)_{j=1,\ldots J}$ are given functions (known to both players) that guarantee existence of a unique strong solution of the SDE for each $j=1,\ldots J$. The payoff processes are deterministic functions of the underlying process, i.e., $f_t=f(t,X_t)$, $g_t=g(t,X_t)$ and $h_t=h(t,X_t)$, and they are known to both players. We assume that the payoff processes satisfy conditions \ref{eq-integrability-cond}-\ref{eq-terminal-time-order-cond}. It is worth to remark that in the specific setting of \cite{DGV2017} the norms $\| f \|_{\mcalL}$ and $\| g \|_{\mcalL}$ are not finite so that our results cannot be directly applied. However, the overall structure of the game in \cite{DGV2017} is easier than ours so that some other special features of the payoff processes can be used to determine existence of the value therein.

To draw a precise parallel with the notation from Section \ref{sec:setting}, here we take $\ef_t=\ef^W_t\vee\sigma(\mcalJ)$, where $(\ef^W_t)$ is the filtration generated by the Brownian sample paths and augmented with $\prob$-null sets. Both players observe the dynamics of $X$, however they have partial/asymmetric information on the value of $\mcalJ$. In \cite{DGV2017} neither of the two players knows the true value of $\mcalJ$, so we have $(\ef^1_t)=(\ef^2_t)=(\ef^X_t)$, where $(\ef^X_t)$ is generated by the sample paths of the process $X$ and it is augmented by the $\prob$-null sets (notice that $\ef^X_t\subsetneq \ef_t$). In \cite{DEG2020} instead, the first player (minimiser) observes the true value of $\mcalJ$. In that case $(\ef^1_t)=(\ef_t)$ and $(\ef^2_t)=(\ef^X_t)$, so that $\ef^2_t\subsetneq \ef^1_t$. Using the notation $X^\mcalJ$ to emphasise the dependence of the underlying dynamics on $\mcalJ$, and given a choice of random times $\tau\in \te^R(\ef^1_t)$ and $\sigma \in \te^R(\ef^2_t)$ for the first and the second player, the game's payoff reads
\begin{equation*}
\mcal{P} (\tau, \sigma) = f(\tau,X^\mcalJ_\tau) \ind{\{\tau<\sigma\}} 
+
g (\sigma,X^\mcalJ_\sigma) \ind{\{{\sigma}<{\tau}\}}
+
h (\tau, X^\mcalJ_\tau) \ind{\{\tau = \sigma\}}.
\end{equation*}
Players assess the game by looking at the expected payoff as in \eqref{eq-uninf-payoff}. Finally, we remark that under a number of (restrictive) technical assumptions and with infinite horizon \cite{DGV2017} and \cite{DEG2020} show the existence of a value and of a saddle point in a smaller class of strategies. In \cite{DGV2017} both players use $(\ef^X_t)$-stopping times, with no need for additional randomisation. In \cite{DEG2020} the uninformed player uses $(\ef^X_t)$-stopping times but the informed player uses $(\ef_t)$-randomised stopping times.

\subsection{Game with two partially observed dynamics}
Here we show how the setting of \cite{GenGrun2019} also fits in our framework. This example is conceptually different from the previous two because the players observe two different stochastic processes. On a probability space $(\Omega,\ef,\prob)$ two processes $(X_t)$ and $(Y_t)$ are defined (in \cite{GenGrun2019} these are finite-state continuous-time Markov chains). The first player only observes the process $(X_t)$ while the second player only observes the process $(Y_t)$. In the notation of Section \ref{sec:setting}, we have $(\ef^1_t)=(\ef^X_t)$, $(\ef^2_t)=(\ef^Y_t)$ and $(\ef_t)=(\ef^X_t\vee\ef^Y_t)$, where the filtration $(\ef^X_t)$ is generated by the sample paths of $(X_t)$ and $(\ef^Y_t)$ by those of $(Y_t)$ (both filtrations are augmented with $\prob$-null sets). The payoff processes are deterministic functions of the underlying dynamics, i.e., $f_t=f(t,X_t,Y_t)$, $g_t=g(t,X_t,Y_t)$ and $h_t=h(t, X_t,Y_t)$, and they satisfy conditions \ref{eq-integrability-cond}-\ref{eq-terminal-time-order-cond}. Given a choice of random times $\tau \in \te^R(\ef^1_t)$ and $\sigma\in \te^R(\ef^2_t)$ for the first and the second player, the game's payoff reads
\begin{equation*}
\mcal{P} (\tau, \sigma) = f(\tau,X_\tau,Y_\tau) \ind{\{\tau<\sigma\}} 
+
g (\sigma,X_\sigma,Y_\sigma) \ind{\{{\sigma}<{\tau}\}}
+
h (\tau, X_\tau,,Y_\tau) \ind{\{\tau = \sigma\}}.
\end{equation*}
Players assess the game by looking at the expected payoff as in \eqref{eq-uninf-payoff}. We remark that the proof of existence of the value in \cite{GenGrun2019} is based on variational inequalities and relies on the finiteness of the state spaces of both underlying processes, and therefore cannot be extended to our general non-Markovian framework.

\subsection{Game with a random horizon}
Here we consider a non-Markovian extension of the framework of \cite{lempa2013}, where the time horizon of the game is exponentially distributed and independent of the payoff processes. On a probability space $(\Omega,\ef,\prob)$ we have a filtration $(\mcalG_t)_{t\in[0,T]}$, augmented with $\prob$-null sets, and a positive random variable $\theta$ which is independent of $\mcalG_T$ and has a continuous distribution. Let $\Lambda_t:=\ind{\{t\ge \theta\}}$ and take $\ef_t=\mcalG_t\vee\sigma(\Lambda_s,\,0\le s\le t)$.

The players have asymmetric knowledge of the random variable $\theta$. The first player observes the occurrence of $\theta$, whereas the second player does not. We have $(\ef^1_t)=(\ef_t)$ and $(\ef^2_t)=(\mcalG_t)\subsetneq (\ef^1_t)$.
Given a choice of random times $\tau \in \te^R(\ef^1_t)$ and $\sigma\in \te^R(\ef^2_t)$ for the first and the second player, the game's payoff reads
\begin{align}\label{eq:PLM}
\mcal{P} (\tau, \sigma) 
&= 
\indd{\tau \wedge \sigma \les \theta} \big(f^0_\tau \ind{\{\tau<\sigma\}} 
+
g^0_\sigma \ind{\{{\sigma}<{\tau}\}}
+
h^0_\tau \ind{\{\tau = \sigma\}} \big),
\end{align}
where $f^0$, $g^0$ and $h^0$ are $(\mcalG_t)$-adapted processes that satisfy conditions \ref{eq-integrability-cond}-\ref{eq-terminal-time-order-cond} and $f^0 \ge 0$.

Notice that the problem above does not fit directly into the framework of Section \ref{sec:setting}: Assumption \ref{eq-integrability-cond} is indeed violated, because the processes $(\indd{t \le \theta} f^0_t),(\indd{t \le \theta}g^0_t)$ are not \cadlag. However, we now show that the game can be equivalently formulated as a game satisfying conditions of our framework. The expected payoff can be rewritten as follows
\begin{align*}
N^0(\tau, \sigma) := \ee\big[\mcal{P} (\tau, \sigma) \big]
&= 
\ee\big[\indd{\tau \le \theta} \ind{\{\tau<\sigma\}} f^0_\tau 
+
\indd{\sigma \le \theta} \ind{\{{\sigma}<{\tau}\}} g^0_\sigma 
+
\indd{\sigma \le \theta} \ind{\{\tau = \sigma\}} h^0_\tau\big]\\
&= 
\ee\big[\indd{\tau \le \theta} \ind{\{\tau<\sigma\}} f^0_\tau 
+
\indd{\sigma < \theta} \ind{\{{\sigma}<{\tau}\}} g^0_\sigma 
+
\indd{\sigma < \theta} \ind{\{\tau = \sigma\}} h^0_\tau\big],\notag
\end{align*}
where the second equality holds because $\theta$ is continuously distributed and independent of $\ef^2_T$, so $\prob(\sigma=\theta) = 0$ for any $\sigma \in \te^R(\ef^2_t)$. Fix $\eps > 0$ and set
\begin{align*}
f^\eps_t:=f^0_{t}\ind{\{t<\theta+\eps\}},\quad g_t:=g^0_t\ind{\{t < \theta\}}, \quad h_t:=h^0_t\ind{\{t < \theta\}}, \qquad t \in [0, T].
\end{align*}
We see that conditions \ref{eq-integrability-cond}, \ref{eq-order-cond}, \ref{eq-terminal-time-order-cond} hold for the processes $(f^\eps_t)$, $(g_t)$, $(h_t)$ (for condition \ref{eq-order-cond} we use that $f^0 \ge 0$). Condition \ref{ass:regular} (regularity of payoffs $f^\eps$ and $g$) is satisfied, because $\theta$ has a continuous distribution, so it is a totally inaccessible stopping time for the filtration $(\ef_t)$ by \cite[Example VI.14.4]{RogersWilliams}. Therefore, by Theorem \ref{thm:main}, the game with expected payoff
\[
N^\eps(\tau, \sigma) =\ee\big[\mcal{P}^\eps(\tau,\sigma)\big]:= \ee \big[\ind{\{\tau<\sigma\}} f^\eps_\tau 
+
\ind{\{{\sigma}<{\tau}\}} g_\sigma 
+
\ind{\{\tau = \sigma\}} h_\tau\big]
\]
has a value and a pair of optimal strategies exists.

We now show that the game with expected payoff $N^0$ has the same value as the one with expected payoff $N^\eps$, for any $\eps > 0$. First observe that
\begin{align*}
N^\eps(\tau, \sigma) - N^0(\tau, \sigma) = \ee\big[\indd{\tau < \sigma} \indd{\theta < \tau < \theta + \eps} f^0_\tau\big] \ge 0
\end{align*}
by the assumption that $f^0 \ge 0$. Hence, 
\begin{equation}\label{eqn:N_eps_upper}
\inf_{\tau \in \te^R(\ef^1_t)} \sup_{\sigma \in \te^R(\ef^2_t)} N^\eps (\tau, \sigma)
\ge
\inf_{\tau \in \te^R(\ef^1_t)} \sup_{\sigma \in \te^R(\ef^2_t)} N^0 (\tau, \sigma).
\end{equation}
To derive an opposite inequality for the lower values, fix $\sigma \in \te^R(\ef^2_t)$. For $\tau \in \te^R(\ef^1_t)$, define
\[
\hat \tau =
\begin{cases}
\tau, & \tau \le \theta,\\
T, & \tau > \theta.
\end{cases}
\]
Then, using that $\mcal{P}^\eps(\tau,\sigma)=\mcal{P}(\tau,\sigma)$ on $\{\tau\le \theta\}$ and $\mcal{P}^\eps(T,\sigma)=g^0_\sigma\ind{\{\sigma<\theta\}}=\mcal{P}(\tau,\sigma)$ on $\{\tau>\theta\}$, we have
$N^\eps(\hat \tau, \sigma) = N^0(\tau, \sigma)$. It then follows that
\[
\inf_{\tau \in \te^R(\ef^1_t)} N^\eps (\tau, \sigma) \le \inf_{\tau \in \te^R(\ef^1_t)} N^0 (\tau, \sigma),
\]
which implies
\begin{equation}\label{eqn:N_eps_lower}
\sup_{\sigma \in \te^R(\ef^2_t)} \inf_{\tau \in \te^R(\ef^1_t)} N^\eps (\tau, \sigma) \le 
\sup_{\sigma \in \te^R(\ef^2_t)} \inf_{\tau \in \te^R(\ef^1_t)} N^0 (\tau, \sigma).
\end{equation}

Since the value of the game with expected payoff $N^\eps$ exists, combining \eqref{eqn:N_eps_upper} and \eqref{eqn:N_eps_lower} we see that the value of the game with expected payoff $N^0$ also exists. It should be noted, though, that this does not imply that an optimal pair of strategies for $N^\eps$ is optimal for $N^0$. 

It is worth noticing that in \cite{lempa2013} the setting is Markovian with $T=\infty$, $f^0_t=h^0_t=e^{-rt} \bar f(X_t)$, $g^0_t=e^{-rt} \bar g(X_t)$, $\bar f$, $\bar g$ deterministic functions, $r\ge 0$, $\theta$ exponentially distributed and $(X_t)$ a one-dimensional linear diffusion. Under specific technical requirements on the functions $\bar f$ and $\bar g$ the authors find that a pair of optimal strategies for the game \eqref{eq:PLM} exists when the first player uses $(\ef^1_t)$-stopping times and the second player uses $(\ef^2_t)$-stopping times (in the form of hitting times to thresholds), with no need for randomisation. Their methods rely on the theory of one-dimensional linear diffusions (using scale function and speed measure) and free-boundary problems, hence do not admit an extension to a non-Markovian case.

\section{Reformulation as a game of (singular) controls} \label{sec:reform}

In order to integrate out the randomisation devices for $\tau$ and $\sigma$ and obtain a reformulation of the payoff functional $N(\tau, \sigma)$ in terms of generating processes for randomised stopping times $\tau$ and $\sigma$, we need the following two auxiliary lemmata. We remark that if $\eta$ is a $(\mcalG_t)$-randomised stopping time for $(\mcalG_t) \subseteq (\ef_t)$, then $\eta$ is also an $(\ef_t)$-randomised stopping time. Therefore, the results below are formulated for $(\ef_t)$-randomises stopping times.

\begin{Lemma}\label{lem-eta-xi}
Let $\eta\in\te^R(\ef_t)$ with the generating process $(\rho_t)$. Then, for any $\ef_T$-measurable random variable $\kappa$ with values in $[0,T]$, 
\begin{alignat}{3}
&\ee[\ind{\{\eta\les \kappa\}}|\ef_T]=\rho_\kappa, \qquad &&\ee[\ind{\{\eta>\kappa\}}|\ef_T]=1-\rho_\kappa, \label{eq-xi-eta-1}\\  
&\ee[\ind{\{\eta<\kappa\}}|\ef_T]=\rho_{\kappa_-},\qquad &&\ee[\ind{\{\eta\ges \kappa\}}|\ef_T]=1-\rho_{\kappa_-}. \label{eq-xi-eta-3} 
\end{alignat}
\end{Lemma}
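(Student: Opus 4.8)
The plan is to condition on $\ef_T$ and exploit that the randomisation device $Z$ is uniform on $[0,1]$ and independent of $\ef_T$. Write $\eta=\inf\{t\in[0,T]:\rho_t>Z\}$ as in Definition \ref{def-rand-st}. Since $(\rho_t)$ is $(\ef_t)$-adapted and \cadlag, both $\rho_\kappa$ and the left limit $\rho_{\kappa_-}$ are $\ef_T$-measurable (with the convention $\rho_{0-}=0$) whenever $\kappa$ is an $\ef_T$-measurable $[0,T]$-valued random variable, and they take values in $[0,1]$ because $\rho$ is non-decreasing with $\rho_{0-}=0$ and $\rho_T=1$. Thus, conditionally on $\ef_T$, the whole path of $\rho$ and the value $\kappa$ may be treated as frozen, and the only remaining randomness is $Z\sim U([0,1])$.

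The core of the argument is a pair of pathwise set relations obtained from the monotonicity and right-continuity of $\rho$. First I would establish $\{\eta<\kappa\}=\{Z<\rho_{\kappa_-}\}$: if $\eta<\kappa$ there is $t<\kappa$ with $\rho_t>Z$, whence $\rho_{\kappa_-}=\sup_{s<\kappa}\rho_s\ge\rho_t>Z$; conversely $Z<\sup_{s<\kappa}\rho_s$ forces some $t<\kappa$ with $\rho_t>Z$, so $\eta\le t<\kappa$. Next I would prove the squeeze $\{Z<\rho_\kappa\}\subseteq\{\eta\le\kappa\}\subseteq\{Z\le\rho_\kappa\}$. The left inclusion holds because $Z<\rho_\kappa$ means $\kappa\in\{t:\rho_t>Z\}$, so $\eta\le\kappa$. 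For the right inclusion, note that $\{t:\rho_t>Z\}$ is an up-set, so $(\eta,T]\subseteq\{t:\rho_t>Z\}$ and right-continuity gives $\rho_\eta=\lim_{s\downarrow\eta}\rho_s\ge Z$; then monotonicity yields $Z\le\rho_\eta\le\rho_\kappa$ on $\{\eta\le\kappa\}$.

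It then remains to take conditional expectations. Using independence of $Z$ and $\ef_T$ together with the uniform law, for any $\ef_T$-measurable $a\in[0,1]$ one has $\prob(Z<a\mid\ef_T)=\prob(Z\le a\mid\ef_T)=a$, the two coinciding because $Z$ is continuously distributed. Applying this with $a=\rho_{\kappa_-}$ to $\{\eta<\kappa\}=\{Z<\rho_{\kappa_-}\}$ yields $\ee[\ind{\{\eta<\kappa\}}\mid\ef_T]=\rho_{\kappa_-}$, the first identity in \eqref{eq-xi-eta-3}. For \eqref{eq-xi-eta-1}, the pointwise indicator inequalities $\ind{\{Z<\rho_\kappa\}}\le\ind{\{\eta\le\kappa\}}\le\ind{\{Z\le\rho_\kappa\}}$ combined with monotonicity of conditional expectation sandwich $\ee[\ind{\{\eta\le\kappa\}}\mid\ef_T]$ between $\rho_\kappa$ and $\rho_\kappa$, forcing equality. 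The two remaining identities follow by complementation, since $\ind{\{\eta>\kappa\}}=1-\ind{\{\eta\le\kappa\}}$ and $\ind{\{\eta\ge\kappa\}}=1-\ind{\{\eta<\kappa\}}$.

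I expect the only delicate point to be the boundary behaviour at $t=\eta$ and $t=\kappa$: the infimum defining $\eta$ need not be attained with strict inequality, so $\{\eta\le\kappa\}$ is genuinely sandwiched between $\{Z<\rho_\kappa\}$ and $\{Z\le\rho_\kappa\}$ rather than equal to either, and it is the appearance of the left limit $\rho_{\kappa_-}$ (versus $\rho_\kappa$) that must be tracked carefully in the strict-inequality events. This is exactly what the continuity of the uniform device $Z$ resolves, making $\{Z=\rho_\kappa\}$ a $\prob(\cdot\mid\ef_T)$-null event so that the squeeze closes.
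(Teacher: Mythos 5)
Your proof is correct. For \eqref{eq-xi-eta-1} it takes essentially the paper's route: both arguments establish the pathwise sandwich $\{Z<\rho_\kappa\}\subseteq\{\eta\le\kappa\}\subseteq\{Z\le\rho_\kappa\}$ and close it by conditioning on $\ef_T$, using that $Z$ is uniform and independent of $\ef_T$ while $\rho_\kappa$ is $\ef_T$-measurable. Where you genuinely diverge is \eqref{eq-xi-eta-3}: the paper deduces it from \eqref{eq-xi-eta-1} by applying the first identity at the approximating times $(\kappa-\eps)\vee(\kappa/2)$ and letting $\eps\downarrow 0$ with dominated convergence (the truncation $\vee\,(\kappa/2)$ keeping the argument inside $[0,T]$, and $\rho_{0-}=0$ handling the event $\{\kappa=0\}$), whereas you prove the exact pathwise identity $\{\eta<\kappa\}=\{Z<\rho_{\kappa-}\}$ — exploiting that $\rho_{\kappa-}=\sup_{s<\kappa}\rho_s$ for a non-decreasing path — and then condition once. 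Your route is more elementary and in a sense sharper: it yields an equality of events for every $\omega$ rather than a sandwich closed only in the limit, it avoids the approximation and the convergence theorem entirely, and it makes transparent why the left limit $\rho_{\kappa-}$ (rather than $\rho_\kappa$) appears in the strict-inequality events. The paper's route buys economy instead: it recycles \eqref{eq-xi-eta-1} verbatim and needs no further pathwise analysis of $\{\eta<\kappa\}$. One edge case you use implicitly and should state: in the right inclusion of the sandwich, if $\eta=T\le\kappa$ the right-limit argument at $\eta$ is vacuous, but there $\rho_\kappa=\rho_T=1\ge Z$ almost surely, so the inclusion still holds; this is a triviality, not a gap.
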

\begin{proof}
The proof of \eqref{eq-xi-eta-1} follows the lines of \cite[Proposition 3.1]{DEG2020}. Let $Z$ be the randomisation device for $\eta$. Since $\rho$ is right-continuous, non-decreasing and (\ref{eq-def-rand-st}) holds, we have
\begin{equation*}
\{\rho_\kappa > Z\}\subseteq \{\eta\les \kappa\}\subseteq\{\rho_\kappa\ges Z\}.
\end{equation*}
Using that $\rho_\kappa$ is $\ef_T$-measurable, and $Z$ is uniformly distributed and independent of $\ef_T$, we compute
\begin{equation*}
\ee[\ind{\{\eta\les \kappa\}}|\ef_T]\ges \ee[\ind{\{\rho_\kappa> Z\}}|\ef_T] = \int_0^1 \ind{\{\rho_\kappa> y\}} dy = \rho_\kappa,
\end{equation*}
and
\begin{equation*}
\ee[\ind{\{\eta\les \kappa\}}|\ef_T]\les \ee[\ind{\{\rho_\kappa\ges Z\}}|\ef_T] = \int_0^1 \ind{\{\rho_\kappa\ges y\}} dy = \rho_\kappa.
\end{equation*}
This completes the proof of the first equality in \eqref{eq-xi-eta-1}. The other one is a direct consequence.

To prove $(\ref{eq-xi-eta-3})$, we observe that, by (\ref{eq-xi-eta-1}), for any $\eps>0$ we have
\[
\ind{\{\kappa>0\}}\ee[\ind{\{\eta\les (\kappa-\eps) \vee (\kappa/2)\}}|\ef_T]=\ind{\{\kappa>0\}} \rho_{(\kappa-\eps) \vee (\kappa/2)}.
\]
Dominated convergence theorem implies 
\begin{align*}
\ee[\ind{\{\eta< \kappa\}}|\ef_T] &= \ind{\{\kappa > 0\}}\, \ee[\ind{\{\eta< \kappa\}}|\ef_T] 
= \lim_{\eps\downarrow 0} \ind{\{\kappa>0\}}\,\ee[\ind{\{\eta\les (\kappa-\eps) \vee (\kappa/2)\}}|\ef_T]\\
&= \lim_{\eps\downarrow 0} \ind{\{\kappa>0\}}\, \rho_{(\kappa-\eps) \vee (\kappa/2)} = \ind{\{\kappa>0\}}\, \rho_{\kappa-} = \rho_{\kappa-},
\end{align*}
where in the last equality we used that $\rho_{0-}=0$. This proves the first equality in \eqref{eq-xi-eta-3}. The other one is a direct consequence.
\end{proof}

\begin{Lemma}\label{lem:integ_out}
Let $\eta,\theta\in\te^R(\ef_t)$ with generating processes $(\rho_t)$, $(\chi_t)$ and independent randomisation devices $Z_\eta$, $Z_\theta$. For $(X_t)$ measurable, adapted and such that $\|X\|_{\mcalL}<\infty$ (but not necessarily {\cadlag\!\!}),
we have
\begin{align*}
&\ee\left[X_\eta \ind{\{\eta\les\theta\}\cap\{\eta<T\}}\right]=\ee\left[\int_{[0, T)} X_t(1-\chi_{t-})d\rho_t\right],\\
&\ee\left[X_\eta \ind{\{\eta<\theta\}}\right]=\ee\left[\int_{[0, T)} X_t(1-\chi_{t})d\rho_t\right],
\end{align*}
where we use the notation $\int_{[0, T)}$ for the (pathwise) Lebesgue-Stieltjes integral.
\label{lem-alt-repr-both-rand}
\end{Lemma}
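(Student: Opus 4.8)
The plan is to reduce both identities to the elementary conditional-expectation formulas of Lemma \ref{lem-eta-xi} by conditioning on $\ef_T$ and integrating out the two independent randomisation devices. The key observation is that the process $(X_t)$ is $\ef_T$-measurable as a whole (being measurable and adapted), and so is the generating pair $(\rho_t),(\chi_t)$, whereas the only genuine randomness left once we condition on $\ef_T$ comes from $Z_\eta$ and $Z_\theta$. Since $\eta=\eta(\rho,Z_\eta)$ and $\theta=\theta(\chi,Z_\theta)$ with $Z_\eta,Z_\theta$ independent of each other and of $\ef_T$, the event $\{\eta\le\theta\}$ (resp. $\{\eta<\theta\}$) can be handled by iterated conditioning.

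First I would prove the second identity, which is slightly cleaner. Writing the expectation as $\ee\big[\ee[X_\eta\ind{\{\eta<\theta\}}\,|\,\ef_T]\big]$, I would like to pass the integral over the law of $\eta$ outside. Because $X$ is $\ef_T$-measurable, $X_\eta=\int_{[0,T]} X_t\,d\ind{\{\eta\le t\}}$ pathwise, and on conditioning on $\ef_T$ the (sub-probability) ``distribution function'' of $\eta$ on $[0,T)$ is exactly $t\mapsto \rho_t$ by the first equality in \eqref{eq-xi-eta-1}. The precise route is: first condition on $\ef_T\vee\sigma(Z_\eta)$, under which $\eta$ is determined and $\theta$ is still randomised through $Z_\theta$; applying Lemma \ref{lem-eta-xi} to $\theta$ with the (now $\ef_T\vee\sigma(Z_\eta)$-measurable) time $\kappa=\eta$ gives $\ee[\ind{\{\eta<\theta\}}\,|\,\ef_T\vee\sigma(Z_\eta)]=\ee[\ind{\{\theta>\eta\}}\,|\,\ef_T\vee\sigma(Z_\eta)]=1-\chi_{\eta}$ (here I use that $\{\theta>\eta\}$ corresponds to the second equality in \eqref{eq-xi-eta-1} applied to $\theta$, together with the independence of $Z_\theta$ from $\ef_T\vee\sigma(Z_\eta)$). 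Then I would take conditional expectation given $\ef_T$ and integrate out $Z_\eta$: the map $t\mapsto X_t(1-\chi_t)$ is $\ef_T$-measurable, and $\ee[X_\eta(1-\chi_\eta)\ind{\{\eta<T\}}\,|\,\ef_T]=\int_{[0,T)} X_t(1-\chi_t)\,d\rho_t$, the Lebesgue--Stieltjes integral against the conditional law $d\rho_t$ of $\eta$ on $[0,T)$. Taking outer expectation yields the claim (the contribution of $\{\eta=T\}$ vanishes since $\{\eta<\theta\}\subseteq\{\eta<T\}$).

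The first identity follows the same scheme but with the roles of open and closed events adjusted: on the set $\{\eta\le\theta\}\cap\{\eta<T\}$ I would condition on $\ef_T\vee\sigma(Z_\eta)$ and apply the fourth equality in \eqref{eq-xi-eta-3} to $\theta$ with $\kappa=\eta$, obtaining $\ee[\ind{\{\theta\ge\eta\}}\,|\,\ef_T\vee\sigma(Z_\eta)]=1-\chi_{\eta-}$; the left-limit $\chi_{\eta-}$ appears precisely because the inequality is non-strict in $\theta$, which is why the first formula carries $\chi_{t-}$ rather than $\chi_{t}$. Integrating out $Z_\eta$ against $d\rho_t$ on $[0,T)$ then gives $\ee\big[\int_{[0,T)} X_t(1-\chi_{t-})\,d\rho_t\big]$.

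The main obstacle I anticipate is the careful measure-theoretic justification of ``integrating out $Z_\eta$ against $d\rho_t$'' when $X$ is merely measurable rather than \cadlag, i.e.\ identifying $\ee[\varphi_\eta\ind{\{\eta<T\}}\,|\,\ef_T]$ with $\int_{[0,T)}\varphi_t\,d\rho_t$ for an $\ef_T$-measurable process $\varphi$. This is a disintegration/Fubini argument: conditionally on $\ef_T$, the law of $\eta$ is the Stieltjes measure $d\rho$ on $[0,T]$ (with an atom of size $1-\rho_{T-}$ at $T$, which is excluded by the indicator $\ind{\{\eta<T\}}$), and one must verify joint measurability in $(\omega,Z_\eta)$ to apply Fubini — the hypothesis $\|X\|_{\mcalL}<\infty$ provides the integrability needed to make all the interchanges of expectation and integration legitimate. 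Once this representation $\ee[\varphi_\eta\ind{\{\eta<T\}}\,|\,\ef_T]=\int_{[0,T)}\varphi_t\,d\rho_t$ is established cleanly, both identities fall out by taking $\varphi_t=X_t(1-\chi_{t-})$ and $\varphi_t=X_t(1-\chi_t)$ respectively.
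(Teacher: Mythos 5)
Your proposal is correct and follows essentially the same route as the paper: both proofs integrate out $Z_\theta$ via Lemma \ref{lem-eta-xi} and then convert the integration over $Z_\eta$ into a Lebesgue--Stieltjes integral against $d\rho$ using the generalized-inverse (quantile) change of variables of \cite[Proposition 0.4.9]{revuzyor}. The only difference is bookkeeping: the paper first writes $\eta=q(Z_\eta)$ and applies Fubini's theorem, so that Lemma \ref{lem-eta-xi} is invoked with the $\ef_T$-measurable times $\kappa=q(y)$ exactly as stated, whereas you condition on $\ef_T\vee\sigma(Z_\eta)$ and take $\kappa=\eta$, which requires the (easily checked, as you note) extension of that lemma to the enlarged $\sigma$-algebra, valid because $Z_\theta$ is independent of $\ef_T\vee\sigma(Z_\eta)$.
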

\begin{proof}

For $y\in[0,1)$, define a family of random variables
\begin{equation*}
q(y)=\inf\{t\in [0, T]: \rho_t > y\}.
\end{equation*}
Then, $\eta=q(Z_\eta)$. Using that $Z_\eta \sim U(0,1)$ and Fubini's theorem, we see that
\begin{align*}
\ee\left[X_\eta \ind{\{\eta\les\theta\}\cap\{\eta<T\}}\right]&=\ee\left[\int_0^1 X_{q(y)} \ind{\{q(y)\les\theta\}\cap\{q(y)<T\}} dy\right]\\
&=\int_0^1 \ee\left[\ee\left[X_{q(y)} \ind{\{q(y)\les\theta\}\cap\{q(y)<T\}}|\ef_T\right]\right]dy.
\end{align*}
Since $X_{q(y)} \ind{\{q(y)<T\}}$ is $\ef_T$-measurable and the randomization device $Z_\theta$ is independent of $\ef_T$, we continue as follows:
\begin{align*}
\int_0^1 \ee\left[\ee\left[X_{q(y)} \ind{\{q(y)\les\theta\}\cap\{q(y)<T\}}|\ef_T\right]\right]dy&=\int_0^1 \ee\left[X_{q(y)} \ind{\{q(y)<T\}} \ee[\ind{\{q(y)\les\theta\}} | \ef_T]\right] dy\\
&=\ee\left[\int_0^1 X_{q(y)} \ind{\{q(y)<T\}} (1-\chi_{q(y)-}) dy\right]\\
&=\ee\left[\int_{[0, T)} X_t(1-\chi_{t-})d\rho_t\right],
\end{align*}
where in the second equality we apply Lemma \ref{lem-eta-xi} with $\kappa = q(y)$ and in the third equality we change the variable of integration applying \cite[Proposition 0.4.9]{revuzyor} $\omega$-wise and using the fact that the function $y \mapsto q(y)(\omega)$ is the generalized inverse of $t\mapsto \rho_t(\omega)$. The first statement of the lemma is now proved.

For the second statement, we adapt the arguments above to write
\begin{align*}
\ee\left[X_\eta \ind{\{\eta<\theta\}}\right] &= \int_0^1 \ee\left[X_{q(y)} \ee[\ind{\{q(y)<\theta\}} | \ef_T]\right] dy
=\ee\left[\int_0^1 X_{q(y)} (1-\chi_{q(y)}) dy\right]\\
&=\ee\left[\int_{[0, T]} X_t(1-\chi_{t})d\rho_t\right]
=\ee\left[\int_{[0, T)} X_t(1-\chi_{t})d\rho_t\right],
\end{align*}
where in the last equality we used that $\chi_T = 1$. 
\end{proof}

\begin{cor}\label{cor:j}
Under the assumptions of Lemma \ref{lem:integ_out}, we have
\[
\ee[X_\eta\ind{\{\eta=\theta\}}]=\ee\bigg[\sum_{t\in[0,T]}X_t\Delta\rho_t\Delta\chi_t\bigg],
\]
where $\Delta \rho_t = \rho_t - \rho_{t-}$ and $\Delta \chi_t = \chi_t - \chi_{t-}$.
\end{cor}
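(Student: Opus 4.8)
The plan is to derive the identity by \emph{subtracting} the two representations supplied by Lemma \ref{lem:integ_out} and then separately accounting for the atom at the terminal time $T$. First I would observe that $\{\eta<\theta\}\subseteq\{\eta<T\}$, since $\theta\le T$ forces $\eta<\theta\le T$; hence $\ind{\{\eta<\theta\}}=\ind{\{\eta<\theta\}\cap\{\eta<T\}}$. Subtracting the second identity of Lemma \ref{lem:integ_out} from the first then gives, on the left-hand side, $\ee[X_\eta(\ind{\{\eta\le\theta\}}-\ind{\{\eta<\theta\}})\ind{\{\eta<T\}}]=\ee[X_\eta\ind{\{\eta=\theta\}\cap\{\eta<T\}}]$, while on the right-hand side the two Lebesgue--Stieltjes integrals combine into $\ee[\int_{[0,T)}X_t(\chi_t-\chi_{t-})\,d\rho_t]=\ee[\int_{[0,T)}X_t\Delta\chi_t\,d\rho_t]$.

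Next I would identify this integral with the sum $\sum_{t\in[0,T)}X_t\Delta\rho_t\Delta\chi_t$. The integrand $t\mapsto X_t\Delta\chi_t$ vanishes outside the countable set $D$ of jump times of $\chi$; the restriction of the measure $d\rho$ to $D$ reduces to its atomic part $\sum_{t\in D}\Delta\rho_t\,\delta_t$, because the diffuse part of $d\rho$ charges no countable set. Consequently the integral equals $\sum_{t\in[0,T)}X_t\Delta\rho_t\Delta\chi_t$, and we obtain $\ee[X_\eta\ind{\{\eta=\theta\}\cap\{\eta<T\}}]=\ee[\sum_{t\in[0,T)}X_t\Delta\rho_t\Delta\chi_t]$.

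Finally — and this is the step requiring the most care — I would add back the contribution of the terminal atom, i.e.\ the event $\{\eta=\theta=T\}$, which is excluded by the restriction $\{\eta<T\}$ in the first identity of Lemma \ref{lem:integ_out}. Writing $\ind{\{\eta=\theta\}}=\ind{\{\eta=\theta\}\cap\{\eta<T\}}+\ind{\{\eta=\theta=T\}}$ and using that $X_T$ is $\ef_T$-measurable, it suffices to prove $\ee[X_T\ind{\{\eta=\theta=T\}}]=\ee[X_T\Delta\rho_T\Delta\chi_T]$. Here I would exploit that, conditionally on $\ef_T$, the randomisation devices $Z_\eta$ and $Z_\theta$ are independent (both are independent of $\ef_T$ and mutually independent), so that the conditional expectation factorises. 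Applying Lemma \ref{lem-eta-xi} with $\kappa=T$ yields $\ee[\ind{\{\eta\ge T\}}|\ef_T]=1-\rho_{T-}=\Delta\rho_T$ (using $\rho_T=1$) and likewise $\ee[\ind{\{\theta\ge T\}}|\ef_T]=\Delta\chi_T$. Since $\eta,\theta\le T$ gives $\{\eta\ge T\}=\{\eta=T\}$ and $\{\theta\ge T\}=\{\theta=T\}$, the factorisation produces $\ee[\ind{\{\eta=\theta=T\}}|\ef_T]=\Delta\rho_T\Delta\chi_T$, and the terminal term follows by taking expectations against $X_T$. Merging the $t=T$ term into the sum completes the proof.

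The main obstacle is precisely this separate treatment of the terminal time: because the first identity of Lemma \ref{lem:integ_out} carries the restriction $\{\eta<T\}$, the naive difference of the two identities only recovers the joint jumps on $[0,T)$, and one must re-insert the terminal atom via the conditional-independence argument above. All the remaining steps are routine manipulations of Lebesgue--Stieltjes integrals.
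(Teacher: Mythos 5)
Your proof is correct, and in its overall architecture it follows the same route as the paper: you subtract the two identities of Lemma \ref{lem:integ_out} (using $\{\eta<\theta\}\subseteq\{\eta<T\}$) to obtain $\ee\big[X_\eta\ind{\{\eta=\theta\}\cap\{\eta<T\}}\big]=\ee\big[\int_{[0,T)}X_t\Delta\chi_t\,\ud\rho_t\big]$, and you then identify this integral with $\ee\big[\sum_{t\in[0,T)}X_t\Delta\rho_t\Delta\chi_t\big]$ because the integrand is supported on the countable set of jump times of $\chi$, which the diffuse part of $\ud\rho$ does not charge — both steps are exactly the paper's. The one place where you genuinely diverge is the terminal atom. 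The paper proves $\ee\big[\ind{\{\eta=\theta=T\}}|\ef_T\big]=\Delta\rho_T\Delta\chi_T$ by a monotone limit, writing $\ind{\{\eta=\theta=T\}}=\lim_{n\to\infty}\ind{\{\eta>T-1/n\}}\ind{\{\theta>T-1/n\}}$, sandwiching each event between $\{\rho_{T-1/n}<Z_\eta\}$ and $\{\rho_{T-1/n}\le Z_\eta\}$ (analogously for $\theta$), and computing the conditional expectation explicitly. You instead factorise $\ee\big[\ind{\{\eta=T\}}\ind{\{\theta=T\}}|\ef_T\big]$ by conditional independence given $\ef_T$ and apply \eqref{eq-xi-eta-3} of Lemma \ref{lem-eta-xi} with $\kappa=T$ to each factor; this is shorter and reuses an existing lemma rather than repeating a sandwich argument at $T$. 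One caveat on your parenthetical justification: pairwise independence ($Z_\eta\perp\ef_T$, $Z_\theta\perp\ef_T$, $Z_\eta\perp Z_\theta$) is, strictly speaking, not sufficient for the factorisation; what is needed is mutual independence of the three $\sigma$-algebras $\ef_T$, $\sigma(Z_\eta)$, $\sigma(Z_\theta)$, which makes $\ef_T\vee\sigma(Z_\eta)$ and $\ef_T\vee\sigma(Z_\theta)$ conditionally independent given $\ef_T$ (and this is what your events $\{\eta=T\}$, $\{\theta=T\}$ actually live in). Since this mutual independence is precisely the paper's standing assumption on the randomisation devices — and the paper's own proof performs the very same factorisation in its third equality — this is a matter of phrasing rather than a gap in your argument.
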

\begin{proof}
From Lemma \ref{lem-alt-repr-both-rand} we have
\begin{align*}
\ee[X_\eta\ind{\{\eta=\theta\}\cap\{\eta<T\}}]=&\ee\big[X_{\eta}\big(\ind{\{\eta\le\theta\}\cap\{\eta<T\}}-\ind{\{\eta<\theta\}}\big)\big]\\
=&\ee\bigg[\int_{[0,T)}X_t\Delta\chi_t d\rho_t\bigg]=\ee\bigg[\sum_{t\in[0,T)} X_t\Delta\chi_t \Delta\rho_t\bigg],
\end{align*}
where the final equality is due to the fact that $t\mapsto\chi_t(\omega)$ has countably many jumps for each $\omega \in \Omega$ and the continuous part of the measure $d\rho_t(\omega)$ puts no mass there. Further, we notice that 
\begin{align*}
\ee\big[\ind{\{\eta=\theta=T\}}|\ef_T\big]
&=
\lim_{n\to\infty}\ee\big[\ind{\{\eta>T-1/n\}}\ind{\{\theta>T-1/n\}}|\ef_T\big]
=
\lim_{n\to\infty}\ee\big[\ind{\{\rho_{T-1/n}\le Z_\eta\}}\ind{\{\chi_{T-1/n}\le Z_\theta\}}|\ef_T\big]\\
&=
\lim_{n\to\infty}(1-\rho_{T-1/n})(1-\chi_{T-1/n})
=
\Delta\rho_T\Delta\chi_T,
\end{align*}
where the second equality is by 
\[
\{\rho_{T-1/n}<Z_\eta\}\subseteq\{\eta>T-\tfrac{1}{n}\}\subseteq\{\rho_{T-1/n}\le Z_\eta\},
\]
and analogous inclusions for $\{\theta\!>\!T\!-\!\frac{1}{n}\}$. The third equality uses that $\rho_{T-1/n}$ and $\chi_{T-1/n}$ are $\ef_T$-measurable, and $Z_\eta$, $Z_\theta$ are independent of $\ef_T$. The final equality follows since $\rho_T=\chi_T=1$. Combining the above gives
the desired result.
\end{proof}

Applying Lemma \ref{lem-alt-repr-both-rand} and Corollary \ref{cor:j} to \eqref{eqn:payoff} and \eqref{eq-uninf-payoff}, we obtain the following reformulation of the game.

\begin{Proposition}\label{prop-functionals-equal}
For $\tau\in \te^R (\ef^1_t)$, $\sigma\in \te^R(\ef^2_t)$, 
\begin{equation}
N(\tau,\sigma)= \ee\bigg[\int_{[0, T)} f_t(1-\zeta_{t})d\xi_t + \int_{[0, T)} g_t(1-\xi_t)d\zeta_t + \sum_{t \in [0, T]} h_t \Delta\xi_t \Delta\zeta_t\bigg],
\label{eq-functional-in-terms-of-controls}
\end{equation}
where $(\xi_t)$ and $(\zeta_t)$ are the generating processes for $\tau$ and $\sigma$, respectively.
\end{Proposition}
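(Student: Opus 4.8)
The plan is to start from the definition of the payoff $\mcal P(\tau,\sigma)$ in \eqref{eqn:payoff}, take expectations to form $N(\tau,\sigma)$ as in \eqref{eq-uninf-payoff}, and then integrate out the two randomisation devices $Z_\tau$ and $Z_\sigma$ one term at a time using the lemmata just established. Recall that $\tau=\tau(\xi,Z_\tau)$ and $\sigma=\sigma(\zeta,Z_\sigma)$ with $\xi,\zeta$ the generating processes in $\mcalAcirc$, and that $Z_\tau,Z_\sigma$ are independent of each other and of $\ef_T$. The key observation is that each of the three indicator-weighted terms in $\mcal P(\tau,\sigma)$ matches exactly one of the expressions handled by Lemma \ref{lem-alt-repr-both-rand} and Corollary \ref{cor:j}, once we read off which of $\tau,\sigma$ plays the role of $\eta$ and which plays $\theta$.

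Concretely, I would split $N(\tau,\sigma)=\ee[f_\tau\ind{\{\tau<\sigma\}}]+\ee[g_\sigma\ind{\{\sigma<\tau\}}]+\ee[h_\tau\ind{\{\tau=\sigma\}}]$ and treat the three pieces separately. For the first piece, apply the second statement of Lemma \ref{lem-alt-repr-both-rand} with $\eta=\tau$ (generating process $\xi$), $\theta=\sigma$ (generating process $\zeta$), and integrand $X=f$; this is legitimate because $f\in\mcalL$ is measurable and adapted, and it yields $\ee[\int_{[0,T)} f_t(1-\zeta_t)\,d\xi_t]$. For the second piece, apply the same statement but with the roles swapped, i.e. $\eta=\sigma$ (generating process $\zeta$), $\theta=\tau$ (generating process $\xi$), and $X=g$; this produces $\ee[\int_{[0,T)} g_t(1-\xi_t)\,d\zeta_t]$. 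For the third piece, I invoke Corollary \ref{cor:j} with $\eta=\tau$, $\theta=\sigma$ and $X=h$ (here $h$ is only measurable and adapted by \ref{eq-terminal-time-order-cond}, not necessarily \cadlag, which is exactly why Lemma \ref{lem-alt-repr-both-rand} and Corollary \ref{cor:j} were stated for general measurable $X$), obtaining $\ee[\sum_{t\in[0,T]} h_t\,\Delta\xi_t\,\Delta\zeta_t]$. Summing the three expressions gives \eqref{eq-functional-in-terms-of-controls}.

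The routine care needed is mostly bookkeeping at the terminal time. One should check that the event $\{\tau=T<\sigma\}$ versus $\{\tau<\sigma\}$ and the corresponding terminal-atom contributions are correctly allocated: Lemma \ref{lem-alt-repr-both-rand} already distinguishes $\{\eta\les\theta\}\cap\{\eta<T\}$ from $\{\eta<\theta\}$, and the Corollary absorbs the terminal coincidence $\{\tau=\sigma=T\}$ into the $\Delta\xi_T\,\Delta\zeta_T$ term, so the decomposition $\ind{\{\tau<\sigma\}}+\ind{\{\sigma<\tau\}}+\ind{\{\tau=\sigma\}}=1$ is preserved without double counting. I would also note that integrability of all three terms follows from $f,g\in\mcalL$ and $|h|\le |f|\vee|g|$ (a consequence of \ref{eq-order-cond}), so that Fubini and the conditioning arguments inside the lemmata apply.

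I do not anticipate a genuine obstacle here, since the proposition is essentially an assembly of the preceding three results. The one point requiring a moment's attention is the asymmetry between the first two terms: the first uses the $(1-\zeta_{t})$ weight (the strict inequality $\{\tau<\sigma\}$), whereas one must verify that reversing the roles for the $g$-term also lands on the strict version $\{\sigma<\tau\}$ with weight $(1-\xi_t)$ rather than $(1-\xi_{t-})$. This is exactly the content of the second displayed identity in Lemma \ref{lem-alt-repr-both-rand}, so the matching is clean; the main thing is simply to be careful that I invoke the strict-inequality statement (weight $1-\chi_t$), not the $\{\eta\les\theta\}$ statement (weight $1-\chi_{t-}$), for both of the first two terms.
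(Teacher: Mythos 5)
Your proposal is correct and coincides with the paper's own (essentially one-line) proof: the paper obtains Proposition \ref{prop-functionals-equal} precisely by applying the second identity of Lemma \ref{lem-alt-repr-both-rand} to the terms $\ee[f_\tau \ind{\{\tau<\sigma\}}]$ and $\ee[g_\sigma \ind{\{\sigma<\tau\}}]$ (with the roles of $\eta$ and $\theta$ swapped) and Corollary \ref{cor:j} to $\ee[h_\tau \ind{\{\tau=\sigma\}}]$, exactly as you describe. Your added remarks on the strict-inequality weights $(1-\zeta_t)$, $(1-\xi_t)$ and on the integrability of $h$ via \ref{eq-order-cond} are the right points of care and are consistent with the paper.
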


With a slight abuse of notation, we will denote the right-hand side of \eqref{eq-functional-in-terms-of-controls} by $N(\xi,\zeta)$. 

\begin{remark}\label{rem-Laraki-Solan}
In the Definition \ref{def-value-rand-strat} of the lower value, the infimum can always be replaced by infimum over \emph{pure} stopping times (cf. \cite{LarakiSolan2005}). Same holds for the supremum in the definition of the upper value.

Let us look at the upper value: take arbitrary $\tau\in \te^R(\ef^1_t)$, $\sigma\in \te^R(\ef^2_t)$, and define the family of stopping times
\begin{equation*}
q(y)=\inf\{t\in [0,T]: \zeta_t > y\}, \qquad y \in [0,1),
\end{equation*}
similarly to the proof of Lemma \ref{lem-alt-repr-both-rand} and with $(\zeta_t)$ the generating process of $\sigma$. Then,
\begin{equation*}
N(\tau,\sigma)=\int_0^1 N(\tau,q(y)) dy \les \sup_{y\in[0,1)} N(\tau,q(y))\les \sup_{\sigma\in\te(\ef^2_t)} N(\tau,\sigma),
\end{equation*}
where $\te(\ef^2_t)$ denotes the set of pure $(\ef^2_t)$-stopping times. Since $\te(\ef^2_t) \subset \te^R(\ef^2_t)$, we have
\begin{equation*}
\sup_{\sigma\in\te^R(\ef^2_t)}N(\tau,\sigma)= \sup_{\sigma\in\te(\ef^2_t)} N(\tau,\sigma),
\end{equation*}
and, consequently, the `{\em inner}' optimisation can be done over pure stopping times:
\begin{equation*}
\inf_{\tau\in\te^R(\ef^1_t)}\sup_{\sigma\in\te^R(\ef^2_t)} N(\tau,\sigma)= \inf_{\tau\in \te^R(\ef^1_t)}\sup_{\sigma\in\te(\ef^2_t)} N(\tau,\sigma).
\end{equation*}
By the same argument one can show that
\begin{equation*}
\sup_{\sigma\in\te^R(\ef^2_t)} \inf_{\tau\in\te^R(\ef^1_t)} N(\tau,\sigma)= \sup_{\sigma\in\te^R(\ef^2_t)} \inf_{\tau\in \te(\ef^1_t)} N(\tau,\sigma).
\end{equation*}
However, in general an analogue result for the `{\em outer}' optimisation does not hold, i.e.,
\begin{equation*}
\sup_{\sigma\in\te^R(\ef^2_t)} \inf_{\tau\in \te^R(\ef^1_t)} N(\tau,\sigma)\neq \sup_{\sigma\in\te(\ef^2_t)} \inf_{\tau\in \te^R(\ef^1_t)} N(\tau,\sigma)
\end{equation*}
as shown by an example in Section \ref{sec:Nikita-examples}.
\end{remark}

\section{Sion's theorem and existence of value}\label{sec-Sion-existence-of-value}\label{sec:sions}
The proofs of Theorems \ref{thm:main2} and \ref{thm:main}, i.e., that the game with payoff \eqref{eq-uninf-payoff} has a value in randomised strategies, utilises Sion's min-max theorem \cite{Sion1958} (see also \cite{Komiya1988} for a simple proof). The idea of relying on Sion's theorem comes from \cite{TouziVieille2002} where the authors study zero-sum Dynkin games with full and symmetric information. Here, however, we need different key technical arguments as explained in, e.g., Remark \ref{rem-TV-norm-doesnt-work} below.

Let us start by recalling Sion's theorem.
\begin{theorem}[Sion's theorem]\label{th-the-Sion}
\cite[Corollary 3.3]{Sion1958}
Let $A$ and $B$ be convex subsets of a linear topological space one of which is compact. Let $\varphi(\mu,\nu)$ be a function $A\times B \mapsto \er$ that is quasi-concave and upper semi-continuous in $\mu$ for each $\nu\in B$, and quasi-convex and lower semi-continuous in $\nu$ for each $\mu\in A$. Then,
\begin{equation*}
\sup_{\mu\in A}\inf_{\nu\in B} \varphi(\mu,\nu)=\inf_{\nu\in B}\sup_{\mu\in A} \varphi(\mu,\nu).
\end{equation*}
\end{theorem}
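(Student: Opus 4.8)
The plan is to establish the nontrivial inequality $\inf_{\nu}\sup_{\mu}\varphi(\mu,\nu)\le \sup_{\mu}\inf_{\nu}\varphi(\mu,\nu)$, since the reverse (weak duality) is immediate from $\inf_{\nu}\varphi(\mu,\nu)\le\varphi(\mu,\nu')\le\sup_{\mu}\varphi(\mu,\nu')$ for all $\mu,\nu'$. Writing $\alpha:=\sup_{\mu}\inf_{\nu}\varphi$ and $\beta:=\inf_{\nu}\sup_{\mu}\varphi$, I would argue by contradiction: suppose $\alpha<\beta$ and fix a threshold $s\in(\alpha,\beta)$, assuming without loss of generality (the two variables playing symmetric roles) that it is $B$ which is compact. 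The geometric objects driving the proof are the sublevel and superlevel sets $B_\mu:=\{\nu\in B:\varphi(\mu,\nu)\le s\}$ and $A_\nu:=\{\mu\in A:\varphi(\mu,\nu)\ge s\}$. Quasi-convexity of $\varphi(\mu,\cdot)$ and quasi-concavity of $\varphi(\cdot,\nu)$ make these sets convex, while lower semicontinuity in $\nu$ and upper semicontinuity in $\mu$ make $B_\mu$ closed in $B$ and $A_\nu$ closed in $A$.

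The first main step is a compactness reduction. Since $s<\beta$, for every $\nu$ there is a $\mu$ with $\varphi(\mu,\nu)>s$, i.e. $\bigcap_{\mu\in A}B_\mu=\emptyset$; as $B$ is compact and each $B_\mu$ is closed, the finite intersection property yields finitely many points $\mu_1,\dots,\mu_n\in A$ with $\bigcap_i B_{\mu_i}=\emptyset$. Equivalently, $\max_{1\le i\le n}\varphi(\mu_i,\nu)>s$ for every $\nu\in B$, and since $\nu\mapsto\max_i\varphi(\mu_i,\nu)$ is lower semicontinuous on the compact set $B$ its minimum is attained and strictly exceeds $s$. This reduces matters to a finite-dimensional minimax on the simplex $\mathrm{conv}\{\mu_1,\dots,\mu_n\}\subseteq A$: it now suffices to produce a single $\mu^{\ast}\in\mathrm{conv}\{\mu_1,\dots,\mu_n\}$ with $\inf_{\nu}\varphi(\mu^{\ast},\nu)>s$, for this would give $\alpha\ge\inf_{\nu}\varphi(\mu^{\ast},\nu)>s$, contradicting $s>\alpha$. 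I would obtain such a $\mu^{\ast}$ from the inequality $\min_{\nu}\max_i\varphi(\mu_i,\nu)\le\max_{\mu\in\mathrm{conv}\{\mu_i\}}\inf_{\nu}\varphi(\mu,\nu)$, proved by induction on $n$, the base case $n=1$ being trivial.

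The engine of the induction, and the step I expect to be the main obstacle, is the two-point case. Given the endpoints of a segment $[\mu',\mu'']\subseteq A$ and a level $c<\min_{\nu}\max\!\big(\varphi(\mu',\nu),\varphi(\mu'',\nu)\big)$, one must find an interior point $\mu_\lambda=(1-\lambda)\mu'+\lambda\mu''$ with $\varphi(\mu_\lambda,\nu)>c$ for all $\nu$. Here quasi-concavity in $\mu$ alone is insufficient and the topology must be exploited. The idea is to split $B$ into the two closed, convex, and (by the choice of $c$) disjoint pieces $P:=\{\varphi(\mu',\cdot)\le c\}$ and $Q:=\{\varphi(\mu'',\cdot)\le c\}$, and to track the closed convex sublevel set $R_\lambda:=\{\nu:\varphi(\mu_\lambda,\nu)\le c\}$ as $\lambda$ runs over the connected interval $[0,1]$. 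Quasi-concavity in $\mu$ forces $R_\lambda\subseteq P\cup Q$, so by convexity (hence connectedness) each nonempty $R_\lambda$ lies entirely in $P$ or entirely in $Q$; a connectedness argument on $[0,1]$, together with the compactness of $B$ and the semicontinuity in $\nu$ used to verify that the relevant $\lambda$-sets are closed, then shows that some $R_\lambda$ must be empty, which is exactly the desired point.

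The delicate point within this last step is the closedness in $\lambda$ of the sets $\{\lambda:R_\lambda\cap P\neq\emptyset\}$ and $\{\lambda:R_\lambda\cap Q\neq\emptyset\}$: passing to a limit $\lambda_k\to\lambda$ with $\nu_k\in R_{\lambda_k}$ and $\nu_k\to\nu$ requires controlling $\varphi$ jointly in $(\lambda,\nu)$, whereas we are only given \emph{separate} upper semicontinuity in $\mu$ and lower semicontinuity in $\nu$. This is precisely where all four structural hypotheses genuinely interact, and it is the heart of Sion's argument; everything else (the weak-duality direction, the finite intersection reduction, and the inductive assembly of $\mu^{\ast}$) is essentially bookkeeping. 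Finally, the symmetric case in which $A$ rather than $B$ is compact follows verbatim by exchanging the roles of $\mu$ and $\nu$, replacing $B_\mu$ by the superlevel sets $A_\nu$ and running the same argument with the inequalities reversed.
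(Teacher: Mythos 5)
First, a point of reference: the paper does not prove this statement at all — it is quoted from \cite[Corollary 3.3]{Sion1958}, with \cite{Komiya1988} cited for a simple proof. Your outline in fact reproduces the skeleton of Komiya's elementary proof: weak duality, a finite-intersection-property reduction using compactness of $B$ and closedness of the sublevel sets $B_\mu$, an induction on $n$ over $\mathrm{conv}\{\mu_1,\dots,\mu_n\}$, and a two-point lemma proved by a connectedness argument on the segment $[\mu',\mu'']$. That architecture is correct, so the right comparison is with Komiya's argument.

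The gap is exactly at the step you yourself flag as ``the main obstacle,'' and it is genuine: with a \emph{single} threshold $c$ defining $P$, $Q$ and $R_\lambda$, the dichotomy sets $\{\lambda: R_\lambda\subseteq P\}$ and $\{\lambda: R_\lambda\subseteq Q\}$ cannot be shown closed from the hypotheses, and the mechanism you suggest (taking $\lambda_k\to\lambda$, $\nu_k\in R_{\lambda_k}$, $\nu_k\to\nu$) is unavailable in principle: only separate semicontinuity is assumed, and compactness of $B$ does not rescue it — indeed compactness of $B$ plays no role whatsoever in this lemma. The missing idea is Komiya's \emph{two-level} trick. Fix $c<c'<\inf_{\nu\in B}\max\{\varphi(\mu',\nu),\varphi(\mu'',\nu)\}$; assume for contradiction each $R_\lambda:=\{\nu:\varphi(\mu_\lambda,\nu)\le c\}$ is nonempty, but run the dichotomy at the \emph{higher} level: $R'_\lambda:=\{\nu:\varphi(\mu_\lambda,\nu)\le c'\}$ is convex and, by quasi-concavity in $\mu$, contained in the disjoint union of the closed convex sets $P':=\{\varphi(\mu',\cdot)\le c'\}$ and $Q':=\{\varphi(\mu'',\cdot)\le c'\}$, hence lies entirely in one of them. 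Closedness of $I':=\{\lambda:R'_\lambda\subseteq P'\}$ then requires no limits in $\nu$ at all: given $\lambda_k\to\lambda$ with $\lambda_k\in I'$, pick a \emph{single fixed witness} $\nu_0\in R_\lambda$ (lower level); since $t\mapsto\varphi(\mu_t,\nu_0)$ is upper semicontinuous (composition of $\varphi(\cdot,\nu_0)$ with the continuous map $t\mapsto\mu_t$), we get $\varphi(\mu_{\lambda_k},\nu_0)<c'$ for large $k$, so $\nu_0\in R'_{\lambda_k}\subseteq P'$, whence $R'_\lambda\cap P'\neq\emptyset$ and, by connectedness of $R'_\lambda$, $R'_\lambda\subseteq P'$. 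The strict gap between the two levels is what absorbs the loss inherent in upper semicontinuity ($\limsup_k\varphi(\mu_{\lambda_k},\nu_0)\le c$ only gives $\varphi(\mu_{\lambda_k},\nu_0)\le c+\varepsilon$, which is useless at a single level but conclusive below $c'$). A secondary, smaller omission: the inductive step from $n-1$ to $n$ points is not pure bookkeeping either — one must apply the induction hypothesis with $B$ replaced by the convex sublevel set $\{\nu\in B:\varphi(\mu_n,\nu)\le c'\}$ (the hypotheses do restrict to convex subsets), and only then invoke the two-point lemma for the resulting point paired with $\mu_n$.
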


The key step in applying Sion's theorem is to find a topology on the set of randomised stopping times, or, equivalently, on the set of corresponding generating processes so that the functional $N(\cdot, \cdot)$ satisfies the assumptions. We will use the weak topology of 
\[
\es := L^2 \big([0, T] \times \Omega, \mathcal{B}([0, T]) \times \ef, \lambda \times \prob\big),
\]
where $\lambda$ denotes the Lebesgue measure on $[0, T]$. Given a filtration $(\mcalG_t) \subseteq (\ef_t)$, in addition to the class of increasing processes $\mcalAcirc(\mcalG_t)$ introduced in Section \ref{sec:setting}, here we also need
\begin{align*}
\mcalAcircAc (\mcalG_t) :=&\,\{\rho\in \mcalAcirc(\mcalG):\,\text{$t\mapsto\rho_t(\omega)$ is absolutely continuous on $[0,T)$ for all $\omega\in\Omega$}\}.
\end{align*}
It is important to notice that $\rho\in\mcalAcircAc(\mcalG_t)$ may have a jump at time $T$ if
\[
\rho_{T-}(\omega):=\lim_{t\uparrow T}\int_0^t\big(\tfrac{\ud}{\ud t}\rho_s\big)(\omega)\ud s<1=\rho_T(\omega).
\] 
As with $\mcalAcirc(\mcalG_t)$, in the definition of $\mcalAcircAc(\mcalG_t)$ we require that the stated properties hold for all $\omega \in \Omega$, which causes no loss of generality if $\mcalG_0$ contains all $\prob$-null sets of $\Omega$. It is clear that $\mcalAcircAc (\mcalG_t) \subset\mcalAcirc(\mcalG_t) \subset\mathcal{S}$.

For reasons that will become clear later (e.g., see Lemma \ref{lem-strat-set-compact}), we prefer to work with slightly more general processes than those in $\mcalAcirc(\mcalG_t)$ and $\mcalAcircAc(\mcalG_t)$. Let us denote
\begin{align*}
\laa(\mcalG_t) :=&\, \{ \rho \in \mathcal S : \,\exists\; \hat\rho\in\mcalAcirc (\mcalG_t) \,\text{such that $\rho = \hat \rho$ for $(\lambda \times \prob)$\ae $(t,\omega)\in[0,T]\times\Omega$}\},\\
\laac(\mcalG_t) := &\,\{ \rho \in \mathcal S : \,\exists\; \hat\rho\in\mcalAcircAc (\mcalG_t) \,\text{such that $\rho = \hat \rho$ for $(\lambda \times \prob)$\ae $(t,\omega)\in[0,T]\times\Omega$}\}.
\end{align*}
We will call $\hat \rho$ in the definition of the set $\laa$ (and $\laac$) the \emph{\cadlag} (and {\em absolutely continuous}) {\em representative} of $\rho$. Although it is not unique, all \cadlag representatives are indistinguishable (Lemma \ref{lem:cadlag_indis}). Hence, all \cadlag representatives $\hat\rho$ of $\rho\in\laa$ define the same positive measure on $[0,T]$ for $\prob$\ae $\omega\in\Omega$ via a non-decreasing mapping $t\mapsto\hat\rho_t(\omega)$.
Then, given any bounded measurable process $(X_t)$ the stochastic process (Lebesgue-Stieltjes integral)
\begin{equation*}
t \mapsto \int_{[0, t]} X_s\, \ud\hat \rho_s, \qquad t \in [0, T],  
\end{equation*}
does not depend on the choice of the \cadlag representative $\hat \rho$ in the sense that it is defined up to indistinguishability.

The next definition connects the randomised stopping times that we use in the construction of the game's payoff (Proposition \ref{prop-functionals-equal}) with processes from the classes $\laa(\ef^1_t)$ and $\laa(\ef^2_t)$. Note that $\laa(\mcalG_t) \subseteq \laa(\ef_t)$ whenever $(\mcalG_t) \subseteq (\ef_t)$, so the definition can be stated for $\laa(\ef_t)$ without any loss of generality.
\begin{definition}\label{def:integral}
Let $(X_t)$ be measurable and such that $\|X\|_{\mcalL}\!<\!\infty$ (not necessarily {\cadlag\!\!}). For $\chi,\rho \in \laa(\ef_t)$, we define the Lebesgue-Stieltjes integral processes
\[
t \mapsto \int_{[0, t]} X_s\, \ud\rho_s,\quad t\mapsto\int_{[0, t]} X_s\,(1-\chi_{s}) \ud\rho_s\quad\text{and}\quad t\mapsto\int_{[0, t]} X_s\,(1-\chi_{s-}) \ud\rho_s \qquad t \in [0, T],   
\]
by 
\[
t \mapsto \int_{[0, t]} X_s\, \ud\hat{\rho}_s,\quad t\mapsto\int_{[0, t]} X_s\,(1-\hat{\chi}_{s}) \ud\hat{\rho}_s\quad\text{and}\quad t\mapsto\int_{[0, t]} X_s\,(1-\hat{\chi}_{s-}) \ud\hat{\rho}_s \qquad t \in [0, T],   
\]
for any choice of the \cadlag representatives $\hat \rho$ and $\hat \chi$, uniquely up to indistinguishability.
\end{definition}

With a slight abuse of notation we define a functional $N: \laa(\ef^1_t) \times \laa(\ef^2_t) \to \er$ by the right-hand side of \eqref{eq-functional-in-terms-of-controls}. It is immediate to verify using Definition \ref{def-value-rand-strat} and Proposition \ref{prop-functionals-equal} that the lower and the upper value of our game satisfy
\begin{align}\label{eq:VV}
V_{*}=\sup_{\zeta\in\laa(\ef^2_t)}\inf_{\xi\in\laa(\ef^1_t)} N(\xi,\zeta), \qquad V^*=\inf_{\xi\in\laa(\ef^1_t)} \sup_{\zeta\in\laa(\ef^2_t)} N(\xi,\zeta).
\end{align}
Notice that even though according to Definition \ref{def-rand-st} the couple $(\xi,\zeta)$ should be taken in $\mcalAcirc(\ef^1_t)\times\mcalAcirc(\ef^2_t)$, in \eqref{eq:VV} we consider $(\xi,\zeta)\in \laa(\ef^1_t)\times\laa(\ef^2_t)$. This causes no inconsistency thanks to the discussion above and Definition \ref{def:integral} for integrals.

\begin{remark} 
The mapping $\laa(\ef^1_t) \times \laa(\ef^2_t) \ni (\xi, \zeta) \mapsto N(\xi, \zeta)$ does not satisfy the conditions of Sion's theorem under the strong or the weak topology of $\es$. Indeed, taking $\xi^n_t = \ind{\{t \ge T/2 + 1/n\}}$, we have $\xi^n_t \to \ind{\{t \ge T/2\}}=:\xi_t$ for $\lambda$\ae $t \in [0, T]$, so that by the dominated convergence theorem $(\xi^n)$ also converges to $\xi$ in $\es$. Then, fixing $\zeta_t = \ind{\{t \ge T/2\}}$ in $\laa(\ef^2_t)$ we have $N(\xi^n, \zeta) = \ee[g_{T/2}]$ for all $n\ge 1$ whereas $N(\xi, \zeta) =\ee[ h_{T/2} ]$. So the lower semicontinuity of $\xi \mapsto N(\xi, \zeta)$ cannot be ensured if, for example, $\prob(h_{T/2}>g_{T/2})>0$.
\end{remark}

Due to issues indicated in the above remark, as in \cite{TouziVieille2002}, we `smoothen' the control strategy of one player in order to introduce additional regularity in the payoff. We will show that this procedure does not change the value of the game (Proposition \ref{thm:conv_lipsch}). We choose (arbitrarily and with no loss of generality, thanks to Remark \ref{rem:ineq}) to consider an auxiliary game in which the  first player can only use controls from $\laac (\ef^1_t)$. Let us define the associated upper/lower values:
\begin{equation}\label{eq-value-cont-restriction}
W_{*}=\sup_{\zeta\in\laa(\ef^2_t)}\inf_{\xi\in\laac(\ef^1_t)} N(\xi,\zeta)\quad\text{and}\quad W^*=\inf_{\xi\in\laac(\ef^1_t)} \sup_{\zeta\in\laa(\ef^2_t)} N(\xi,\zeta).
\end{equation}

Here, we work under the regularity assumption on the payoff processes \ref{ass:regular}. Relaxation of this assumption is conducted in Section \ref{sec:relax}.
The main results can be distilled into the following theorems:

\begin{theorem}\label{th-value-cont-strat}
Under assumptions \ref{eq-integrability-cond}-\ref{ass:filtration}, the game (\ref{eq-value-cont-restriction}) has a value, i.e.
\begin{equation*}
W_{*}=W^{*}:=W.
\end{equation*}
Moreover, the $\zeta$-player (maximiser) has an optimal strategy, i.e. there exists $\zeta^*\in\laa(\ef^2_t)$ such that
\begin{equation*}
\inf_{\xi\in\laac(\ef^1_t)} N(\xi,\zeta^*)=W.
\end{equation*}
\end{theorem}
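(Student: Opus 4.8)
The plan is to obtain $W_* = W^*$ as a direct application of Sion's min-max theorem (Theorem \ref{th-the-Sion}) to the functional $N$ of \eqref{eq-functional-in-terms-of-controls}, viewed on the product $\laa(\ef^2_t) \times \laac(\ef^1_t)$ and endowed with the weak topology of $\es$. I would set $A = \laa(\ef^2_t)$ (the maximiser's strategies, playing the role of the supremum variable), $B = \laac(\ef^1_t)$ (the minimiser's restricted strategies, the infimum variable), and put $\varphi(\zeta,\xi) := N(\xi,\zeta)$. With this identification the two quantities in \eqref{eq-value-cont-restriction} are exactly $W_* = \sup_{\zeta \in A}\inf_{\xi \in B}\varphi(\zeta,\xi)$ and $W^* = \inf_{\xi \in B}\sup_{\zeta \in A}\varphi(\zeta,\xi)$, so the conclusion of Sion's theorem is precisely the asserted equality.

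I would then check the hypotheses of Theorem \ref{th-the-Sion} one by one. Convexity of $A$ and $B$ is routine: a convex combination of non-decreasing processes with $\rho_{0-}=0$ and $\rho_T=1$ is again of this form, adaptedness to the respective filtration is preserved, and absolute continuity on $[0,T)$ is stable under convex combinations, so $\laac(\ef^1_t)$ is convex as well. Compactness of $A$ in the weak topology of $\es$ is the content of Lemma \ref{lem-strat-set-compact}, since the elements of $\laa(\ef^2_t)$ take values in $[0,1]$ and thus form a bounded, convex, weakly closed subset of the Hilbert space $\es$. Separate affinity of $\varphi$ is read directly off \eqref{eq-functional-in-terms-of-controls}: each of the first two integrals is linear in one generating process (through its driving measure) and affine in the other (through the factor $1-\cdot$), while the restriction $\xi \in \laac(\ef^1_t)$ forces $\Delta\xi_t = 0$ for $t \in [0,T)$, so the jump sum collapses to the single bilinear term $h_T \Delta\xi_T \Delta\zeta_T$. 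Hence $\zeta \mapsto \varphi(\zeta,\xi)$ and $\xi \mapsto \varphi(\zeta,\xi)$ are affine, so in particular quasi-concave and quasi-convex, and only the appropriate semicontinuity remains.

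The heart of the argument, and the step I expect to be the main obstacle, is the semicontinuity of $N$ in the weak topology of $\es$: for fixed $\xi \in \laac(\ef^1_t)$ the map $\zeta \mapsto N(\xi,\zeta)$ must be upper semicontinuous, and for fixed $\zeta \in \laa(\ef^2_t)$ the map $\xi \mapsto N(\xi,\zeta)$ must be lower semicontinuous. The difficulty is that weak convergence $\zeta^n \rightharpoonup \zeta$ in $\es$ controls the $\zeta^n$ only as $L^2$-functions and not as the Stieltjes integrators $d\zeta^n$ that appear in \eqref{eq-functional-in-terms-of-controls}; passing to the limit in $\int_{[0,T)} g_t(1-\xi_t)\,d\zeta^n_t$ and in the terminal jump term requires an integration-by-parts reformulation together with the regularity assumption \ref{ass:regular}, which rules out the emergence of spurious predictable jumps in the limit. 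The lower semicontinuity in $\xi$, on the other hand, is exactly what the restriction to absolutely continuous controls repairs, in contrast with the counterexample in the Remark preceding this theorem. These convergence properties are the content of the technical results of Sections \ref{sec:tech} and \ref{sec:verif}, which I would invoke here; since the sets $A$ and $B$ are bounded and convex, it is convenient to verify them along weakly convergent sequences. Granting this, Theorem \ref{th-the-Sion} gives $W_* = W^* =: W$.

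Finally, for the maximiser's optimal strategy I would note that $\zeta \mapsto \inf_{\xi \in \laac(\ef^1_t)} N(\xi,\zeta)$ is an infimum of upper-semicontinuous functions, hence upper semicontinuous, on the weakly compact set $\laa(\ef^2_t)$; it therefore attains its supremum at some $\zeta^* \in \laa(\ef^2_t)$, giving $\inf_{\xi \in \laac(\ef^1_t)} N(\xi,\zeta^*) = W_* = W$, which is the claimed existence of $\zeta^*$.
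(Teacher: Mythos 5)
Your proposal follows the same road as the paper for the existence of the value: Sion's theorem applied to $N$ on $\laac(\ef^1_t)\times\laa(\ef^2_t)$ with the weak topology of $\es$, convexity of both strategy sets, weak compactness of $\laa(\ef^2_t)$ from Lemma \ref{lem-strat-set-compact}, affinity of $N$ in each variable (with the jump sum in \eqref{eq-functional-in-terms-of-controls} collapsing to the single term $h_T\Delta\xi_T\Delta\zeta_T$ because $\xi$ is continuous on $[0,T)$), and the semicontinuity deferred to the technical sections. One point of mechanism, however, needs correction: semicontinuity in the weak topology is \emph{not} verified ``along weakly convergent sequences''. That route would fail, because a weakly convergent sequence in $\es$ need not admit a $(\lambda\times\prob)$\ae convergent subsequence (think of highly oscillating sequences converging weakly but nowhere pointwise), and the extraction of a.e.\ convergent subsequences is exactly what all the limit arguments for the Stieltjes integrals rest on; moreover the weak topology on $\laa$ is only known to be metrizable under separability assumptions on $\ef$. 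What the paper actually does --- and what your appeal to convexity should be made to say explicitly --- is the following: Lemma \ref{lem:semi-cont} establishes semicontinuity along \emph{strongly} convergent sequences (where a.e.\ convergent subsequences exist by \cite[Theorem 4.9]{Brezis2010}); by affinity of $N$ the relevant level sets are convex; and then \cite[Theorem 3.7]{Brezis2010} (strongly closed convex sets are weakly closed) upgrades strong closedness of the level sets to weak closedness, which is precisely the quasi-convexity/quasi-concavity plus semicontinuity in the weak topology that Theorem \ref{th-the-Sion} requires. With that repair, your verification of Sion's hypotheses coincides with the paper's.

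For the existence of the maximiser's optimal strategy you take a genuinely different and more self-contained route: $\zeta\mapsto\inf_{\xi\in\laac(\ef^1_t)}N(\xi,\zeta)$ is an infimum of weakly upper semicontinuous functions, hence weakly upper semicontinuous, and therefore attains its supremum on the weakly compact set $\laa(\ef^2_t)$. This argument is correct, granted the weak upper semicontinuity of each $N(\xi,\cdot)$ established above, and it needs no metrizability. The paper instead invokes the version of Sion's theorem in \cite{Komiya1988}, which permits replacing $\sup$ by $\max$ over the compact side; the mathematical content is the same, but your argument makes the mechanism explicit rather than outsourcing it to the cited reference.
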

\begin{Proposition}\label{thm:conv_lipsch}
Under assumptions \ref{eq-integrability-cond}-\ref{ass:filtration}, for any $\zeta \in \laa(\ef^2_t)$ and $\xi \in \laa(\ef^1_t)$, there is a sequence $\xi^n \in \laac(\ef^1_t)$ such that
\[
\limsup_{n \to \infty} N(\xi^n, \zeta) \le N(\xi, \zeta).
\]
\end{Proposition}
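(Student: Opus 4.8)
The plan is to smoothen the minimiser's control $\xi$ by a backward time-average, which amounts to \emph{delaying} the associated randomised stopping time, and then to pass to the limit in the three terms of the functional \eqref{eq-functional-in-terms-of-controls}. Concretely, fix a \cadlag representative $\hat\xi$ of $\xi$ and set, for $n\ge 1$,
\[
\xi^n_t := n\int_{(t-1/n)\vee 0}^{t}\hat\xi_s\,\ud s \quad (t\in[0,T)),\qquad \xi^n_T:=1 .
\]
Then $\xi^n$ is absolutely continuous on $[0,T)$ with non-negative density $n\big(\hat\xi_t-\hat\xi_{(t-1/n)\vee0}\big)$, hence non-decreasing; it is $(\ef^1_t)$-adapted because it only averages past values of the adapted process $\hat\xi$; and $\xi^n_{0-}=0$, $\xi^n_T=1$. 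Thus $\xi^n\in\laac(\ef^1_t)$. Moreover $\xi^n_t\le\hat\xi_{t-}\le\hat\xi_t$ for every $t$, and by (left) Lebesgue differentiation of the running average $\xi^n_t\to\hat\xi_{t-}$ for all $t\in(0,T]$, while $\xi^n_{T-}=n\int_{T-1/n}^{T}\hat\xi_s\,\ud s\to\hat\xi_{T-}$. Probabilistically, if $\tau^n$ is generated by $\xi^n$ with the same device $Z_\tau$ as $\tau$, then $\xi^n\le\xi$ forces $\tau^n\ge\tau$, i.e.\ the minimiser stops later.

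Next I would show that each of the three terms of $N(\xi^n,\zeta)$ converges. For the first term the key observation is that $\ud\xi^n$ is a \emph{rightward} smoothing of $\ud\xi$: for bounded measurable $\psi$ one has $\int_{[0,T)}\psi_t\,\ud\xi^n_t=\int_{[0,T)}\big(n\int_0^{1/n}\psi_{u+s}\,\ud s\big)\,\ud\xi_u$. Taking $\psi_t=f_t(1-\zeta_t)$ and using right-continuity of $f$ and $\zeta$ together with the domination $\sup_t|f_t|\in L^1$ from \ref{eq-integrability-cond}, the inner average tends as $s\downarrow0$ to $f_u(1-\zeta_u)$, so dominated convergence gives the limit $\ee\big[\int_{[0,T)}f_t(1-\zeta_t)\,\ud\xi_t\big]$, i.e.\ exactly the first term of $N(\xi,\zeta)$; the rightward shift is precisely what lets me avoid any control on the left jumps of $f$. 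For the second term, $\xi^n_t\to\hat\xi_{t-}$ and dominated convergence (with $\sup_t|g_t|\in L^1$) yield $\ee\big[\int_{[0,T)}g_t(1-\xi_{t-})\,\ud\zeta_t\big]$. For the third term, since $\xi^n$ is continuous on $[0,T)$ only the atom at $T$ survives, and $\xi^n_{T-}\to\hat\xi_{T-}$ gives $\ee\big[h_T(1-\xi_{T-})\Delta\zeta_T\big]=\ee\big[h_T\,\Delta\xi_T\,\Delta\zeta_T\big]$.

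Finally I would collect the three limits. Writing $\int_{[0,T)}g_t(1-\xi_{t-})\,\ud\zeta_t=\int_{[0,T)}g_t(1-\xi_t)\,\ud\zeta_t+\sum_{t<T}g_t\Delta\xi_t\Delta\zeta_t$ and comparing the limit of the third term with its counterpart $\sum_{t\le T}h_t\Delta\xi_t\Delta\zeta_t$ in $N(\xi,\zeta)$, the atoms at $T$ cancel and I obtain
\[
\lim_{n\to\infty}N(\xi^n,\zeta)=N(\xi,\zeta)+\ee\Big[\sum_{t<T}(g_t-h_t)\,\Delta\xi_t\,\Delta\zeta_t\Big]\les N(\xi,\zeta),
\]
the inequality being \ref{eq-order-cond} ($g\le h$) together with $\Delta\xi_t,\Delta\zeta_t\ge0$. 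Since the limit exists, $\limsup_{n}N(\xi^n,\zeta)\le N(\xi,\zeta)$, as claimed. Conceptually this is the statement that delaying the minimiser converts the simultaneous stops (payoff $h$) into the minimiser stopping second (payoff $g\le h$), while leaving the ``first-mover'' term unchanged.

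I expect the main obstacle to be the rigorous passage to the limit in the first term: one must justify the rightward-smoothing identity as an equality of Lebesgue--Stieltjes integrals $\omega$-wise, handle the terminal boundary (where $u+s$ may exceed $T$ and the mass shifted past $T$ is absorbed into the artificial jump $\xi^n_T=1$), and interchange the $\ud s$-average, the $\ud\xi_u$-integral and $\ee$. The accompanying bookkeeping that turns the discrepancy between $\xi^n_t\to\hat\xi_{t-}$ (second term) and the vanishing of the interior atoms of $\xi^n$ (third term) into the single favourable quantity $\sum_{t<T}(g_t-h_t)\Delta\xi_t\Delta\zeta_t$ is the heart of the argument; notably, the rightward direction of the shift makes regularity of $f$ superfluous here, only right-continuity and integrability being used.
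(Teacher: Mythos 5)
Your proposal is correct, and your approximating sequence is in fact \emph{identical} to the paper's: the paper's $\xi^n_t=\int_{[0,t]}\phi^n_{t-s}\,\ud\xi_s$ with $\phi^n_u=(nu)\wedge 1\vee 0$ equals your running average $n\int_{(t-1/n)\vee 0}^{t}\hat\xi_s\,\ud s$ by Fubini, and your handling of the second and third terms ($\xi^n_t\to\hat\xi_{t-}$, $\Delta\xi^n_T\to\Delta\xi_T$, dominated convergence) and the final bookkeeping via \ref{eq-order-cond} reproduce the paper's estimate \eqref{eq-remove-common-jumps}. The genuine difference is in the first term, and it is a real one. The paper routes this step through Proposition \ref{prop-specific-convergence-3}, a statement about \emph{arbitrary} monotone sequences converging to left limits, whose proof relies on Proposition \ref{prop:r-convergence} and hence on the regularity assumption \ref{ass:regular} for $f$ (via the representation of regular processes as optional projections of continuous ones). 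You instead exploit the specific rightward-convolution structure of $\xi^n$: the pathwise Fubini identity $\int_{[0,T)}\psi_t\,\ud\xi^n_t=\int_{[0,T]}\bigl(n\int_0^{(1/n)\wedge(T-u)}\psi_{u+s}\,\ud s\bigr)\,\ud\xi_u$, the right-continuity of $\psi_t=f_t(1-\zeta_t)$, and the domination $\sup_t|f_t|\in L^1$ from \ref{eq-integrability-cond}, followed by dominated convergence first in $\ud\xi_u$ and then in $\prob$. This argument is sound and more elementary; as you observe, it proves the present proposition \emph{without} using \ref{ass:regular} at all (regularity remains essential elsewhere, namely in the semicontinuity arguments behind Theorem \ref{th-value-cont-strat}, so this does not conflict with the counterexamples of Section \ref{sec:Nikita-examples}). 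What the paper's heavier route buys is reusability: Proposition \ref{prop-specific-convergence-3} and Corollary \ref{cor:lim_R_in-t-} apply to general monotone approximations and are invoked again in Section \ref{sec:relax} to prove Proposition \ref{thm:conv_lipsch_gen} under the weaker assumption \ref{ass:regular_gen}, where your tailored computation would have to be redone for the pure-jump parts. One cosmetic slip: the density of $\xi^n$ on $[0,T)$ is $n(\hat\xi_t-\hat\xi_{t-1/n})$ with the convention $\hat\xi\equiv 0$ on $(-\infty,0)$; as literally written, your $\hat\xi_{(t-1/n)\vee 0}$ equals $\hat\xi_0$ for $t<1/n$ and would wrongly suppress a jump of $\xi$ at $0$. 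Your Fubini identity, derived directly from the definition of $\xi^n$, spreads that mass correctly over $[0,1/n)$, so nothing in the argument is affected.
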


The proofs of the above theorems will be conducted in the following subsections: Section \ref{sec:tech} contains a series of technical results which we then use to prove Theorem \ref{th-value-cont-strat} (in Section \ref{sec:verif}) and Proposition \ref{thm:conv_lipsch} (in Section \ref{sec:approx}). With the results from Theorem \ref{th-value-cont-strat} and Proposition \ref{thm:conv_lipsch} in place we can provide a (simple) proof of Theorem \ref{thm:main}.
\begin{proof}[{\bf Proof of Theorem \ref{thm:main}}]
Obviously, $V_* \le W_*$ and $V^* \le W^*$. However, Proposition \ref{thm:conv_lipsch} implies that 
\begin{align}\label{eq:W*}
\inf_{\xi\in\laac(\ef^1_t)} N(\xi,\zeta) = \inf_{\xi\in\laa(\ef^1_t)} N(\xi,\zeta)\quad\text{for any $\zeta\in\laa(\ef^2_t)$},
\end{align} 
so $V_* \ge W_*$ and therefore $V_* = W_*$. Then, thanks to Theorem \ref{th-value-cont-strat}, we have a sequence of inequalities which completes the proof of existence of the value
\[
W = W_* = V_* \le V^* \le W^* = W.
\]
In \eqref{eq:W*} we can choose $\zeta^*$ which is optimal for $W$ (its existence is guaranteed by Theorem \ref{th-value-cont-strat}). Then,
\[
V=V_*=\inf_{\xi\in\laa(\ef^1_t)} N(\xi,\zeta^*).
\]
Thanks to Remark \ref{rem:ineq}, we can repeat the same arguments above with the roles of the two players swapped as in \eqref{eq:swap}, i.e., the $\tau$-player ($\xi$-player) is the maximiser and the $\sigma$-player ($\zeta$-player) is the minimiser. Thus, applying again Theorem \ref{th-value-cont-strat} and Proposition \ref{thm:conv_lipsch} (with $\mcal{P}'$ as in Remark \ref{rem:ineq} in place of $\mcal{P}$) we arrive at 
\[
-V=:V'=\inf_{\zeta\in\laa(\ef^2_t)} \ee\big[\mcal{P}'(\xi^*,\zeta)\big],
\]
where $\xi^*\in\laa(\ef^1_t)$ is optimal for the maximiser in the game with value $W'=-W$. Hence $\xi^*$ is optimal for the minimiser in the original game with value $V$ and the couple $(\xi^*,\zeta^*)\in\laa(\ef^1_t)\times\laa(\ef^2_t)$ is a saddle point. The corresponding randomised stopping times, denoted $(\tau_*,\sigma_*)$, are an optimal pair for the players. 
\end{proof}


\subsection{Technical results}\label{sec:tech}

In this section we give a series of results concerning the convergence of integrals when either the integrand or the integrator converge in a suitable sense. We start by stating a technical lemma whose easy proof is omitted. 
\begin{Lemma}\label{lem:cadlag_indis}
Let $(X_t)$ and $(Y_t)$ be \cadlag measurable processes such that $X_t = Y_t$, $\prob$\as for $t \in D \subset [0, T)$ countable and dense, $X_{0-} = Y_{0-}$ and $X_T = Y_T$, $\prob$\as Then $(X_t)$ is indistinguishable from $(Y_t)$.
\end{Lemma}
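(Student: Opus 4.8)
The statement asserts that two \cadlag measurable processes that agree almost surely on a countable dense subset $D \subset [0,T)$, and whose values agree almost surely at the left-endpoint limit $0-$ and at the terminal time $T$, must in fact be indistinguishable. Let me sketch my approach.

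The plan is to exploit \cadlag regularity to propagate the almost-sure agreement from the countable set $D$ to all of $[0,T]$, and then to upgrade "for each fixed $t$, almost surely" into "almost surely, for all $t$ simultaneously" — the latter being precisely the content of indistinguishability. First I would fix the exceptional structure carefully. For each $s \in D$ let $\Omega_s := \{X_s = Y_s\}$, so $\prob(\Omega_s) = 1$. Since $D$ is countable, the intersection $\Omega_D := \bigcap_{s \in D} \Omega_s$ still has $\prob(\Omega_D) = 1$. Adjoin to this the sets $\{X_{0-} = Y_{0-}\}$ and $\{X_T = Y_T\}$, each of full measure, and call the resulting full-measure event $\Omega_0$. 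On $\Omega_0$ the two processes agree at every point of $D$, at $0-$, and at $T$.

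The key step is the pathwise argument: I claim that for every $\omega \in \Omega_0$ one has $X_t(\omega) = Y_t(\omega)$ for all $t \in [0,T]$, which immediately gives indistinguishability since $\prob(\Omega_0)=1$. Fix $\omega \in \Omega_0$ and take any $t \in [0,T)$. Since $D$ is dense, choose a sequence $(s_n) \subset D$ with $s_n \downarrow t$ and $s_n > t$. By right-continuity of the \cadlag paths $u \mapsto X_u(\omega)$ and $u \mapsto Y_u(\omega)$, we have $X_t(\omega) = \lim_{n} X_{s_n}(\omega)$ and $Y_t(\omega) = \lim_n Y_{s_n}(\omega)$; but $X_{s_n}(\omega) = Y_{s_n}(\omega)$ for each $n$ because $s_n \in D$ and $\omega \in \Omega_0$, so the two limits coincide and $X_t(\omega) = Y_t(\omega)$. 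For the terminal time $t = T$ I cannot use right-continuity from within $[0,T]$, which is exactly why the hypothesis $X_T = Y_T$ $\prob$-a.s. is imposed separately — it is built into $\Omega_0$. (The extra hypotheses at $0-$ and $T$ are needed precisely because right-density can approach interior points from the right but cannot reach the right boundary, and the left-limit behaviour at $0$ is not determined by values on $[0,T)$.)

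The main (and only genuine) obstacle is conceptual rather than technical: one must recognise that right-continuity alone suffices to recover values on all of $[0,T)$ from a right-dense set, so there is no need to invoke left limits for the interior argument; the role of the endpoint hypotheses is merely to patch the two boundary points $0-$ and $T$ that density from the right cannot cover. Because everything reduces to taking limits of an identity $X_{s_n}(\omega) = Y_{s_n}(\omega)$ along sequences in $D$, no measurability subtleties arise beyond the countable intersection used to build $\Omega_0$. This is why the authors remark that the proof is easy and omit it; my plan is simply to make the density-plus-right-continuity propagation explicit.
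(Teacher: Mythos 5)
Your proof is correct, and since the paper omits the proof of this lemma as "easy", your argument — intersecting the countably many full-measure agreement events with the two endpoint events, then propagating equality from the right-dense set $D$ to all of $[0,T)$ by right-continuity, with $T$ and $0-$ covered by hypothesis — is exactly the standard argument the authors intend. No gaps: the countable intersection keeps full measure, right-density suffices for every $t\in[0,T)$, and the endpoint hypotheses patch precisely the two points that right-approximation from $D$ cannot reach.
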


\begin{definition}\label{def:def_C}
Given a \cadlag measurable process $(X_t)$, for each $\omega\in\Omega$ we denote
\[
C_X(\omega):= \{ t\in[0,T]: X_{t-}(\omega)=X_t(\omega) \}.
\]
\end{definition}
Our next result tells us that the convergence $(\lambda\times\mathbb{P})$\ae of processes in $\laa (\mcalG_t)$ can be lifted to $\prob$\as convergence at all points of continuity of the corresponding \cadlag representatives.
\begin{Lemma}\label{lem:cadlag_convergence}
For a filtration $(\mcalG_t) \subseteq (\ef_t)$, let $(\rho^n)_{n\ge 1}\subset\laa(\mcalG_t)$ and $\rho \in \laa(\mcalG_t)$ with $\rho^n \to \rho$ $(\lambda \times \prob)$\ae as $n\to\infty$. 
Then for any \cadlag representatives $\hat \rho^n$ and $\hat \rho$ we have
\begin{equation}\label{eqn:cadlag_convergence}
\prob\Big(\big\{\omega \in \Omega:\ \lim_{n\to\infty}\hat \rho^n_t(\omega)= \hat \rho_t(\omega) \:\:\text{for all $t\in C_{\hat \rho}(\omega)$}\big\}\Big) = 1.
\end{equation}
\end{Lemma}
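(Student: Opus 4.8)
The plan is to reduce the $(\lambda\times\prob)$\ae hypothesis to a pathwise statement and then exploit the monotonicity of the \cadlag representatives. First I would note that, by Lemma \ref{lem:cadlag_indis}, all \cadlag representatives of a given element of $\laa(\mcalG_t)$ are indistinguishable; as only the countably many processes $\rho^1,\rho^2,\dots$ and $\rho$ are involved, passing from one choice of representatives $\hat\rho^n,\hat\rho$ to another alters the paths only on a $\prob$-null set and changes neither the sets $C_{\hat\rho}(\omega)$ nor the convergence assertion off that set. Hence it is enough to fix one choice and prove \eqref{eqn:cadlag_convergence} for it. Because $\rho^n=\hat\rho^n$ and $\rho=\hat\rho$ hold $(\lambda\times\prob)$\ae, the hypothesis $\rho^n\to\rho$ $(\lambda\times\prob)$\ae gives $\hat\rho^n_t(\omega)\to\hat\rho_t(\omega)$ for $(\lambda\times\prob)$\ae $(t,\omega)$. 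Applying Fubini's theorem to the measurable set $\{(t,\omega):\hat\rho^n_t(\omega)\to\hat\rho_t(\omega)\}$ produces a measurable event $\Omega_0$ with $\prob(\Omega_0)=1$ such that, for each $\omega\in\Omega_0$, the set $D_\omega:=\{t\in[0,T]:\hat\rho^n_t(\omega)\to\hat\rho_t(\omega)\}$ has full Lebesgue measure and is therefore dense in $[0,T]$. Since the probability space is complete, it suffices to show that every $\omega\in\Omega_0$ belongs to the event in \eqref{eqn:cadlag_convergence}.

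The core step is a squeeze argument using monotonicity. Fix $\omega\in\Omega_0$ and write $f_n:=\hat\rho^n_\cdot(\omega)$, $f:=\hat\rho_\cdot(\omega)$, which are non-decreasing \cadlag functions with $f_n\to f$ on the dense set $D_\omega$. Let $t\in C_{\hat\rho}(\omega)$ with $0<t<T$ and pick $s_k,u_k\in D_\omega$ with $s_k\uparrow t$, $u_k\downarrow t$. Monotonicity gives $f_n(s_k)\le f_n(t)\le f_n(u_k)$, and letting $n\to\infty$ (legitimate since $s_k,u_k\in D_\omega$) yields
\[
f(s_k)\le\liminf_{n\to\infty}f_n(t)\le\limsup_{n\to\infty}f_n(t)\le f(u_k).
\]
Sending $k\to\infty$ and using $f(s_k)\to f(t-)=f(t)$ (continuity of $f$ at $t$) and $f(u_k)\to f(t+)=f(t)$ (right-continuity) forces $f_n(t)\to f(t)$. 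The endpoints are immediate: $f_n(T)=1=f(T)$ for all $n$ by the definition of $\mcalAcirc(\mcalG_t)$, and if $0\in C_{\hat\rho}(\omega)$ then $\hat\rho_0(\omega)=\hat\rho_{0-}(\omega)=0$, while $0\le f_n(0)\le f_n(u_k)$ gives $\limsup_{n\to\infty} f_n(0)\le f(0+)=0$, so $f_n(0)\to 0$. This verifies convergence at every $t\in C_{\hat\rho}(\omega)$, completing the proof for $\omega\in\Omega_0$.

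The main obstacle is exactly the passage from $\lambda$\ae convergence in $t$ to convergence at every continuity point of the limit: pointwise a.e.\ convergence alone says nothing at a prescribed point. What rescues the argument is the monotonicity of the representatives, which permits the two-sided sandwich, together with the density of $D_\omega$ that supplies the approximating sequences $s_k\uparrow t$ and $u_k\downarrow t$. The remaining care is bookkeeping: checking measurability of $\Omega_0$ for the Fubini step, treating $t=0$ and $t=T$ separately, and invoking indistinguishability so that the conclusion is independent of the chosen representatives.
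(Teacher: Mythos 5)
Your proof is correct and follows essentially the same route as the paper's: a Fubini argument to extract pathwise convergence on a dense set of times, a monotonicity squeeze at continuity points of $\hat\rho$, and separate treatment of the endpoints $t=0$ and $t=T$. The only (immaterial) difference is the direction in which you apply Fubini — you obtain an $\omega$-dependent full-measure set of good times $D_\omega$, whereas the paper fixes a single countable dense set $D_0$ of times at which convergence holds $\prob$\as and intersects the corresponding events — and your explicit opening remark on independence of the choice of representatives, which the paper leaves implicit.
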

\begin{proof}
The $(\lambda \times \prob)$\ae convergence of $\rho^n$ to $\rho$ means that the \cadlag representatives $\hat \rho_n$ converge to $\hat \rho$ also $(\lambda \times \prob)$\ae. Hence, there is a set $D \subset [0, T]$ with $\lambda([0,T]\setminus D) = 0$ such that $\hat \rho^n_t \to \hat \rho_t$ $\prob$\as for $t \in D$. Since $\lambda([0,T]\setminus D) = 0$, there is a countable subset $D_0 \subset D$ that is dense in $[0, T]$. Define 
\[
\Omega_0 := \{ \omega \in \Omega:\ \hat \rho^n_t (\omega) \to \hat \rho_t (\omega)\:\: \text{for all $t \in D_0$}\}. 
\]
Then $\prob(\Omega_0) = 1$.

Now, fix $\omega\in\Omega_0$ and let $t\in C_{\hat \rho}(\omega) \cap (0, T)$. Take an increasing sequence $(t^1_k)_{k\ge 1}\subset D_0$ and a decreasing one $(t^2_k)_{k\ge 1}\subset D_0$, both converging to $t$ as $k\to\infty$. For each $k\ge 1$ we have
\begin{equation}\label{eqn:upward_conv}
\hat \rho_t(\omega)=\lim_{k\to\infty}\hat \rho_{t^2_k}(\omega)=\lim_{k\to\infty}\lim_{n\to\infty}\hat \rho^n_{t^2_k}(\omega)\ge \limsup_{n\to\infty}\hat \rho^n_t(\omega), 
\end{equation}
where in the final inequality we use that $\hat \rho^n_{t^2_k}(\omega)\ge \hat\rho^n_t(\omega)$ by monotonicity. By analogous arguments we also obtain
\[
\hat \rho_t(\omega)=\lim_{k\to\infty}\hat \rho_{t^1_k}(\omega)=\lim_{k\to\infty}\lim_{n\to\infty}\hat \rho^n_{t^1_k}(\omega)\le \liminf_{n\to\infty}\hat \rho^n_t(\omega), 
\]
where the first equality holds because $t\in C_{\hat \rho}(\omega)$. Combining the above we get \eqref{eqn:cadlag_convergence} (apart from $t\in\{0,T\}$) by recalling that $\omega\in\Omega_0$ and $\prob(\Omega_0)=1$. The convergence at $t = T$, irrespective of whether it belongs to $C_{\hat\rho}(\omega)$, is trivial as $\hat \rho^n_T(\omega) = \hat \rho_T(\omega) = 1$. If $0 \in C_{\hat\rho}(\omega)$, then $\hat \rho_0 (\omega) = \hat \rho_{0-}(\omega) = 0$. Inequality \eqref{eqn:upward_conv} reads $0 = \hat \rho_0 (\omega) \ge \limsup_{n \to \infty} \hat \rho^n_0(\omega)$. Since $\hat \rho^n_0(\omega) \ge 0$, this proves that $\hat\rho^n_0(\omega) \to \hat \rho_0(\omega)=0$.
\end{proof}

\begin{Lemma}\label{prop-terminal-time-jump-limit}
For a filtration $(\mcalG_t) \subseteq (\ef_t)$, let $(\rho^n)_{n\ge 1}\subset\mcalAcirc(\mcalG_t)$ and $\rho\in\mcalAcirc(\mcalG_t)$ with $\rho^n\to \rho$ $(\lambda\times\prob)$\ae as $n\to\infty$. For any $t \in [0, T]$ and any random variable $X \ge 0$ with $\ee[X]<\infty$, we have
\begin{equation*}
\limsup_{n\to\infty} \ee[X\Delta \rho^n_t]\les \ee[X\Delta \rho_t].
\end{equation*}
\end{Lemma}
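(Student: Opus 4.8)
The plan is to reduce the claim to a pathwise, almost-sure upper bound on the jump sizes, namely $\limsup_{n\to\infty}\Delta\rho^n_t\les\Delta\rho_t$ $\prob$\as, and then to pass this inequality under the expectation via a \emph{reverse} Fatou lemma. The reverse Fatou step is legitimate because each $\rho^n$ takes values in $[0,1]$ (it is non-decreasing with $\rho^n_{0-}=0$ and $\rho^n_T=1$), so $0\les\Delta\rho^n_t\les 1$ and hence $0\les X\Delta\rho^n_t\les X$ with $\ee[X]<\infty$. Consequently
\[
\limsup_{n\to\infty}\ee[X\Delta\rho^n_t]\les\ee\big[\textstyle\limsup_{n\to\infty} X\Delta\rho^n_t\big]=\ee\big[X\,\textstyle\limsup_{n\to\infty}\Delta\rho^n_t\big]\les\ee[X\Delta\rho_t],
\]
where the middle equality uses that $X\ges 0$ is a fixed nonnegative factor and the final inequality uses the pathwise bound together with $X\ges 0$.

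To establish the pathwise bound, fix $t\in(0,T)$ and exploit monotonicity of $\rho^n$: for any $s_1<t<s_2$ one has $\rho^n_{s_1}\les\rho^n_{t-}$ and $\rho^n_t\les\rho^n_{s_2}$, whence $\Delta\rho^n_t=\rho^n_t-\rho^n_{t-}\les\rho^n_{s_2}-\rho^n_{s_1}$. The idea is to take $s_1,s_2$ to be continuity points of $\rho$ so that Lemma \ref{lem:cadlag_convergence} applies (recall $\mcalAcirc(\mcalG_t)\subset\laa(\mcalG_t)$, each process being its own \cadlag representative): there is a single full-measure set on which $\rho^n_{s}(\omega)\to\rho_s(\omega)$ for every $s\in C_\rho(\omega)$. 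Since $\rho(\omega)$ is \cadlag and non-decreasing it has at most countably many jumps, so $C_\rho(\omega)$ is dense and we may select continuity points $s_1\uparrow t$ and $s_2\downarrow t$. For each such pair, taking $\limsup_n$ gives $\limsup_{n\to\infty}\Delta\rho^n_t(\omega)\les\rho_{s_2}(\omega)-\rho_{s_1}(\omega)$; letting $s_1\uparrow t$ and $s_2\downarrow t$ along $C_\rho(\omega)$ sends the right-hand side to $\rho_t(\omega)-\rho_{t-}(\omega)=\Delta\rho_t(\omega)$, which is the claimed bound.

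The endpoints require only a one-sided version of this argument. At $t=T$ we have $\Delta\rho^n_T=1-\rho^n_{T-}\les 1-\rho^n_{s_1}=\rho^n_T-\rho^n_{s_1}$ for $s_1<T$, and letting $s_1\uparrow T$ through $C_\rho(\omega)$ yields $\limsup_n\Delta\rho^n_T\les 1-\rho_{T-}=\Delta\rho_T$ (no approximation from the right is needed since $\rho^n_T\equiv 1\equiv\rho_T$). Symmetrically, at $t=0$ we have $\Delta\rho^n_0=\rho^n_0\les\rho^n_{s_2}$ for $s_2>0$, and $s_2\downarrow 0$ gives $\limsup_n\Delta\rho^n_0\les\rho_0=\Delta\rho_0$ (using $\rho^n_{0-}\equiv 0\equiv\rho_{0-}$).

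I expect the only delicate point to be the bookkeeping in the pathwise step: the set of continuity points $C_\rho(\omega)$ depends on $\omega$, so one must first invoke Lemma \ref{lem:cadlag_convergence} to obtain a single full-measure set on which convergence holds \emph{simultaneously} at all continuity points, and only then select the squeezing sequences $s_1\uparrow t$, $s_2\downarrow t$ separately for each $\omega$. Everything else is routine monotonicity and the reverse Fatou lemma.
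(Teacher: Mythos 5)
Your proof is correct, but it takes a somewhat different route from the paper's. The paper never establishes your pathwise inequality $\limsup_{n\to\infty}\Delta\rho^n_t\les\Delta\rho_t$, $\prob$\as; instead it works entirely at the level of expectations: from the $(\lambda\times\prob)$\ae convergence it extracts (via a Fubini-type argument) a \emph{deterministic} sequence $\delta_m\downarrow 0$ such that $\rho^n_{t\pm\delta_m}\to\rho_{t\pm\delta_m}$ $\prob$\as for every $m$, writes $\ee[X\Delta\rho_t]=\lim_{m}\lim_{n}\ee[X(\rho^n_{t+\delta_m}-\rho^n_{t-\delta_m})]$ by dominated convergence, and then bounds the increment from below by $\Delta\rho^n_t$ using monotonicity \emph{inside} the expectation. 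You instead invoke Lemma \ref{lem:cadlag_convergence} to get simultaneous convergence at all ($\omega$-dependent) continuity points of $\rho$, squeeze the jump between continuity points $s_1\uparrow t$, $s_2\downarrow t$ pathwise, and only then integrate the resulting a.s.\ bound via the reverse Fatou lemma, which is indeed legitimate thanks to the uniform domination $0\les X\Delta\rho^n_t\les X$ with $\ee[X]<\infty$. The underlying squeeze idea is the same, but your version is more modular and yields a strictly stronger intermediate statement (the pathwise limsup bound), at the cost of the bookkeeping with the $\omega$-dependent set $C_\rho(\omega)$ that you correctly flag; the paper's version is self-contained (it does not rely on Lemma \ref{lem:cadlag_convergence}), needs convergence only at countably many deterministic times, and replaces reverse Fatou by plain dominated convergence. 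Your one-sided treatment of the endpoints $t\in\{0,T\}$, using $\rho^n_T\equiv\rho_T\equiv 1$ and $\rho^n_{0-}\equiv\rho_{0-}\equiv 0$, matches the paper's remark that those cases are simplified versions of the interior argument.
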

\begin{proof}
Fix $t \in (0, T)$. Using $(\lambda\times\prob)$\ae convergence of $\rho^{n}$ to $\rho$, i.e., that $\int_0^T \prob\big(\lim_{n\to\infty}\rho^{n}_t=\rho_t\big) \ud t=T$, there is a decreasing sequence $\delta_m \to 0$ such that
\begin{equation*}
\lim_{n\to\infty} \rho^{n}_{t-\delta_m} = \rho_{t-\delta_m},
\qquad
\lim_{n\to\infty} \rho^{n}_{t+\delta_m} = \rho_{t+\delta_m},\qquad \prob\as
\end{equation*}
Then, by the dominated convergence theorem,
\begin{align*}
\ee[X\Delta \rho_t]&=\lim_{m \to \infty} \ee[ X (\rho_{t + \delta_m} - \rho_{t - \delta_m})]\\
&=
\lim_{m\to\infty}\lim_{n\to\infty} \ee[X(\rho^{n}_{t+\delta_m}-\rho^{n}_{t-\delta_m})]\\
&=
\lim_{m\to\infty}\limsup_{n\to\infty} \ee[X(\rho^{n}_{t+\delta_m}-\rho^{n}_{t-\delta_m})]\\
&=
\lim_{m\to\infty}\limsup_{n\to\infty} \ee[X(\rho^{n}_{t+\delta_m}-\rho^{n}_{t} + \rho^{n}_{t-} - \rho^{n}_{t-\delta_m} + \Delta \rho^{n}_{t})] \ges \limsup_{n\to\infty} \ee[X \Delta \rho^{n}_t],
\end{align*}
where the last inequality is due to $t\mapsto\rho^{n}_t$ being non-decreasing. This finishes the proof for $t\in(0,T)$.
The proof for $t \in\{ 0, T\}$ is a simplified version of the argument above, since $\rho^n_T=\rho_T=1$ and $\rho^n_{0-}=\rho_{0-}=0$, $\prob$\as
\end{proof}

We need to consider a slightly larger class of processes $\tlmcalAcirc(\mcalG_t) \supset \mcalAcirc(\mcalG_t)$ defined by
\begin{align*}
\tlmcalAcirc(\mcalG_t):=&\,\{\rho\,:\,\text{$\rho$ is $(\mcalG_t)$-adapted with $t\mapsto\rho_t(\omega)$ \cadlag,}\\
&\qquad\,\text{non-decreasing, $\rho_{0-}(\omega)=0$ and $\rho_T(\omega)\le 1$ for all $\omega\in\Omega$}\}.
\end{align*}

\begin{Proposition}\label{prop:r-convergence}
For a filtration $(\mcalG_t) \subseteq (\ef_t)$, let $(\rho^n)_{n\ge 1}\subset\tlmcalAcirc(\mcalG_t)$ and $\rho\in\tlmcalAcirc(\mcalG_t)$. Assume
\[
\prob\Big(\big\{\omega \in \Omega:\ \lim_{n\to\infty}\rho^n_t(\omega)=\rho_t(\omega)\:\:\text{for all $t\in C_\rho(\omega)\cup\{T\}$} \big\} \Big) = 1.
\]
Then for any $X\in\mcalL$ that is also $(\mathcal F_t)$-adapted and regular, we have
\begin{equation}\label{eqn:b_t}
\lim_{n\to\infty}\ee\bigg[\int_{[0, T]} X_t \ud\rho^n_t\bigg] = \ee\bigg[\int_{[0, T]} X_t \ud\rho_t\bigg].
\end{equation}
\end{Proposition}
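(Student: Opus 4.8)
The plan is to establish the single inequality
\begin{equation}\tag{$\ast$}
\limsup_{n\to\infty}\ee\Big[\int_{[0,T]}X_t\,\ud\rho^n_t\Big]\le\ee\Big[\int_{[0,T]}X_t\,\ud\rho_t\Big]
\end{equation}
for every $(\ef_t)$-adapted regular $X\in\mcalL$, and then to recover \eqref{eqn:b_t} by applying $(\ast)$ to $-X$, which is again adapted and regular, so that the reverse bound for the $\liminf$ follows and the two combine into the asserted limit. All integrals are read through \cadlag representatives and are dominated by $\int_{[0,T]}|X_t|\,\ud\rho^n_t\le\sup_t|X_t|=:G$, with $G\in L^1(\prob)$ by \ref{eq-integrability-cond} and $\rho^n_T\le1$; this legitimises the (reverse) Fatou and dominated-convergence steps below.

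The first step is to read the hypothesis as pathwise weak convergence of measures. Off a $\prob$-null set, $\rho^n_t(\omega)\to\rho_t(\omega)$ for all $t\in C_\rho(\omega)\cup\{T\}$; since $\rho^n_{0-}=0$ the total masses $\rho^n_T\to\rho_T$ converge, and convergence of the distribution functions at the continuity points of the limit together with convergence of total mass is exactly weak convergence $\ud\rho^n(\omega)\Rightarrow\ud\rho(\omega)$ of finite measures on the compact interval $[0,T]$ (mass at an atom of $\rho$ being captured as a limit of mass of $\rho^n$ on shrinking neighbourhoods). For a \emph{continuous} bounded integrand this would give pathwise convergence of the integrals and, by dominated convergence, of their expectations. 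The difficulty is that $X$ is only \cadlag, hence neither upper- nor lower-semicontinuous, so $\int X\,\ud\rho^n$ need not converge pathwise at times where $X$ and $\rho$ jump simultaneously: sandwiching $X$ between $X^*_t:=\max(X_t,X_{t-})$ and $X_{*,t}:=\min(X_t,X_{t-})$ and using the portmanteau theorem with reverse Fatou only yields $\ee[\int X_*\,\ud\rho]\le\liminf\le\limsup\le\ee[\int X^*\,\ud\rho]$, leaving an irreducible error $\ee[\sum_t|\Delta X_t|\,\Delta\rho_t]$ equal to the common-jump contribution.

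The core of the proof is therefore the atoms of $\rho$, and closing the gap above requires exploiting that the approximants $\rho^n$ are \emph{adapted}. I would split the jump times of $\rho$ into an accessible family, exhausted by predictable stopping times $(\eta_k)$, and a totally inaccessible family $(S_j)$. Regularity \eqref{eq:cond-reg} is equivalent to ${}^pX=X_-$, i.e.\ $\ee[X_{\eta}\mid\ef_{\eta-}]=X_{\eta-}$ at predictable $\eta$; hence for the $\ef_{\eta_k-}$-measurable part of the atom mass the values $X_{\eta_k-}$ and $X_{\eta_k}$ are interchangeable in the mean, so whether an adapted $\rho^n$ releases that part just before or exactly at $\eta_k$ makes no difference to the expectation, while the genuinely new, $\ef_{\eta_k}$-measurable part of the mass can be released only at $\eta_k$ and thus contributes $X_{\eta_k}$; in both cases the expected contribution is $\ee[X_{\eta_k}\Delta\rho_{\eta_k}]$. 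At a totally inaccessible $S_j$ there is no announcing sequence of stopping times, so an adapted $\rho^n$ cannot concentrate the atom's mass in the strict left-neighbourhood of $S_j$: the mass must be released at or after $S_j$, so the limit sees $X_{S_j}$, again matching $\ee[X_{S_j}\Delta\rho_{S_j}]$. I expect to make these statements rigorous through predictable projections together with the atomic one-sided estimate of Lemma \ref{prop-terminal-time-jump-limit}, applied along the countable set of fixed times of discontinuity of $\rho$ and, via a predictable-section argument, along the exhausting sequences $(\eta_k)$ and $(S_j)$.

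The totally inaccessible atoms are the main obstacle. The required fact—that an adapted increasing family converging at the continuity points of $\rho$ cannot feed the mass of a totally inaccessible atom from the strict left—is genuinely probabilistic (it is false for non-adapted approximations, where the left ramp realises $X_{S_j-}$) and is exactly where the structure underlying the Meyer/Baxter--Chacon topology must be invoked. Once the atomic contributions are identified with $\sum_k\ee[X_{\eta_k}\Delta\rho_{\eta_k}]+\sum_j\ee[X_{S_j}\Delta\rho_{S_j}]=\ee[\sum_t X_t\Delta\rho_t]$ and the continuous part of $\ud\rho^n$ is dealt with by the pathwise weak convergence and dominated convergence, adding the two contributions gives $(\ast)$; applying $(\ast)$ to $-X$ then yields \eqref{eqn:b_t}.
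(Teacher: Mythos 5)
Your reduction to the one-sided bound $(\ast)$ and your first step (reading the hypothesis as pathwise weak convergence of the measures $\ud\rho^n(\omega)$, which settles the case of continuous integrands by dominated convergence) are sound, and they coincide with the opening of the paper's own proof. You have also correctly diagnosed where the difficulty sits: common jumps of $X$ and $\rho$, and the fact that adaptedness of the $\rho^n$ is what must rule out mass approaching an atom of $\rho$ from the strict left. But from that point on the proposal is a plan rather than a proof, and the step you defer is the entire content of the proposition. The claim that an adapted sequence $\rho^n$ cannot feed a totally inaccessible atom from the left is asserted and then explicitly postponed (``exactly where the structure underlying the Meyer/Baxter--Chacon topology must be invoked''); this is circular, because that structural fact is essentially the statement being proved. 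The tools you cite do not deliver it: Lemma \ref{prop-terminal-time-jump-limit} is proved for \emph{deterministic} times only (its proof chooses deterministic $\delta_m$ so that $t\pm\delta_m$ are a.s.\ convergence points), and at a totally inaccessible time there is, by definition, no announcing sequence of stopping times along which such a two-sided argument could be run; predictable section theorems produce stopping times inside predictable sets, not convergence statements for the measures $\ud\rho^n$. The predictable-time half of your sketch has the same status: the split of $\Delta\rho_{\eta_k}$ into an ``$\ef_{\eta_k-}$-measurable part'' and a ``genuinely new part'' is never defined, and no estimate is given that decomposes $\int X\,\ud\rho^n$ into a piece converging to the continuous part of $\ud\rho$ plus pieces converging to the individual (random, countably many) atoms.

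For comparison, the paper closes exactly this gap with no jump-time analysis at all: having proved the statement for continuous, not necessarily adapted, integrands, it invokes Bismut's theorem \cite[Theorem 3]{Bismut1978} to write the regular adapted $X$ as the $(\ef_t)$-optional projection of a continuous process $\tilde X\in\mcalL$, and then uses the projection identity \cite[Thm VI.57]{DellacherieMeyer},
\begin{equation*}
\ee\bigg[\int_{[0,T]} X_t\,\ud\rho^n_t\bigg]=\ee\bigg[\int_{[0,T]}\tilde X_t\,\ud\rho^n_t\bigg],
\qquad
\ee\bigg[\int_{[0,T]} X_t\,\ud\rho_t\bigg]=\ee\bigg[\int_{[0,T]}\tilde X_t\,\ud\rho_t\bigg],
\end{equation*}
valid precisely because $\rho^n$ and $\rho$ are optional increasing processes. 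Adaptedness is used exactly once, in this identity; it is the rigorous form of your intuition about predictable versus totally inaccessible times. To salvage your route you would have to prove your key claim from scratch, which amounts to reproving the convergence theorem of Meyer; the optional-projection argument is the shortcut that avoids it.
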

\begin{proof}
Let us first assume that $(X_t) \in \mcalL$ has continuous trajectories (but is not necessarily adapted). If we prove that 
\begin{equation}\label{eqn:b_t_omega}
\lim_{n\to\infty}\int_{[0, T]} X_t(\omega) \ud \rho^n_t(\omega)= \int_{[0, T]} X_t(\omega) \ud \rho_t (\omega),\quad\text{for $\mathbb{P}$\ae $\omega\in\Omega$,}
\end{equation}
then the result in \eqref{eqn:b_t} will follow by the dominated convergence theorem. By assumption there is $\Omega_0\subset \Omega$ with $\prob(\Omega_0)=1$ and such that $\rho^n_t(\omega)\to\rho_t(\omega)$ at all points of continuity of $t\mapsto \rho_t(\omega)$ and at the terminal time $T$ for all $\omega \in \Omega_0$. Since $\ud\rho^n_t(\omega)$ and $\ud\rho_t(\omega)$ define positive measures on $[0,T]$ for each $\omega\in\Omega_0$, the convergence of integrals in \eqref{eqn:b_t_omega} can be deduced from the weak convergence of finite measures, see \cite[Remark III.1.2]{Shiryaev}. Indeed, if $\omega\in\Omega_0$ is such that $\rho_T(\omega)=0$, the right-hand side of \eqref{eqn:b_t_omega} is zero and we have
\begin{equation*}
\limsup_{n\to\infty}\left|\int_{[0,T]} X_t(\omega) \ud \rho^n_t(\omega)\right| \le \limsup_{n\to\infty}\sup_{t \in [0, T]} |X_t(\omega)| \rho^n_T(\omega)= 0,
\end{equation*}
where we use $X\in\mcalL$ to ensure that $\sup_{t \in [0, T]} |X_t(\omega)|<\infty$. If instead, $\omega\in\Omega_0$ is such that $\rho_T(\omega)>0$, then for all sufficiently large $n$'s, we have $\rho^n_T(\omega) > 0$ and $t \mapsto \rho^n_t(\omega) / \rho^n_T(\omega)$ define cumulative distribution functions (cdfs) converging pointwise to $\rho_t(\omega) / \rho_T(\omega)$ at the points of continuity of $\rho_t(\omega)$. Since $t \mapsto X_t(\omega)$ is continuous, \cite[Thm III.1.1]{Shiryaev} justifies
\begin{align*}
\lim_{n\to\infty}\int_{[0, T]} X_t (\omega) \ud \rho^{n}_t(\omega) =&\,\lim_{n\to\infty} \rho^{n}_T (\omega) \int_{[0, T]} X_t(\omega) \ud\left(\frac{\rho^{n}_t(\omega)}{\rho^{n}_T(\omega)}\right) \\
=&\,\rho_T(\omega) \int_{[0, T]} X_t(\omega) \ud\left(\frac{\rho_t(\omega)}{\rho_T(\omega)}\right) = \int_{[0, T]} X_t(\omega) \ud \rho_t(\omega).
\end{align*}

Now we drop the continuity assumption on $X$. We turn our attention to \cadlag, $(\ef_t)$-adapted and regular $(X_t) \in \mcalL$. By \cite[Theorem 3]{Bismut1978} there is $(\tilde X_t) \in \mcalL$ with continuous trajectories (not necessarily adapted) such that $(X_t)$ is an $(\ef_t)$-optional projection of $(\tilde X_t)$. From the first part of the proof we know that \eqref{eqn:b_t} holds for $(\tilde X_t)$. To show that it holds for $(X_t)$ it is sufficient to notice that $(\rho^n_t)$ and $(\rho_t)$ are $(\ef_t)$-optional processes, and apply \cite[Thm VI.57]{DellacherieMeyer} to obtain
\[
 \ee\bigg[\int_{[0, T]} X_t \ud \rho^n_t\bigg] = \ee\bigg[\int_{[0, T]} \tilde X_t \ud \rho^n_t\bigg]\qquad \text{and} \qquad 
 \ee\bigg[\int_{[0, T]} X_t \ud \rho_t\bigg] = \ee\bigg[\int_{[0, T]} \tilde X_t \ud \rho_t\bigg].
\]
\end{proof}

\begin{remark}
The statement of Proposition \ref{prop:r-convergence} can be strengthened to include all processes in $\mcalL$ which are regular but not necessarily $(\ef_t)$-adapted. One can prove it by adapting arguments of the proof of \cite[Thm.\ 3]{Meyer}. 
\end{remark}

\begin{Proposition}\label{prop-specific-convergence-2}
For a filtration $(\mcalG_t) \subseteq (\ef_t)$, let $\chi\in \mcalAcirc(\mcalG_t)$ and $\rho\in\laac(\mcalG_t)$ and consider $X\in\mcalL$ which is $(\ef_t)$-adapted and regular. If $(\rho^n)_{n\ge 1}\subset\mathcal{A}_{ac}(\mcalG_t)$ converges $(\lambda \times \prob)$\ae to $\rho$ as $n\to\infty$, then
\begin{equation}\label{eq:lim00}
\lim_{n\to\infty}\ee\bigg[\int_{[0, T]} X_t(1-\chi_{t-})\ud\rho^n_t\bigg]=\ee\bigg[\int_{[0, T]} X_t(1-\chi_{t-})\ud\rho_t\bigg].
\end{equation}
\end{Proposition}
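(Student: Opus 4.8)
The obstacle here is that Proposition~\ref{prop:r-convergence} is stated for a \emph{regular} adapted integrand, whereas our integrand is the product $X_t(1-\chi_{t-})$: the left-continuous factor $(1-\chi_{t-})$ destroys regularity. Indeed, the càdlàg modification $X(1-\chi)$ already fails \eqref{eq:cond-reg} at predictable jump times $\eta$ of $\chi$, since the increment contains a term $-X_\eta\Delta\chi_\eta$ and $\ee[X_\eta\Delta\chi_\eta\mid\ef_{\eta-}]$ need not vanish. So one cannot feed $X_t(1-\chi_{t-})$ into Proposition~\ref{prop:r-convergence} directly, and the plan is instead to absorb the offending factor into the \emph{integrator}.

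Concretely, I would set $\tilde\rho^n_t:=\int_{[0,t]}(1-\chi_{s-})\,\ud\rho^n_s$ and $\tilde\rho_t:=\int_{[0,t]}(1-\chi_{s-})\,\ud\rho_s$. Since $0\le 1-\chi_{s-}\le 1$, these processes are càdlàg, non-decreasing, $(\mcalG_t)$-adapted (a predictable integrand against an adapted increasing integrator), vanish at $0-$, and satisfy $\tilde\rho^n_T\le\rho^n_T=1$; hence they belong to $\tlmcalAcirc(\mcalG_t)$, which is precisely the class for which Proposition~\ref{prop:r-convergence} is designed (note $\tilde\rho^n_T$ may be strictly below $1$, so membership in the larger class $\tlmcalAcirc$, not $\mcalAcirc$, is essential). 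By construction $\int_{[0,T]}X_t\,\ud\tilde\rho^n_t=\int_{[0,T]}X_t(1-\chi_{t-})\,\ud\rho^n_t$ and likewise for $\tilde\rho$, so \eqref{eq:lim00} reduces to $\lim_n\ee\big[\int_{[0,T]}X_t\,\ud\tilde\rho^n_t\big]=\ee\big[\int_{[0,T]}X_t\,\ud\tilde\rho_t\big]$, which is exactly the conclusion of Proposition~\ref{prop:r-convergence} for the regular adapted $X$ — provided I verify its pointwise-convergence hypothesis for the new integrators $\tilde\rho^n\to\tilde\rho$.

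Verifying that hypothesis is the crux of the argument and the step I expect to be the main obstacle, because it requires transferring the pointwise convergence of $\rho^n$ through an integrand that is merely left-continuous. Since $\rho^n,\rho\in\laac(\mcalG_t)$ have representatives that are absolutely continuous (hence continuous) on $[0,T)$, so does $\tilde\rho$, whence $C_{\tilde\rho}(\omega)\cup\{T\}=[0,T]$ and I must establish $\tilde\rho^n_t\to\tilde\rho_t$ for \emph{every} $t$, $\prob$\as The clean route is integration by parts for Lebesgue--Stieltjes integrals, which (using $\rho^n_0=\rho_0=0$, so no boundary term at $0$) gives $\tilde\rho^n_t=(1-\chi_t)\rho^n_t+\int_{(0,t]}\rho^n_s\,\ud\chi_s$ and hence $\tilde\rho^n_t-\tilde\rho_t=(1-\chi_t)(\rho^n_t-\rho_t)+\int_{(0,t]}(\rho^n_s-\rho_s)\,\ud\chi_s$. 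Lemma~\ref{lem:cadlag_convergence} upgrades the $(\lambda\times\prob)$\ae convergence $\rho^n\to\rho$ to pointwise convergence $\rho^n_s\to\rho_s$ for all $s\in C_\rho(\omega)\supseteq[0,T)$ on a full-measure event; combined with $\rho^n_T=\rho_T=1$ this annihilates the first term, while $|\rho^n_s-\rho_s|\le1$ together with $\ud\chi(\omega)$ being a fixed finite measure lets dominated convergence annihilate the integral term (the possible atom of $\chi$ at $T$ contributes $\rho^n_T-\rho_T=0$). The decisive point is that integration by parts shifts the increments of $\rho^n-\rho$ onto the fixed measure $\ud\chi$, where convergence no longer depends on the irregularity of the original integrand; with the hypothesis thus verified, Proposition~\ref{prop:r-convergence} delivers \eqref{eq:lim00}.
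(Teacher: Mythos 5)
Your proposal is correct and follows essentially the same route as the paper's own proof: you absorb the left-continuous factor $(1-\chi_{t-})$ into the integrator by defining $\tilde\rho^n_t=\int_{[0,t]}(1-\chi_{s-})\,\ud\rho^n_s$ (the paper's $R^n_t$), observe these lie in $\tlmcalAcirc(\mcalG_t)$, verify the pointwise-convergence hypothesis of Proposition \ref{prop:r-convergence} via Lemma \ref{lem:cadlag_convergence}, integration by parts and dominated convergence, and then apply Proposition \ref{prop:r-convergence} to the regular integrand $X$. The only cosmetic difference is that the paper takes limits in the integration-by-parts identity and integrates by parts again to identify $R_t$, whereas you estimate the difference $\tilde\rho^n_t-\tilde\rho_t$ directly; the substance is identical.
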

\begin{proof}
Define absolutely continuous adapted processes
\begin{equation*}
R^n_t = \int_{[0, t]} (1-\chi_{s-})d\rho^n_s\quad\text{and}\quad R_t = \int_{[0, t]} (1-\chi_{s-})d\rho_s,
\end{equation*}
so that
\begin{equation}\label{eq:intdR}
\int_{[0, T]} X_t(1-\chi_{t-})d\rho^n_t=\int_{[0, T]} X_tdR^n_t\quad\text{and}\quad \int_{[0, T]} X_t(1-\chi_{t-})d\rho_t=\int_{[0, T]} X_tdR_t.
\end{equation}
With no loss of generality we can consider the absolutely continuous representatives of $\rho$ and $\rho^n$ from the class $\mcalAcircAc(\mcalG_t)$ in the definition of all the integrals above (which we still denote by $\rho$ and $\rho^n$ for simplicity). In light of this observation it is clear that $(R^n)_{n\ge 1}\subset\tlmcalAcirc(\mcalG_t)$ and $R\in \tlmcalAcirc(\mcalG_t)$. The idea is then to apply Proposition \ref{prop:r-convergence} to the integrals with $R^n$ and $R$ in \eqref{eq:intdR}.

Thanks to Lemma \ref{lem:cadlag_convergence} and recalling that $\rho^n_T = \rho_T = 1$, the set
\[
\Omega_0 = \big\{\omega\in\Omega:\lim_{n\to\infty}\rho^n_t(\omega)= \rho_t(\omega) \text{ for all $t \in [0, T]$}\big\}
\]
has full measure, i.e., $\prob(\Omega_0)=1$. For any $\omega \in \Omega_0$ and $t\in[0,T]$, integrating by parts (see, e.g., \cite[Prop. 4.5, Chapter 0]{revuzyor}), using the dominated convergence theorem and then again integrating by parts give
\begin{equation}\label{eqn:conv_R}
\lim_{n \to \infty} R^n_t= \lim_{n \to \infty} \bigg[(1-\chi_{t})\rho^n_t - \int_{[0,t]} \rho^n_sd(1-\chi_{s}) \bigg]
= (1-\chi_{t})\rho_t - \int_{[0,t]} \rho_sd(1-\chi_{s})=R_t.
\end{equation}
Hence $R^n$ and $R$ satisfy the assumptions of Proposition \ref{prop:r-convergence} and we can conclude that \eqref{eq:lim00} holds.
\end{proof}

\begin{cor}\label{cor-specific-convergence-2}
Under the assumptions of Proposition \ref{prop-specific-convergence-2}, we have 
\begin{equation*}
\lim_{n\to\infty}\ee\bigg[\int_{[0, T)} X_t(1-\chi_{t})\ud\rho^n_t + X_T \Delta \chi_T \Delta \rho^n_T\bigg]=\ee\bigg[\int_{[0, T)} X_t(1-\chi_{t})\ud\rho_t + X_T \Delta \chi_T \Delta \rho_T\bigg].
\end{equation*}
\end{cor}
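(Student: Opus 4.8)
The plan is to show that, for each $n$ and also for $\rho$ itself, the pointwise rewriting
\[
\int_{[0,T]} X_t(1-\chi_{t-})\ud\rho^n_t = \int_{[0,T)} X_t(1-\chi_t)\ud\rho^n_t + X_T\,\Delta\chi_T\,\Delta\rho^n_T
\]
holds, so that the expression inside the limit in the Corollary is literally the one controlled by Proposition \ref{prop-specific-convergence-2}; the result then follows simply by taking expectations and passing to the limit.

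To establish this identity, first I would use the elementary relation $1-\chi_{t-}=(1-\chi_t)+\Delta\chi_t$ to split the integral over $[0,T]$ into a $(1-\chi_t)$-part and a $\Delta\chi_t$-part. For the $\Delta\chi_t$-part I would invoke that $\rho^n$ admits an absolutely continuous representative in $\mcalAcircAc(\mcalG_t)$, so the measure $\ud\rho^n_t$ is absolutely continuous on $[0,T)$ and can carry an atom only at $T$. Since the \cadlag path $t\mapsto\chi_t(\omega)$ has at most countably many jumps, $\Delta\chi_t\ne 0$ only on a countable set, to which the absolutely continuous part of $\ud\rho^n_t$ assigns zero mass; hence the sole surviving contribution is the atom at $T$, giving $\int_{[0,T]} X_t\Delta\chi_t\ud\rho^n_t = X_T\Delta\chi_T\Delta\rho^n_T$. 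This is precisely the mechanism already exploited in the proof of Corollary \ref{cor:j}.

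For the $(1-\chi_t)$-part I would use that $\chi\in\mcalAcirc(\mcalG_t)$ forces $\chi_T=1$, so the factor $(1-\chi_T)=0$ annihilates the atom of $\ud\rho^n_t$ at $T$ and leaves $\int_{[0,T]} X_t(1-\chi_t)\ud\rho^n_t = \int_{[0,T)} X_t(1-\chi_t)\ud\rho^n_t$. Combining the two parts yields the claimed identity for $\rho^n$, and the verbatim computation (using the absolutely continuous representative of $\rho\in\laac(\mcalG_t)$, exactly as in the proof of Proposition \ref{prop-specific-convergence-2}) gives the same identity with $\rho$ in place of $\rho^n$. Substituting both into the conclusion of Proposition \ref{prop-specific-convergence-2} then completes the proof.

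I expect no genuine analytical obstacle here: once the atom at $T$ is isolated, the argument is purely algebraic. The only step requiring a moment's care is confirming that the absolutely continuous part of $\ud\rho^n_t$ on $[0,T)$ ignores the (countable) jump set of $\chi$, so that no spurious contributions from jumps of $\chi$ inside $[0,T)$ survive — but this is immediate from the absolute continuity built into the definition of $\mcalAcircAc(\mcalG_t)$.
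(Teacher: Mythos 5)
Your proposal is correct and follows essentially the same route as the paper's own proof: the paper likewise rewrites $\int_{[0,T]}X_t(1-\chi_{t-})\ud\rho^n_t$ as $\int_{[0,T]}X_t(1-\chi_t)\ud\rho^n_t + X_T\Delta\chi_T\Delta\rho^n_T$ (using that $\rho^n$ and $\rho$ are continuous on $[0,T)$, so jumps of $\chi$ inside $[0,T)$ carry no mass), drops the endpoint of the first integral via $\chi_T=1$, and then invokes Proposition \ref{prop-specific-convergence-2}. The only difference is that you spell out the splitting $1-\chi_{t-}=(1-\chi_t)+\Delta\chi_t$ explicitly, which the paper leaves implicit.
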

\begin{proof}
Recall that $\rho^n$ and $\rho$ are continuous everywhere apart from $T$. Hence, we can rewrite the left- and right-hand side of \eqref{eq:lim00} as
\[
\int_{[0, T]} X_t(1-\chi_{t-})\ud\rho^n_t = \int_{[0, T]} X_t(1-\chi_{t})\ud\rho^n_t + X_T \Delta \chi_T \Delta \rho^n_T
\]
and
\[
\int_{[0, T]} X_t(1-\chi_{t-})\ud\rho_t = \int_{[0, T]} X_t(1-\chi_{t})\ud\rho_t + X_T \Delta \chi_T \Delta \rho_T\,,
\]
respectively. It remains to note that $\int_{[0, T]} X_t(1-\chi_{t})\ud\rho^n_t = \int_{[0, T)} X_t(1-\chi_{t})\ud\rho^n_t$ because $\chi_T = 1$.
\end{proof}

We close this technical section with a similar result to the above but for approximations which are needed for the proof of Proposition \ref{thm:conv_lipsch}. The next proposition is tailored for our specific type of regularisation of processes in $\laa(\ef^1_t)$. Notice that the left hand side of \eqref{eq:lim-in-A} features $\chi_{t-}$ while the right hand side has $\chi_t$.

\begin{Proposition}\label{prop-specific-convergence-3}
For a filtration $(\mcalG_t) \subseteq (\ef_t)$, let $\chi,\rho \in\mcalAcirc (\mcalG_t)$, $(\rho^n)_{n\ge 1}\subset\mcalAcirc (\mcalG_t)$ and consider $X\in\mcalL$ which is $(\ef_t)$-adapted and regular. Assume the sequence $(\rho^n)_{n\ge 1}$ is non-decreasing and for $\prob$\ae $\omega\in\Omega$
\begin{align}\label{eq:conv-rho}
\lim_{n\to\infty}\rho^{n}_t(\omega)=\rho_{t-}(\omega)\:\: \text{for all $t\in[0,T)$}.
\end{align}
Then
\begin{equation}\label{eq:lim-in-A}
\lim_{n\to\infty}\ee\bigg[\int_{[0, T)} X_t(1-\chi_{t-})\ud\rho^n_t\bigg]= \ee\bigg[\int_{[0, T)} X_t(1-\chi_{t})\ud\rho_t\bigg],
\end{equation}
and for $\prob$\ae $\omega \in \Omega$
\begin{equation}\label{eqn:lim-in-t-}
\lim_{n\to\infty}\rho^n_{t-}(\omega)=\rho_{t-}(\omega)\quad \text{for all $t \in [0, T]$}.
\end{equation}
\end{Proposition}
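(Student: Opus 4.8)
The plan is to prove the pointwise statement \eqref{eqn:lim-in-t-} first, since it will be needed for the boundary analysis of \eqref{eq:lim-in-A}, and then to reduce \eqref{eq:lim-in-A} to Proposition \ref{prop:r-convergence} by introducing the integrator processes generated by the measures $(1-\chi_{s-})\,\ud\rho^n_s$. Throughout I work on the full-measure event on which \eqref{eq:conv-rho} holds.

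For \eqref{eqn:lim-in-t-}, I would exploit the two monotonicities available: each $s\mapsto\rho^n_s(\omega)$ is non-decreasing and the family $(\rho^n)$ is non-decreasing in $n$. Fixing $t\in(0,T]$ and using $\rho^n_{t-}(\omega)=\sup_{s<t}\rho^n_s(\omega)$, one gets
\[
\lim_{n\to\infty}\rho^n_{t-}(\omega)=\sup_{n}\sup_{s<t}\rho^n_s(\omega)=\sup_{s<t}\sup_{n}\rho^n_s(\omega)=\sup_{s<t}\rho_{s-}(\omega)=\rho_{t-}(\omega),
\]
where the interchange of suprema is automatic, the penultimate equality is \eqref{eq:conv-rho}, and the last uses left-continuity of $s\mapsto\rho_{s-}$; the case $t=0$ is trivial. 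Note that the monotonicity in $n$ is essential here.

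For \eqref{eq:lim-in-A}, set $R^n_t:=\int_{[0,t]}(1-\chi_{s-})\,\ud\rho^n_s$ and $R_t:=\int_{[0,t]}(1-\chi_s)\,\ud\rho_s$, both in $\tlmcalAcirc(\mcalG_t)$, so the left- and right-hand integrals of \eqref{eq:lim-in-A} are $\int_{[0,T)}X_t\,\ud R^n_t$ and $\int_{[0,T)}X_t\,\ud R_t$. Integration by parts, exactly as in the proof of Proposition \ref{prop-specific-convergence-2}, gives $R^n_t=(1-\chi_t)\rho^n_t-\int_{[0,t]}\rho^n_s\,\ud(1-\chi_s)$, so by \eqref{eq:conv-rho} and dominated convergence, for every $t<T$,
\[
\lim_{n\to\infty}R^n_t=(1-\chi_t)\rho_{t-}-\int_{[0,t]}\rho_{s-}\,\ud(1-\chi_s)=R_t-(1-\chi_t)\Delta\rho_t=:L_t,
\]
the middle identity following from the analogous formula $R_t=(1-\chi_t)\rho_t-\int_{[0,t]}\rho_{s-}\,\ud(1-\chi_s)$. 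Thus $L_t=R_t$ at every continuity point of $R$ (where $\Delta\rho_t=0$ or $\chi_t=1$), whereas $L_t=R_{t-}$ at jump times of $\rho$; in particular $L$ is not \cadlag and Proposition \ref{prop:r-convergence} cannot be applied to $R^n\to R$ directly.

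To circumvent this I would excise the atom at $T$: put $\bar R^n_t:=R^n_t$, $\bar R_t:=R_t$ for $t<T$ and $\bar R^n_T:=R^n_{T-}$, $\bar R_T:=R_{T-}$. These lie in $\tlmcalAcirc(\mcalG_t)$, carry no jump at $T$, and satisfy $\int_{[0,T]}X_t\,\ud\bar R^n_t=\int_{[0,T)}X_t\,\ud R^n_t$ (and likewise for $\bar R$). At interior continuity points of $\bar R$ we have $\bar R^n_t=R^n_t\to L_t=\bar R_t$ from the previous step. The decisive computation is convergence at $T$: writing $R^n_{T-}=R^n_T-(1-\chi_{T-})\Delta\rho^n_T$, computing $\lim_n R^n_T=R_T+\Delta\rho_T\,\Delta\chi_T$ by integration by parts and dominated convergence (using $\rho^n_T=1$), and invoking \eqref{eqn:lim-in-t-} via $\Delta\rho^n_T=1-\rho^n_{T-}\to 1-\rho_{T-}=\Delta\rho_T$, the boundary terms cancel:
\[
\lim_{n\to\infty}R^n_{T-}=R_T+\Delta\rho_T\,\Delta\chi_T-(1-\chi_{T-})\Delta\rho_T=R_{T-},
\]
since $R_T=R_{T-}+(1-\chi_T)\Delta\rho_T$ and $(1-\chi_T)+\Delta\chi_T-(1-\chi_{T-})=0$. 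Hence $\bar R^n\to\bar R$ at all continuity points of $\bar R$ and at $T$, so Proposition \ref{prop:r-convergence} applies to the regular process $X$ and yields $\ee\big[\int_{[0,T]}X_t\,\ud\bar R^n_t\big]\to\ee\big[\int_{[0,T]}X_t\,\ud\bar R_t\big]$, which is exactly \eqref{eq:lim-in-A}. The main obstacle is precisely this analysis at $T$: because the integrands carry $\chi_{t-}$ on the left but $\chi_t$ on the right, and because $\rho^n$ tends to the left-limit process $\rho_{-}$ rather than to $\rho$, the naive pointwise limit $L$ is not \cadlag and the closed-interval integrals fail to converge; only after removing the atom at $T$ do the two discrepancies cancel, and checking this cancellation is where \eqref{eqn:lim-in-t-} and the regularity of $X$ are used.
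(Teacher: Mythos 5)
Your proof is correct, and its skeleton coincides with the paper's: both arguments first establish \eqref{eqn:lim-in-t-} by a double-monotonicity interchange of limits, then introduce the integrators $R^n_t=\int_{[0,t]}(1-\chi_{s-})\,\ud\rho^n_s$ and $R_t=\int_{[0,t]}(1-\chi_s)\,\ud\rho_s$ made continuous at $T$ (the paper defines them on $[0,T)$ and extends by $R^n_T:=R^n_{T-}$, $R_T:=R_{T-}$, which is exactly your $\bar R^n,\bar R$), prove $R^n_t\to R_{t-}$ on $[0,T)$ by the same integration-by-parts plus dominated-convergence computation, and finally invoke Proposition \ref{prop:r-convergence}. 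The one genuine divergence is the convergence at the terminal time. The paper first shows that the sequence $(R^n)_{n\ge 1}$ is itself non-decreasing in $n$ (sketched via integration by parts and the fact that $\ud(1-\chi_t)$ is a negative measure, i.e.\ a stochastic-dominance argument), and then repeats the iterated-limit swap of \eqref{eq:lim-nt} to conclude $R^n_{T-}\to R_{T-}$. You instead do an explicit boundary computation: $R^n_{T-}=R^n_T-(1-\chi_{T-})\Delta\rho^n_T$, with $\lim_n R^n_T=R_T+\Delta\rho_T\Delta\chi_T$ obtained by parts and dominated convergence (using $\rho^n_T=\rho_T=1$) and $\Delta\rho^n_T\to\Delta\rho_T$ supplied by \eqref{eqn:lim-in-t-} at $t=T$, after which the jump terms cancel identically. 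Your route avoids having to prove monotonicity of $(R^n)_n$ in $n$ altogether (monotonicity of $(\rho^n)_n$ enters only through \eqref{eqn:lim-in-t-}), at the price of a slightly longer algebraic verification; it also makes transparent exactly where the mismatch between $\chi_{t-}$ on the left and $\chi_t$ on the right of \eqref{eq:lim-in-A} gets absorbed. The paper's route is shorter once the monotonicity of $(R^n)_n$ is granted, but that fact is only sketched there, so the two proofs are of comparable length when written in full; both are valid.
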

\begin{proof}
Denote by $\Omega_0$ the set on which the convergence \eqref{eq:conv-rho} holds. The first observation is that for all $\omega\in\Omega_0$ and $t \in (0, T]$
\begin{align}\label{eq:lim-nt}
\lim_{n\to\infty}\rho^n_{t-}(\omega)=\lim_{n\to\infty}\lim_{u\uparrow t}\rho^n_{u}(\omega)=\lim_{u\uparrow t}\lim_{n\to\infty}\rho^n_{u}(\omega)=\lim_{u\uparrow t}\rho_{u-}(\omega)=\rho_{t-}(\omega),
\end{align}
where the order of limits can be swapped by monotonicity of the process and of the sequence. The convergence at $t=0$ is obvious as $\rho^n_{0-} = \rho_{0-} = 0$. This proves \eqref{eqn:lim-in-t-}.

Define for $t \in [0, T)$,
\begin{equation}\label{eqn:def_Rn}
R^n_t=\int_{[0, t]} (1-\chi_{s-}) \ud\rho^n_s, \qquad R_t=\int_{[0, t]} (1-\chi_{s}) \ud\rho_s,
\end{equation}
and extend both processes to $t=T$ in a continuous way by taking $R^n_{T}:=R^n_{T-}$ and $R_{T}:=R_{T-}$. By construction we have $(R^n)_{n\ge 1}\subset\tilde{\mcalAcirc} (\mcalG_t)$ and $R\in \tilde{\mcalAcirc}(\mcalG_t)$ and the idea is to apply Proposition \ref{prop:r-convergence}. First we notice that for all $\omega\in\Omega$ and any $t\in[0,T)$ we have
\[
\Delta R_t(\omega)=(1-\chi_t(\omega))\Delta\rho_t(\omega),
\] 
so that we can write the set of points of continuity of $R$ as (recall Definition \ref{def:def_C})
\[
C_R(\omega)=C_{\rho}(\omega)\cup \{t\in[0,T]:\chi_t(\omega)=1\}.
\] 

For any $t\in[0, T)$ and all $\omega\in\Omega_0$, integrating $R^n_t(\omega)$ by parts (\citep[Prop. 4.5, Chapter 0]{revuzyor}) and then taking limits as $n\to\infty$ we get
\begin{align}\label{eq:Rcon}
\lim_{n\to\infty}R^n_t(\omega)=&\,\lim_{n\to\infty} \Big[(1-\chi_{t}(\omega))\rho^n_t(\omega) - \int_{[0, t]} \rho^n_s(\omega)\ud (1-\chi_{s}(\omega))\Big]\\
=&\, (1-\chi_{t}(\omega))\rho_{t-}(\omega) - \int_{[0, t]} \rho_{s-}(\omega)\ud (1-\chi_{s}(\omega)) \notag\\
=&\, R_t(\omega) - (1-\chi_t(\omega)) \Delta \rho_t(\omega)=R_{t-}(\omega),\notag 
\end{align}
where the second equality uses dominated convergence and \eqref{eq:conv-rho}, and the third equality is integration by parts. We can therefore conclude that 
\begin{align*}
\lim_{n\to\infty}R^n_t(\omega)=R_t(\omega),\quad\text{for all $t\in C_R(\omega) \cap[0,T)$ and all $\omega\in\Omega_0$}.
\end{align*}

It remains to show the convergence at $T$ which is in $C_R(\omega)$ by our construction of $R$. Since the function $t \mapsto \rho_t(\omega)$ is non-decreasing and the sequence $(\rho^n(\omega))_n$ is non-decreasing, the sequence $(R^n(\omega))_n$ is non-decreasing too (an easy proof of this fact involves integration by parts and observing that $t\mapsto \ud (1-\chi_t(\omega))$ defines a negative measure; notice also the link to the first-order stochastic dominance). As in \eqref{eq:lim-nt}, we show that $\lim_{n \to \infty} R^n_{T-}(\omega) = R_{T-} (\omega)$ for $\omega \in \Omega_0$. By construction of $R^n$ and $R$, this proves convergence of $R^n_T$ to $R_T$.

Then, the processes $R^n$ and $R$ fulfil all the assumptions of Proposition \ref{prop:r-convergence} whose application allows us to obtain \eqref{eq:lim-in-A}.
\end{proof}

From the convergence \eqref{eq:Rcon}, an identical argument as in \eqref{eq:lim-nt} proves convergence of left-limits of processes $(R^n)$ at any $t \in [0, T]$. The following corollary formalises this observation. It will be used in Section \ref{sec:relax}.
\begin{cor}\label{cor:lim_R_in-t-}
Consider the processes $(R^n)$ and $R$ defined in \eqref{eqn:def_Rn}. For $\prob$\ae $\omega \in \Omega$ we have
\begin{equation*}
\lim_{n\to\infty} R^n_{t-}(\omega)=R_{t-}(\omega)\quad \text{for all $t \in [0, T]$}.
\end{equation*}
\end{cor}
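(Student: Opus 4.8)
The plan is to read off this corollary directly from the computations already performed inside the proof of Proposition \ref{prop-specific-convergence-3}, reusing the double-limit manipulation of \eqref{eq:lim-nt}. Recall that the key output of that proof, recorded in \eqref{eq:Rcon}, is that for every $\omega$ in the full-measure set $\Omega_0$ on which \eqref{eq:conv-rho} holds we have $\lim_{n\to\infty} R^n_t(\omega) = R_{t-}(\omega)$ for all $t \in [0,T)$. The only additional ingredients I would need are the two monotonicity properties established there: each trajectory $t \mapsto R^n_t(\omega)$ is non-decreasing (since $R^n \in \tlmcalAcirc(\mcalG_t)$), and the sequence $(R^n(\omega))_n$ is itself non-decreasing. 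Both are already available from the proof of Proposition \ref{prop-specific-convergence-3}, so no new estimate is required.

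With these in hand the argument is the interchange of limits. For fixed $\omega \in \Omega_0$ and $t \in (0,T]$ I would write, exactly as in \eqref{eq:lim-nt},
\[
\lim_{n\to\infty} R^n_{t-}(\omega) = \lim_{n\to\infty}\lim_{u\uparrow t} R^n_u(\omega) = \lim_{u\uparrow t}\lim_{n\to\infty} R^n_u(\omega) = \lim_{u\uparrow t} R_{u-}(\omega) = R_{t-}(\omega).
\]
Here the first equality is the \cadlag property of $R^n$; the swap in the second equality is the exchange of two monotone limits; the third equality is \eqref{eq:Rcon}; and the final equality uses that $u \mapsto R_{u-}(\omega)$ increases to $R_{t-}(\omega)$ as $u \uparrow t$, again by monotonicity of $R(\omega)$. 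The endpoint $t=0$ is immediate since $R^n_{0-}(\omega)=R_{0-}(\omega)=0$. As $\prob(\Omega_0)=1$, this delivers the claimed convergence for $\prob$\ae $\omega \in \Omega$ at every $t \in [0,T]$.

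The one point requiring care — and the main (if mild) obstacle — is the justification of the limit interchange, which does not come from dominated convergence but purely from monotonicity in both indices. I would make explicit that $R^n_u(\omega)$ is non-decreasing in $u$ and non-decreasing in $n$, so that $\lim_n \lim_{u\uparrow t} R^n_u(\omega) = \sup_n \sup_{u<t} R^n_u(\omega) = \sup_{u<t}\sup_n R^n_u(\omega) = \lim_{u\uparrow t}\lim_n R^n_u(\omega)$; the non-decreasing monotonicity of $(R^n_{t-})_n$ needed to identify the limit with a supremum follows by passing the inequality $R^n_u \le R^{n+1}_u$ to the left-limit. Once this supremum-swap is spelled out, everything else is bookkeeping and the corollary follows.
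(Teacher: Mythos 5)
Your proof is correct and is essentially the paper's own argument: the paper's one-line proof of Corollary \ref{cor:lim_R_in-t-} invokes precisely the convergence \eqref{eq:Rcon} together with the monotone double-limit interchange of \eqref{eq:lim-nt}, which you have simply written out in detail. The explicit supremum-swap justification and the monotonicity of $(R^n_{t-}(\omega))_n$, obtained by passing $R^n_u(\omega)\le R^{n+1}_u(\omega)$ to the left limit, are exactly the ingredients the paper leaves implicit.
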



\subsection{Verification of the conditions of Sion's theorem}\label{sec:verif}

For the application of Sion's theorem, we will consider a weak topology on $\laac(\ef^1_t)$ and $\laa(\ef^2_t)$ inherited from the space $\es$. In our arguments, we will often use that for convex sets the weak and strong closedness are equivalent \cite[Theorem 3.7]{Brezis2010} (although weak and strong convergence are not equivalent, c.f. \cite[Corollary 3.8]{Brezis2010}).

\begin{Lemma} \label{lem-strat-set-compact}
For any filtration $(\mcalG_t) \subseteq (\ef_t)$ satisfying the usual conditions, the set $\laa(\mcalG_t)$ is weakly compact in $\es$.
\end{Lemma}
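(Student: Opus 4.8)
The plan is to exploit that $\es$ is a Hilbert space, hence reflexive, so that weak compactness of $\laa(\mcalG_t)$ follows once I establish that this set is \textbf{bounded}, \textbf{convex} and \textbf{strongly closed} in $\es$. Indeed, a convex strongly closed set is weakly closed by \cite[Theorem 3.7]{Brezis2010}, and a bounded weakly closed subset of a reflexive space is weakly compact (a bounded weakly closed set is a weakly closed subset of a weakly compact ball). Boundedness and convexity are immediate; the substance of the argument lies in strong closedness, whose difficulty is to produce an \emph{adapted} \cadlag non-decreasing representative of an $L^2$-limit.

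For boundedness, every $\rho\in\laa(\mcalG_t)$ admits a representative $\hat\rho\in\mcalAcirc(\mcalG_t)$ with $0\le\hat\rho_t(\omega)\le 1$ for all $(t,\omega)$, so $|\rho|\le 1$ holds $(\lambda\times\prob)$\ae and $\|\rho\|_\es$ is controlled by the total mass of $\lambda\times\prob$. For convexity, if $\rho^1,\rho^2\in\laa(\mcalG_t)$ have \cadlag representatives $\hat\rho^1,\hat\rho^2$ and $\alpha\in[0,1]$, then $\alpha\hat\rho^1+(1-\alpha)\hat\rho^2$ is again $(\mcalG_t)$-adapted, \cadlag, non-decreasing, null at $0-$ and equal to $1$ at $T$, hence a representative of $\alpha\rho^1+(1-\alpha)\rho^2$.

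For strong closedness, take $\rho^n\in\laa(\mcalG_t)$ with $\rho^n\to\rho$ in $\es$. Along a subsequence (not relabelled) the \cadlag representatives satisfy $\hat\rho^n\to\rho$ $(\lambda\times\prob)$\ae Set $g_t(\omega):=\limsup_n\hat\rho^n_t(\omega)$, which is $\mcalG_t$-measurable, valued in $[0,1]$, and agrees with $\rho$ $(\lambda\times\prob)$\ae I then regularise by defining
\[
\hat\rho_t(\omega):=\inf_{q\in\mathbb{Q},\,q>t} g_q(\omega),\quad t\in[0,T),\qquad \hat\rho_T(\omega):=1 .
\]
By construction $t\mapsto\hat\rho_t(\omega)$ is non-decreasing, right-continuous and $[0,1]$-valued, with a possible jump at $T$; moreover $\hat\rho_t$ is $\bigcap_{q>t}\mcalG_q=\mcalG_{t+}=\mcalG_t$-measurable, where the equality $\mcalG_{t+}=\mcalG_t$ is exactly the right-continuity of the filtration in \ref{ass:filtration}. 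Hence $\hat\rho\in\mcalAcirc(\mcalG_t)$. To identify $\hat\rho$ with $\rho$, fix $\omega$ in the full-measure set where $\hat\rho^n_t(\omega)\to\rho_t(\omega)$ for $t$ in a set $T_\omega$ of full Lebesgue measure. Monotonicity of each $\hat\rho^n_\cdot(\omega)$ gives, for $t\in T_\omega$ and rational $q>t$, that $\rho_t(\omega)\le g_q(\omega)$, and for $s\in T_\omega$ with $t<q<s$ that $g_q(\omega)\le\rho_s(\omega)$; taking the infimum over $q>t$ and then $s\downarrow t$ along $T_\omega$ yields $\rho_t(\omega)\le\hat\rho_t(\omega)\le\inf_{s\in T_\omega,\,s>t}\rho_s(\omega)$. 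Since $s\mapsto\rho_s(\omega)$ is non-decreasing on $T_\omega$, the two bounds coincide except at its countably many jump points, so $\hat\rho_t(\omega)=\rho_t(\omega)$ for $\lambda$\ae $t$; by Fubini $\hat\rho=\rho$ $(\lambda\times\prob)$\ae This shows $\rho\in\laa(\mcalG_t)$, with $\hat\rho$ its \cadlag representative (unique up to indistinguishability by Lemma \ref{lem:cadlag_indis}).

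I expect the main obstacle to be precisely this adaptedness step: strong $L^2$ convergence only gives pointwise convergence on an $\omega$-dependent null set of times, so one cannot reason at fixed deterministic times and must instead regularise through the $\omega$-wise monotone structure, invoking the right-continuity of $(\mcalG_t)$ from \ref{ass:filtration} to recover $\mcalG_t$-measurability of the limit. The a.e.\ identification $\hat\rho=\rho$ likewise requires the monotone sandwiching above rather than a naive passage to the limit. With these three properties in hand, the conclusion follows from reflexivity of $\es$ and \cite[Theorem 3.7]{Brezis2010}.
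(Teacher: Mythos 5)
Your proof is correct and follows essentially the same route as the paper: weak compactness is reduced, via reflexivity of $\es$, Kakutani's theorem and \cite[Theorem 3.7]{Brezis2010}, to boundedness, convexity and strong closedness, and strong closedness is proved by extracting a $(\lambda\times\prob)$\ae convergent subsequence of \cadlag representatives and manufacturing from it an adapted, \cadlag, non-decreasing representative of the limit by taking right limits, with right-continuity of $(\mcalG_t)$ delivering adaptedness. The one point where you genuinely deviate is the construction of that representative. The paper evaluates the limit $\rho$ on a countable dense set $D$ of times at which $\hat\rho^{n_k}_t\to\rho_t$ holds $\prob$\as, takes right limits along $D$ on the resulting full-measure event $\Omega_0$, and must patch the definition on the exceptional null set $\mathcal N$ (this is precisely where completeness of the filtration enters, to keep the patched process adapted). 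You instead take the pointwise $\limsup_n\hat\rho^n_t$, which is defined, monotone and $\mcalG_t$-measurable for \emph{every} $(t,\omega)$, and right-regularise it along the rationals, so no null-set surgery --- and in fact no completeness of $(\mcalG_t)$, only its right-continuity --- is needed. Your identification of $\hat\rho$ with $\rho$ through the monotone sandwich $\rho_t\le\hat\rho_t\le\inf_{s\in T_\omega,\,s>t}\rho_s$, with equality off a countable set of jump times, is also more direct than the paper's, which re-runs the argument of Lemma \ref{lem:cadlag_convergence} to get convergence of $\hat\rho^{n_k}$ to $\hat\rho$ at continuity points and then invokes dominated convergence to conclude $\hat\rho=\rho$ in $\es$; the paper's detour does, however, yield as a by-product the a.e.\ convergence $\hat\rho^{n_k}\to\hat\rho$, not merely equality of the limits.
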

\begin{proof}
We write $\laa$ for $\laa(\mcalG_t)$ and $\mcalAcirc$ for $\mcalAcirc(\mcalG_t)$. The set $\laa$ is a subset of a ball in $\es$. Since $\es$ is a reflexive Banach space, this ball is weakly compact (Kakutani's theorem, \cite[Theorem 3.17]{Brezis2010}). Therefore, we only need to show that $\laa$ is weakly closed. Since $\laa$ is convex, it is enough to show that $\laa$ is strongly closed \cite[Theorem 3.7]{Brezis2010}.

Take a sequence $(\rho^n)_{n\ge 1}\subset\laa$ that converges strongly in $\es$ to $\rho$. We will prove that $\rho\in\laa$ by constructing a \cadlag non-decreasing adapted process $(\hat \rho_t)$ such that $\hat\rho_{0-} = 0$, $\hat \rho_T = 1$, and $\hat \rho = \rho$ $(\lambda\times\prob)$\ae With no loss of generality we can pass to the \cadlag representatives $(\hat \rho^n)_{n\ge 1}\subset\mcalAcirc$ which also converge to $\rho$ in $\mathcal S$. Then, there is a subsequence $(n_k)_{k\ge 1}$ such that $\hat\rho^{n_k}\to \rho$ $(\lambda \times \prob)$\ae \cite[Theorem 4.9]{Brezis2010}.

Since 
\[
\int_0^t \prob\big(\lim_{k\to\infty}\hat\rho^{n_k}_s=\rho_s\big) \ud s=t,\quad\text{for all $t\in[0,T]$,}
\]
we can find $\hat D\subset [0,T]$ with $\lambda([0,T]\setminus\hat D)=0$ such that $\mathbb{P}(\Omega_t)=1$ for all $t\in \hat D$, where 
\[
\Omega_t:=\{\omega\in\Omega: \lim_{k\to\infty}\hat \rho^{n_k}_t(\omega)=\rho_t(\omega)\}.
\]
Then we can take a dense countable subset $D\subset \hat D$ and define $\Omega_0:=\cap_{t\in D}\Omega_t$ so that $\mathbb{P}(\Omega_0)=1$ and \[
\lim_{k\to\infty}\hat \rho^{n_k}_t(\omega)=\rho_t(\omega),\qquad\text{for all $(t,\omega)\in D\times\Omega_0$.}
\]
Since $\hat \rho^{n_k}$ are non-decreasing, so is the mapping $D\ni t\mapsto \rho_t(\omega)$ for all $\omega\in\Omega_0$. Let us extend this mapping to $[0,T]$ by defining $\hat \rho_t(\omega):=\rho_t(\omega)$ for $t\in D$ and
\[
\hat{\rho}_t(\omega):=\lim_{s\in D:s\downarrow t} \rho_s(\omega),\quad\hat{\rho}_{0-}(\omega):=0,\quad \hat{\rho}_{T}(\omega):=1,\quad\text{for all $\omega\in \Omega_0$,}
\]
where the limit exists due to monotonicity. For $\omega\in \mathcal{N}:=\Omega\setminus\Omega_0$, we set $\hat \rho_t(\omega) = 0$ for $t < T$ and $\hat \rho_T(\omega)=1$.  Notice that $\mathcal{N}\in\mcalG_0$ since $\prob(\mathcal{N})=0$ so that $\hat{\rho}_t$ is $\mcalG_t$-measurable for $t\in D$. Moreover, $\hat{\rho}$ is \cadlag by construction and $\hat{\rho}_t$ is measurable with respect to $\cap_{s\in D, s > t\,}\mcalG_s=\mcalG_{t+}=\mcalG_t$ for each $t\in[0,T]$ by the right-continuity of the filtration.  Hence, $\hat \rho$ is $(\mcalG_t)$-adapted and $\hat \rho \in \mcalAcirc$.

It remains to show that $\hat \rho^{n_k}\to \hat{\rho}$ in $\es$ so that $\hat \rho=\rho$ $(\lambda\times\prob)$\ae and therefore $\rho\in\mathcal{A}$. It suffices to show that $\hat \rho^{n_k}\to \hat{\rho}$ $(\lambda\times\prob)$\ae and then conclude by dominated convergence that $\hat \rho^{n_k}\to \hat{\rho}$ in $\es$. For each $\omega\in\Omega_0$ the process $t\mapsto\hat \rho(\omega)$ has at most countably many jumps (on any bounded interval) by monotonicity, i.e., $\lambda([0,T]\setminus C_{\hat \rho}(\omega))=0$ (recall Definition \ref{def:def_C}). Moreover, arguing as in the proof of Lemma \ref{lem:cadlag_convergence}, we conclude
\[
\lim_{k\to\infty}\hat \rho^{n_k}_t(\omega)=\hat \rho_t(\omega),\quad\text{for all $t\in C_{\hat \rho}(\omega)$ and all $\omega\in\Omega_0$}.
\]
Since $(\lambda\!\times\!\mathbb{P})(\{(t,\omega)\!:\!t\in C_{\hat\rho}(\omega)\cap B,\omega\in\Omega_0\})\!=\!\lambda(B)$ for any bounded interval $B\subseteq[0,T]$ then $\hat \rho^{n_k}\!\to\! \hat{\rho}$ in $\es$ and $\laa$ is strongly closed in $\es$.
\end{proof}
 
\begin{remark}\label{rem-TV-norm-doesnt-work}
Our space $\laa(\mcalG_t)$ is the space of processes that generate randomised stopping times and for any $\rho\in\laa(\mcalG_t)$ we require that $\rho_T(\omega)=1$, for all $\omega\in\Omega$. In the finite horizon problem, i.e., $T<\infty$, such specification imposes a constraint that prevents a  direct use of the topology induced by the norm considered in \cite{TouziVieille2002}. Indeed, in \cite{TouziVieille2002} the space $\mathcal S$ is that of $(\mcalG_t)$-adapted processes $\rho$ with
\begin{equation*}
\|\rho\|^2:=\ee\bigg[\int_{0}^T (\rho_t)^2 \ud t + (\Delta \rho_T)^2\bigg] < \infty,\quad \Delta\rho_T:=\rho_T-\liminf_{t\uparrow T}\rho_t.
\end{equation*}
The space of generating processes $\laa(\mcalG_t)$ is not closed in the topology induced by $\|\cdot\|$ above: define a sequence $(\rho^n)_{n\ge 1}\subset\laa(\mcalG_t)$ by
\begin{equation*}
\rho^n_t = n \bigg(t - T + \frac{1}{n}\bigg)^+, \qquad t \in [0, T].
\end{equation*}
Then $\|\rho^n\|\to 0$ as $n\to\infty$ but $\rho\equiv 0\notin\laa(\mcalG_t)$ since it fails to be equal to one at $T$ (and it is not possible to select a representative from $\laa(\mcalG_t)$ with the equivalence relation induced by $\|\,\cdot\,\|$). 
\end{remark}

It is of interest to explore the relationship between the topology on $\laa(\mcalG_t)$ implied by the weak topology on $\es$ (denote it by $\mcalO_2$) and the topology introduced in \cite{BaxterChacon, Meyer} (denote it by $\mcalO_1$). The topology $\mcalO_1$ is the coarsest topology in which all functionals of the form
\begin{equation}\label{eqn:top_O2}
\laa(\mcalG_t) \ni \rho \mapsto \ee \Big[\int_{[0, T]} X_t\, \ud \rho_t\Big]
\end{equation}
are continuous for any $X \in \mcalL$ with continuous trajectories. Our topology $\mcalO_2$, instead, is the restriction to $\laa(\mcalG_t)$ of the weak topology on $\es$. That is, $\mcalO_2$ is the coarsest topology for which all functionals of the form
\begin{equation*}
\laa(\mcalG_t) \ni \rho \mapsto \ee \Big[\int_{[0, T]} \rho_t\, Y_t\, \ud t\Big]
\end{equation*}
are continuous for all $Y \in \es$. 
\begin{Lemma}\label{lem:top}
Topologies $\mcalO_1$ and $\mcalO_2$ are identical.
\end{Lemma}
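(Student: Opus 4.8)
The plan is to prove the two inclusions $\mcalO_1\subseteq\mcalO_2$ and $\mcalO_2\subseteq\mcalO_1$ separately. Since both are \emph{initial} (weak) topologies, defined as the coarsest topology making a prescribed family of real functionals continuous, the inclusion $\mcalO_i\subseteq\mcalO_j$ holds as soon as every functional generating $\mcalO_i$ is continuous for $\mcalO_j$. Recall that $\mcalO_2$ is exactly the weak topology of $\es$ restricted to $\laa(\mcalG_t)$, so the $\mcalO_2$-continuous functionals include all pairings $\rho\mapsto\ee[\int_0^T\rho_t Y_t\,\ud t]$ with $Y\in\es$, as well as all their uniform limits on $\laa(\mcalG_t)$. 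The device linking the two families is the integration-by-parts identity: for every $X\in\mcalL$ with absolutely continuous trajectories such that $\dot X\in\es$, and every $\rho\in\laa(\mcalG_t)$ with \cadlag representative $\hat\rho$,
\[
\ee\Big[\int_{[0,T]}X_t\,\ud\rho_t\Big]=\ee[X_T]-\ee\Big[\int_0^T\rho_t\,\dot X_t\,\ud t\Big].
\]
I would first establish this pathwise: the covariation term vanishes because $X$ is continuous and $\hat\rho$ is of finite variation, and the boundary term collapses to $\ee[X_T]$ since $\hat\rho_{0-}=0$ and $\hat\rho_T=1$; taking expectations is then justified by Fubini, as $\rho_t\in[0,1]$ and $\dot X\in\es\subseteq L^1(\lambda\times\prob)$.

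For the inclusion $\mcalO_2\subseteq\mcalO_1$, fix $Y\in\es$ and set $X_t:=\int_t^T Y_s\,\ud s$. By Fubini $Y_{\cdot}(\omega)\in L^1([0,T])$ for $\prob$-a.e.\ $\omega$, so $X$ has absolutely continuous (hence continuous) trajectories with $\dot X=-Y$ and $X_T=0$; moreover $\sup_t|X_t(\omega)|\le\int_0^T|Y_s(\omega)|\,\ud s$ together with Cauchy--Schwarz gives $\|X\|_{\mcalL}\le\sqrt{T}\,\|Y\|_{\es}<\infty$, so $X\in\mcalL$ with continuous trajectories. Substituting into the identity yields $\ee[\int_{[0,T]}X_t\,\ud\rho_t]=\ee[\int_0^T\rho_t Y_t\,\ud t]$, i.e.\ the $\mcalO_2$-generating functional coincides \emph{exactly} with an $\mcalO_1$-generating functional and is therefore $\mcalO_1$-continuous. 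This direction thus needs no approximation.

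For the reverse inclusion $\mcalO_1\subseteq\mcalO_2$, note that when $X$ is itself absolutely continuous with $\dot X\in\es$ the identity exhibits the $\mcalO_1$-generator as the constant $\ee[X_T]$ minus the pairing $\ee[\int_0^T\rho_t\dot X_t\,\ud t]$, which is $\mcalO_2$-continuous. The remaining, and in my view \emph{main}, difficulty is a general continuous $X\in\mcalL$, which need not be absolutely continuous. Here I would approximate in $\|\cdot\|_{\mcalL}$ by first truncating, $X^{(N)}:=(-N)\vee(X\wedge N)$, so that $\|X-X^{(N)}\|_{\mcalL}\to0$ by dominated convergence, and then mollifying $X^{(N)}$ in time (with suitable treatment of the endpoint $T$) to obtain $\tilde X$ with absolutely continuous trajectories and bounded, hence $\es$, derivative, satisfying $\|X^{(N)}-\tilde X\|_{\mcalL}\to0$ by pathwise uniform continuity. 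The crucial point is the uniform estimate $|\ee[\int_{[0,T]}(X-\tilde X)\,\ud\rho_t]|\le\|X-\tilde X\|_{\mcalL}$, valid for \emph{all} $\rho\in\laa(\mcalG_t)$ because $\hat\rho$ has total variation equal to $1$; it shows the $\mcalO_1$-generator for $X$ is a uniform-on-$\laa(\mcalG_t)$ limit of $\mcalO_2$-continuous functionals, hence itself $\mcalO_2$-continuous. Combining the two inclusions gives $\mcalO_1=\mcalO_2$. I expect the bookkeeping that secures $\dot{\tilde X}\in\es$ (which is precisely what forces the preliminary truncation) together with this uniform control to be the only genuinely delicate step, everything else reducing to the integration-by-parts computation.
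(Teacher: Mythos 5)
Your proof is correct, but it takes a genuinely different route from the paper's on the harder inclusion. The easy direction $\mcalO_2\subseteq\mcalO_1$ is handled identically in both (integration by parts; your choice $X_t=\int_t^T Y_s\,\ud s$ makes the identification exact, while the paper's $X_t=\int_0^t Y_s\,\ud s$ leaves a harmless additive constant $\ee[X_T]$ --- purely cosmetic). The difference is in the reverse inclusion: the paper never verifies directly that the $\mcalO_1$-generating functionals are $\mcalO_2$-continuous. Instead it invokes a soft topological argument: $\laa(\mcalG_t)$ is compact in $\mcalO_1$ (Meyer's compactness theorem, cited) and in $\mcalO_2$ (its Lemma \ref{lem-strat-set-compact}), $\mcalO_2$ is Hausdorff, and a compact Hausdorff topology is minimal among Hausdorff topologies --- equivalently, the identity from $(\laa,\mcalO_1)$ to $(\laa,\mcalO_2)$ is a continuous bijection from a compact space to a Hausdorff space, hence a homeomorphism. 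You instead prove the inclusion by hand: reduce to absolutely continuous $X$ with $\dot X\in\es$ via truncation plus mollification, and pass to the limit using the bound $|\ee[\int_{[0,T]}(X_t-\tilde X_t)\,\ud\rho_t]|\le\|X-\tilde X\|_{\mcalL}$, which holds uniformly over $\rho\in\laa(\mcalG_t)$ because $\ud\hat\rho$ has total mass one. Each approach buys something: yours is self-contained (no Meyer compactness, no Kakutani/weak-compactness lemma, no minimality-of-compact-Hausdorff fact) and actually shows more --- the two families of functionals generate the same topology on \emph{any} set of generating processes with uniformly bounded total mass, compactness playing no role; the paper's is far shorter given the infrastructure already established and avoids all approximation bookkeeping (endpoint treatment at $T$, measurability of the mollification, the truncation needed to keep $\dot{\tilde X}$ in $\es$). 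One shared caveat worth noting: both arguments implicitly use $T<\infty$ --- you through the $\sqrt{T}$ Cauchy--Schwarz bound and the step ``bounded derivative $\Rightarrow\dot{\tilde X}\in\es$'', the paper through the requirement $\int_0^\cdot Y_s\,\ud s\in\mcalL$, which can fail for $Y\in\es$ when $T=\infty$ --- so this is not a gap of yours relative to the paper, but neither proof covers the infinite-horizon case without extra care.
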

\begin{proof}
Denoting 
\begin{align}\label{eq:Xint}
X_t = \int_{[0, t]} Y_t\, \ud t
\end{align} 
and integrating by parts, we obtain for $\rho \in \laa (\mcalG_t)$
\[
\ee \Big[\int_{[0, T]} \rho_t\, Y_t\, \ud t\Big]
=
\ee \Big[X_T \rho_T - X_0 \rho_{0-} - \int_{[0, T]} X_t\, \ud \rho_t \Big]
=
\ee \Big[X_T - \int_{[0, T]} X_t\, \ud \rho_t \Big],
\]
where we used that $\rho_T = 1$ and $\rho_{0-} = 0$, $\prob$\as Hence, $\mcalO_2$ is the coarsest topology on $\laa (\mcalG_t)$ for which functionals \eqref{eqn:top_O2} are continuous for all processes $X$ defined as in \eqref{eq:Xint}. Since these processes $X$ are continuous, we conclude that $\mcalO_2 \subset \mcalO_1$.

The set $\laa (\mcalG_t)$ is compact in the topologies $\mcalO_1$ \citep[Theorem 3]{Meyer} and $\mcalO_2$ (see Lemma \ref{lem-strat-set-compact} above). The compact Hausdorff topology is the coarsest among Hausdorff topologies \cite[Cor.~3.1.14, p. 126]{Engelking}. Since $\mcalO_2$ is Hausdorff by \cite[Prop~3.3]{Brezis2010}, so is $\mcalO_1$ and we have $\mcalO_1 = \mcalO_2$.

\end{proof}

\begin{remark}
Meyer \cite[Thm.~4]{Meyer} shows that if $\ef$ is separable (i.e., countably generated) then the topology $\mcalO_1$ (hence $\mcalO_2$) is metrizable. This could also be seen directly for the topology $\mcalO_2$ by \cite[Thm.~3.29]{Brezis2010}, because $\laa(\mcalG_t)$ is bounded in $\es$ and $\mcalO_2$ is the restriction to $\laa(\mcalG_t)$ of the weak topology on $\es$. Indeed, it emerges from this argument for the metrizability of $\mcalO_2$ that it is sufficient to require that only $\mcalG_T$ be separable.
\end{remark}

\begin{Lemma}\label{lem:semi-cont}
Given any $(\xi,\zeta)\in\laac(\ef^1_t) \times\laa (\ef^2_t)$, the functionals $N(\xi,\cdot):\laa(\ef^2_t)\to\er$ and $N(\cdot,\zeta):\laac(\ef^1_t)\to\er$ are, respectively, upper semicontinuous and lower semicontinuous in the strong topology of $\es$.
\end{Lemma}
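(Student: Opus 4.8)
The plan is to work from the decomposition of the payoff in Proposition \ref{prop-functionals-equal},
\[
N(\xi,\zeta)=\ee\Big[\int_{[0,T)} f_t(1-\zeta_t)\,d\xi_t\Big]+\ee\Big[\int_{[0,T)} g_t(1-\xi_t)\,d\zeta_t\Big]+\ee\big[h_T\,\Delta\xi_T\,\Delta\zeta_T\big]=:(\mathrm I)+(\mathrm{II})+(\mathrm{III}),
\]
where the jump term collapses to a single terminal contribution because $\xi\in\laac(\ef^1_t)$ forces $\Delta\xi_t=0$ for $t<T$ and $\Delta\xi_T=1-\xi_{T-}$. Since the strong topology of $\es$ is metrizable, I would argue sequentially, and since strong $\es$-convergence yields $(\lambda\times\prob)$-a.e.\ convergence along subsequences, by the usual subsequence principle it suffices to prove each semicontinuity inequality along an arbitrary a.e.-convergent subsequence (which I keep denoting by the same index).

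For lower semicontinuity of $\xi\mapsto N(\cdot,\zeta)$ (with $\zeta$ fixed): because $\xi$ has a continuous \cadlag representative on $[0,T)$, Lemma \ref{lem:cadlag_convergence} upgrades a.e.\ convergence of $\xi^n$ to pointwise convergence $\hat\xi^n_t\to\hat\xi_t$ at \emph{every} $t\in[0,T)$, $\prob$-a.s.; dominated convergence then gives $(\mathrm{II})^n\to(\mathrm{II})$. For the remaining terms I would apply Corollary \ref{cor-specific-convergence-2} with $X=f$, $\chi=\zeta$ and integrators $\xi^n\to\xi$ to get $(\mathrm I)^n+\ee[f_T\Delta\zeta_T\Delta\xi^n_T]\to(\mathrm I)+\ee[f_T\Delta\zeta_T\Delta\xi_T]$, and then write $(\mathrm{III})^n$ as this convergent quantity minus $\ee[(f_T-h_T)\Delta\zeta_T\Delta\xi^n_T]$. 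As $(f_T-h_T)\Delta\zeta_T\ge0$ by Assumption \ref{eq-order-cond}, Lemma \ref{prop-terminal-time-jump-limit} bounds the $\limsup$ of the latter by its value at $\xi$, yielding $\liminf_n[(\mathrm I)^n+(\mathrm{III})^n]\ge(\mathrm I)+(\mathrm{III})$ and hence the claim.

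For upper semicontinuity of $\zeta\mapsto N(\xi,\cdot)$ (with $\xi$ fixed): term $(\mathrm I)^n$ is continuous, since writing $d\xi_t=\dot\xi_t\,dt$ on $[0,T)$ it is the pairing of the uniformly bounded $\zeta^n\to\zeta$ (convergent in measure) against the finite signed measure $f_t\dot\xi_t\,dt\,d\prob$, whose total variation is controlled by $\|f\|_{\mcalL}$, so $(\mathrm I)^n\to(\mathrm I)$ by bounded convergence. The delicate term is $(\mathrm{II})^n$, whose integrator $\zeta^n$ is not absolutely continuous and may concentrate mass near $T$. Here I would replace the integrand $g(1-\xi)$ --- which satisfies \eqref{eq:cond-reg} at predictable times in $[0,T)$ because $\xi$ is continuous and adapted, but jumps at $T$ --- by its left-continuous-at-$T$ modification $\bar Y$ with $\bar Y_T:=g_{T-}(1-\xi_{T-})$, which is genuinely regular and adapted on $[0,T]$. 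Proposition \ref{prop:r-convergence} applied to $\bar Y$ (its hypothesis supplied by Lemma \ref{lem:cadlag_convergence}) gives $\ee[\int_{[0,T]}\bar Y\,d\zeta^n]\to\ee[\int_{[0,T]}\bar Y\,d\zeta]$, so that $(\mathrm{II})^n=\ee[\int_{[0,T]}\bar Y\,d\zeta^n]-\ee[g_{T-}(1-\xi_{T-})\Delta\zeta^n_T]$.

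The main obstacle is the recombination of this terminal correction with $(\mathrm{III})^n$, which produces the weight $(h_T-g_{T-})(1-\xi_{T-})$ that is \emph{not} pointwise nonnegative. The crux is to condition on $\ef_{T-}$: since $\Delta\zeta^n_T=1-\zeta^n_{T-}$ and $\xi_{T-},g_{T-},\zeta^n_{T-}$ are all $\ef_{T-}$-measurable, the expectation equals $\ee[\tilde C\,\Delta\zeta^n_T]$ with $\tilde C:=(1-\xi_{T-})(\ee[h_T|\ef_{T-}]-g_{T-})$. Regularity of $g$ at the predictable time $T$ gives $g_{T-}=\ee[g_T|\ef_{T-}]$, whence $\tilde C=(1-\xi_{T-})\,\ee[h_T-g_T|\ef_{T-}]\ge0$ by Assumption \ref{eq-order-cond}; Lemma \ref{prop-terminal-time-jump-limit} then yields $\limsup_n\ee[\tilde C\Delta\zeta^n_T]\le\ee[\tilde C\Delta\zeta_T]$, and reversing the conditioning gives $\limsup_n[(\mathrm{II})^n+(\mathrm{III})^n]\le(\mathrm{II})+(\mathrm{III})$. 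Adding the continuous term $(\mathrm I)^n$ completes the upper semicontinuity. Note the asymmetry: the minimiser's direction uses $f\ge h$ directly, whereas the maximiser's direction needs the ordering $h\ge g$ together with the regularity of $g$ at $T$ to correct for the left limit $g_{T-}$.
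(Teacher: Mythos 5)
Your proof is correct, and its skeleton coincides with the paper's: the same reduction to a.e.-convergent subsequences, the same collapse of the jump term to $h_T\Delta\xi_T\Delta\zeta_T$, dominated convergence for the term where the fixed player's control is the integrator, Corollary \ref{cor-specific-convergence-2} for the minimiser's integrators $\xi^n$, Proposition \ref{prop:r-convergence} for the maximiser's integrators $\zeta^n$, and Lemma \ref{prop-terminal-time-jump-limit} combined with \ref{eq-order-cond} for the terminal corrections; in particular your lower-semicontinuity half is essentially the paper's argument verbatim. The genuine divergence is in the upper-semicontinuity half, namely in how the integrand against $d\zeta^n$ is made regular at $T$. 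The paper exploits the identity $1-\xi_{T-}=\Delta\xi_T$ and works with $X_t=g_t(1-\xi_{t-})$ on the closed interval $[0,T]$: this process is regular including at $T$ (regularity of $g$ times the continuous factor $1-\xi_{t-}$), its value at $T$ is $g_T\Delta\xi_T$, so the leftover terminal weight is $(h_T-g_T)\Delta\xi_T\ge 0$ pointwise and Lemma \ref{prop-terminal-time-jump-limit} applies at once. You instead set $\bar Y_T:=g_{T-}(1-\xi_{T-})$, making $\bar Y$ continuous at $T$, which leaves the weight $(h_T-g_{T-})\Delta\xi_T$ that is not pointwise nonnegative; your repair — conditioning on $\ef_{T-}$, using that $\Delta\zeta^n_T=1-\zeta^n_{T-}$, $\xi_{T-}$ and $g_{T-}$ are all $\ef_{T-}$-measurable, and invoking regularity of $g$ at the predictable time $T$ to write $g_{T-}=\ee[g_T|\ef_{T-}]$, hence $\tilde C=(1-\xi_{T-})\,\ee[h_T-g_T|\ef_{T-}]\ge 0$ — is valid, also for $T=\infty$ under the paper's convention on regularity at infinity. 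So both routes rest on the same two ingredients (ordering \ref{eq-order-cond} at $T$ and regularity of $g$ at $T$): the paper uses the regularity to make its integrand $g_t(1-\xi_{t-})$ satisfy the hypotheses of Proposition \ref{prop:r-convergence} up to and including $T$, which pre-packages the cancellation and avoids any conditioning, while your version is longer but makes explicit that the ordering $h_T\ge g_T$ enters the upper-semicontinuity direction only through its $\ef_{T-}$-conditional expectation.
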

\begin{proof}
Since $\xi \in \laac (\ef^1_t)$, we have from \eqref{eq-functional-in-terms-of-controls} that the contribution of simultaneous jumps reduces to a single term:
\begin{equation}\label{eqn:N_cont}
N(\xi,\zeta)= \ee\bigg[\int_{[0, T)} f_t(1-\zeta_{t})d\xi_t + \int_{[0, T)} g_t(1-\xi_t)d\zeta_t + h_T\Delta\xi_T\Delta\zeta_T\bigg].
\end{equation}

\emph{Upper semicontinuity of $N(\xi,\cdot)$}. Fix $\xi\in\laac(\ef^1_t)$ and consider a sequence $(\zeta^{n})_{n \ge 1}\subset\laa(\ef^2_t)$ converging to $\zeta \in \laa(\ef^2_t)$ strongly in $\es$. We have to show that
\begin{equation*}
\limsup_{n\to\infty} N(\xi,\zeta^{n})\les N(\xi,\zeta).
\end{equation*}
Assume, by contradiction, that $\limsup_{n\to\infty} N(\xi,\zeta^{n}) > N(\xi,\zeta)$. There is a subsequence $(n_k)$ over which the limit on the left-hand side is attained. Along a further subsequence we have $(\prob\times\lambda)\ae$ convergence of $\zeta^n$ to $\zeta$ \citep[Theorem 4.9]{Brezis2010}. With an abuse of notation we will assume that the original sequence possesses those two properties, i.e., the limit $\lim_{n\to\infty} N(\xi,\zeta^{n})$ exists, it strictly dominates $N(\xi,\zeta)$, and there is $(\prob \times \lambda)\ae$ convergence of $\zeta^{n}$ to $\zeta$.

Since $\xi$ is absolutely continuous on $[0, T)$, 
\begin{equation*}
\lim_{n\to\infty}\ee\bigg[\int_{[0, T)} f_t(1-\zeta^{n}_{t})\ud\xi_t\bigg]=\ee\bigg[\int_{[0, T)} f_t(1-\zeta_{t})\ud\xi_t \bigg]
\end{equation*}
by the dominated convergence theorem. For the last two terms of $N(\xi, \zeta^n)$ in \eqref{eqn:N_cont} we have
\begin{align*}
\ee\bigg[\int_{[0, T)} g_t(1-\xi_t)\ud\zeta^n_t + h_T\Delta\xi_T\Delta\zeta^n_T\bigg]
&=
\ee\bigg[\int_{[0, T)} g_t(1-\xi_{t-})\ud\zeta^n_t + h_T\Delta\xi_T\Delta\zeta^n_T\bigg]\\
&=
\ee\bigg[\int_{[0, T]} g_t(1-\xi_{t-})\ud\zeta^n_t + (h_T - g_T) \Delta\xi_T\Delta\zeta^n_T\bigg],
\end{align*}
where the first equality is by the continuity of $\xi$ and for the second one we use that $1-\xi_{T-} = \Delta \xi_T$. From Lemma \ref{lem:cadlag_convergence} and the boundedness and continuity of $(\xi_t)$ we verify the assumptions of Proposition \ref{prop:r-convergence} (with $X_t=g_t(1-\xi_{t-})$ therein since $\xi_{t-}$ is continuous on $[0,T]$), so
\begin{equation*}
\lim_{n \to \infty} \ee\bigg[\int_{[0, T]} g_t(1-\xi_{t-})\ud\zeta^n_t \bigg] = \ee\bigg[\int_{[0, T]} g_t(1-\xi_{t-})\ud\zeta_t \bigg].
\end{equation*}
Recalling that $g_T \le h_T$, we obtain from Lemma \ref{prop-terminal-time-jump-limit}
\begin{equation*}
\limsup_{n\to\infty} \ee\big[(h_T-g_T)\Delta\xi_T\Delta\zeta^n_T\big] \les \ee\big[(h_T-g_T)\Delta\xi_T\Delta\zeta_T\big].
\end{equation*}
Combining above convergence results contradicts $\lim_{n\to\infty} N(\xi,\zeta^n) >N(\xi,\zeta)$, hence, proves the upper semicontinuity.

\emph{Lower semicontinuity of $N(\cdot,\zeta)$}. Fix $\zeta\in\laa(\ef^2_t)$ and consider a sequence $(\xi^{n})_{n\ge 1}\subset\laac(\ef^1_t)$ converging to $\xi \in \laac (\ef^1_t)$ strongly in $\es$. Arguing by contradiction as above, we assume that there is a subsequence of $\xi^{n}$, which we denote the same, such that $\xi^{n} \to \xi$ $(\prob\times\lambda)$\ae and 
\begin{equation}\label{eqn:two_terms2}
\lim_{n\to\infty} N(\xi^{n},\zeta) < N(\xi,\zeta). 
\end{equation}
By Lemma \ref{lem:cadlag_convergence} and the continuity of $(\xi_t)$ we have for $\prob$\ae $\omega\in\Omega$
\[
\lim_{n\to\infty} \xi^{n}_t(\omega) = \xi_t(\omega)\quad\text{for all $t \in [0,T)$}.
\] 
Then by dominated convergence for the second term of $N(\xi^n,\zeta)$ in \eqref{eqn:N_cont} we get
\[
\lim_{n\to\infty}\ee\bigg[\int_{[0, T)} g_t(1-\xi^{n}_t)\ud\zeta_t\bigg]= \ee\bigg[\int_{[0, T)} g_t(1-\xi_t)\ud\zeta_t\bigg].
\]
For the remaining terms of $N(\xi^{n}, \zeta)$, we have 
\[
\ee\bigg[\int_{[0, T)} f_t(1-\zeta_{t})\ud\xi^n_t + h_T\Delta\xi^n_T\Delta\zeta_T\bigg]=\ee\bigg[\int_{[0, T)} f_t(1-\zeta_{t})\ud\xi^n_t + f_T \Delta\xi^n_T\Delta\zeta_T + (h_T-f_T)\Delta\xi^n_T\Delta\zeta_T\bigg]. 
\]
Observe that, by Lemma \ref{prop-terminal-time-jump-limit},
\begin{equation*}
\liminf_{n\to\infty}\ee\big[(h_T-f_T)\Delta\xi^n_T\Delta\zeta_T\big]\ges \ee\big[(h_T-f_T)\Delta\xi_T\Delta\zeta_T\big],
\end{equation*}
because $h_T-f_T\les 0$. Further,
\begin{equation*}
\lim_{n\to\infty}\ee\bigg[\int_{[0, T)} f_t(1-\zeta_{t})\ud\xi^n_t + f_T \Delta\xi^n_T\Delta\zeta_T \bigg]=\ee\bigg[\int_{[0, T)} f_t(1-\zeta_{t})\ud\xi_t + f_T \Delta\xi_T\Delta\zeta_T\bigg]
\end{equation*}
by Corollary \ref{cor-specific-convergence-2}. The above results contradict \eqref{eqn:two_terms2}, therefore, proving the lower semicontinuity.
\end{proof}

We are now ready to prove that the game with continuous randomisation for the first player ($\tau$-player) has a value.
\begin{proof}[{\bf Proof of Theorem \ref{th-value-cont-strat}}]
We will show that the conditions of Sion's theorem hold (recall the notation in Theorem \ref{th-the-Sion}) with $(A,B)=(\laa(\ef^2_t),\laac(\ef^1_t))$ on the space $\es \times \es$ equipped with its weak topology. For the sake of compactness of notation, we will write $\laa$ for $\laa(\ef^2_t)$ and $\laac$ for $\laac(\ef^1_t)$. It is straightforward to verify that the sets $\laa$ and $\laac$ are convex. Compactness of $\laa$ in the weak topology of $\es$ follows from Lemma \ref{lem-strat-set-compact}. It remains to prove the convexity and semi-continuity properties of $N$ with respect to the weak topology of $\es$. This is equivalent to showing that for any $a\in\er$, $\hat{\xi}\in\laac$ and $\hat{\zeta}\in\laa$ the level sets 
\[
\mcal{K}(\hat{\zeta},a)=\{\xi\in\laac:N(\xi,\hat{\zeta})\les a\} \qquad \text{and}\qquad  \mcal{Z}(\hat{\xi},a)=\{\zeta\in\laa:N(\hat{\xi},\zeta)\ges a\}
\]
are convex and closed in $\laac$ and $\laa$, respectively, with respect to the weak topology of $\es$. For any $\lambda \in [0,1]$ and $\xi^{1}, \xi^{2} \in \laac$, $\zeta^{1}, \zeta^{2} \in \laa$, using the expression in \eqref{eq-functional-in-terms-of-controls} it is immediate (by linearity) that 
\begin{align*}
N(\lambda \xi^{1} + (1-\lambda) \xi^{2}, \hat \zeta) &= \lambda N(\xi^{1}, \hat \zeta) + (1-\lambda) N(\xi^{2}, \hat\zeta),\\
N(\hat\xi, \lambda \zeta^{1} + (1-\lambda)\zeta^{2}) &= \lambda N(\hat\xi, \zeta^{1}) + (1-\lambda) N(\hat\xi, \zeta^{2}).
\end{align*}
This proves the convexity of the level sets. Their closedness in the strong topology of $\es$ is established in Lemma \ref{lem:semi-cont}. The latter two properties imply, by \cite[Theorem 3.7]{Brezis2010}, that the level sets are closed in the weak topology of $\es$. Sion's theorem (Theorem \ref{th-the-Sion}) yields the existence of the value of the game: $W_* = W^*$.

The second part of the statement results from using a version of Sion's theorem proved in \cite{Komiya1988} which allows to write $\max$ instead of $\sup$ in \eqref{eq-value-cont-restriction}, i.e.,
\begin{equation*}
\sup_{\zeta\in\laa}\inf_{\xi\in\laac} N(\xi,\zeta)=\max_{\zeta\in\laa}\inf_{\xi\in\laac} N(\xi,\zeta)=\inf_{\xi\in\laac} N(\xi,\zeta^*),
\end{equation*}
where $\zeta^*\in\laa$ delivers the maximum.
\end{proof}


\subsection{Approximation with continuous controls}\label{sec:approx}
We now prove Proposition \ref{thm:conv_lipsch} by constructing a sequence $(\xi^{n})$ of Lipschitz continuous processes with the Lipschitz constant for each process bounded by $n$ for all $\omega$. This uniform bound on the Lipschitz constant is not used in this paper as we only need that each of the processes $(\xi^{n}_t)$ has absolutely continuous trajectories with respect to the Lebesgue measure on $[0,T)$ so that it belongs to $\laa_{ac}(\ef^1_t)$.

\begin{proof}[Proof of Proposition \ref{thm:conv_lipsch}]

Fix $\zeta\in\laa(\ef^2_t)$. We need to show that for any $\xi\in\laa(\ef^1_t)$, there exists a sequence $(\xi^{n})_{n \ge 1} \subset \laac(\ef^1_t)$ such that
\begin{equation}
\limsup_{n\to\infty} N(\xi^{n},\zeta)\les N(\xi,\zeta).
\label{eq-liminf-M}
\end{equation}

We will explicitly construct absolutely continuous $\xi^{n}$ that approximate $\xi$ in a suitable sense. As $N(\xi, \zeta)$ does not depend on the choice of  \cadlag representatives, by Definition \ref{def:integral}, without loss of generality we assume that $\xi \in \mcalAcirc(\ef^1_t)$ and $\zeta \in \mcalAcirc(\ef^2_t)$. Define a function $\phi^n_t = (nt)\wedge 1\vee 0$. Let $\xi^{n}_t = \int_{[0, t]} \phi^n_{t-s} \ud\xi_s$ for $t\in[0,T)$, and $\xi^{n}_T = 1$. We shall show that $(\xi^{n}_t)$ is $n$-Lipschitz, hence absolutely continuous on $[0, T)$.
Note that $\phi^n_t\equiv 0$ for $t\les 0$, and therefore $\xi^{n}_t = \int_{[0, T]} \phi^n_{t-s} \ud\xi_s$ for $t \in [0, T)$. For arbitrary $t_1,t_2\in[0,T)$ we have
\begin{align*}
|\xi^{n}_{t_1}-\xi^{n}_{t_2}| 
&= 
\left|\int_{[0, T]} (\phi^n_{t_1-s}-\phi^n_{t_2-s}) \ud\xi_s\right| 
\les
\int_{[0, T]} |\phi^n_{t_1-s}-\phi^n_{t_2-s}| \ud\xi_s\\
&\les 
\int_{[0, T]} n|(t_1-s)-(t_2-s)| \ud\xi_s 
=
\int_{[0, T]} n|t_1-t_2| d\xi_s=n|t_1-t_2|,
\end{align*}
where the first inequality is Jensen's inequality (which is applicable since $\xi(\omega)$ is a cumulative distribution function on $[0, T]$ for each $\omega$), and the second inequality follows by the definition of $\phi^n$.

We will verify the assumptions of Proposition \ref{prop-specific-convergence-3}. Clearly the sequence $(\xi^n)$ is non-decreasing in $n$, as the measure $\ud \xi(\omega)$ is positive for each $\omega \in \Omega$ and the sequence $\phi^n$ is non-decreasing. By the construction of $\xi^{n}$ we have $\xi^{n}_0 = 0 \to \xi_{0-}$ as $n\to\infty$. Moreover, for any $t \in (0, T)$ and $n > 1/t$
\begin{align*}
\xi^{n}_t = \int_{[0, t)}\phi^n_{t-s}\ud\xi_s=\xi_{t-\tfrac{1}{n}}+\int_{(t-\tfrac{1}{n}, t)}n(t-s)\ud\xi_s,
\end{align*}
where the first equality uses that $\phi^n_{0}=0$, so that jumps of $\xi$ at time $t$ give zero contribution, and the second one uses the definition of $\phi^n$. Letting $n\to\infty$ we obtain $\xi^{n}_t\to \xi_{t-}$ as the second term above vanishes since
\begin{align*}
0\le \int_{(t-\tfrac{1}{n}, t)}n(t-s)\ud\xi_s\le \xi_{t-} - \xi_{t-\tfrac{1}{n}}\to 0. 
\end{align*}
The continuity of $\xi^n$ on $[0, T)$ and Proposition \ref{prop-specific-convergence-3} imply that
\begin{equation*}
\lim_{n\to\infty}\ee\bigg[\int_{[0, T)}f_t(1-\zeta_{t})\ud\xi^{n}_t\bigg]
=
\lim_{n\to\infty}\ee\bigg[\int_{[0, T)}f_t(1-\zeta_{t-})\ud\xi^{n}_t\bigg]
= 
\ee\bigg[\int_{[0, T)}f_t(1-\zeta_{t})\ud\xi_t\bigg],
\end{equation*}
and $\lim_{n \to \infty} \xi^{n}_{T-} = \xi_{T-}$ so that
\[
\lim_{n\to\infty}\Delta\xi^{n}_T =\Delta\xi_T,
\]
since $\xi^{n}_T=1$ for all $n\ge 1$.
The dominated convergence theorem (applied to the second integral below) also yields
\begin{equation}\label{eqn:M_ij_conv}
\begin{aligned}
\lim_{n\to\infty} N(\xi^{n},\zeta)
&=\lim_{n\to\infty}\ee\bigg[\int_{[0, T)}f_{t}(1-\zeta_{t})\ud\xi^{n}_{t}+ \int_{[0, T)} g_t(1-\xi^{n}_{t})\ud\zeta_t + h_T\Delta\xi^{n}_T\Delta\zeta_T\bigg]\\
&= \ee\bigg[\int_{[0, T)}f_{t}(1-\zeta_{t})\ud\xi_{t} + \int_{[0, T)} g_t(1-\xi_{t-})\ud\zeta_t + h_T\Delta\xi_T\Delta\zeta_T\bigg].
\end{aligned}
\end{equation}
Note that
\begin{align}\label{eq-remove-common-jumps}
N(\xi,\zeta)&=\ee\bigg[\!\int_{[0, T)}\! f_t(1-\zeta_{t})\ud\xi_t +\! \int_{[0, T)}\! g_t(1\!-\xi_t)\ud\zeta_t + \sum_{t \in [0, T]} h_t \Delta\xi_t\Delta\zeta_t\bigg]\notag\\
&= \ee\bigg[\!\int_{[0, T)}\! f_t(1-\zeta_{t})\ud\xi_t +\! \int_{[0, T)}\! g_t(1-\xi_{t-})\ud\zeta_t
+\! \sum_{t \in [0, T)}\!\! (h_t-g_t)\Delta\xi_t\Delta\zeta_t + h_T\Delta\xi_T\Delta\zeta_T\bigg]\\
&\ges \ee\bigg[\!\int_{[0, T)}\! f_t(1-\zeta_{t})\ud\xi_t +\! \int_{[0, T)}\! g_t(1-\xi_{t-})\ud\zeta_t + h_T\Delta\xi_T\Delta\zeta_T\bigg],\notag
\end{align}
where the last inequality is due to Assumption \ref{eq-order-cond}. Combining this with \eqref{eqn:M_ij_conv} completes the proof of \eqref{eq-liminf-M}.
\end{proof}

\subsection{Relaxation of Assumption \ref{ass:regular}}\label{sec:relax}

Assumption \ref{ass:regular} which requires that the payoff processes be regular can be relaxed to allow for a class of jumps including predictable ones with nonzero conditional mean (i.e., violating regularity, see Eq. \eqref{eq:cond-reg}). In this section we extend Theorem \ref{th-value-cont-strat} and Proposition \ref{thm:conv_lipsch} to the case of Assumption \ref{ass:regular_gen} with $(\hat g_t)$ from the decomposition of the payoff process $g$ being non-decreasing. In this case we must `smoothen' the generating process $\xi$ of the minimiser in order to guarantee the desired semi-continuity properties of the game's expected payoff (see Remark \ref{rem:contrad}). Arguments when $(\hat f_t)$ from the decomposition of $f$ in Assumption \ref{ass:regular_gen} is non-increasing are analogous thanks to the symmetry of the set-up pointed out in Remark \ref{rem:ineq}. However, in that case we restrict strategies of the maximiser to absolutely continuous generating processes $\zeta\in\laac(\ef^2_t)$ and the first player (minimiser) picks $\xi\in\laa(\ef^1_t)$.

\begin{theorem}\label{th-value-cont-strat_gen}
Under assumptions \ref{eq-integrability-cond}, \ref{ass:regular_gen}, \ref{eq-order-cond}-\ref{ass:filtration}, (with $\hat g$ non-decreasing) the game \eqref{eq-value-cont-restriction} has a value, i.e.
\begin{equation*}
W_{*}=W^{*}:=W.
\end{equation*}
Moreover, the $\zeta$-player (maximiser) has an optimal strategy, i.e. there exists $\zeta^*\in\laa(\ef^2_t)$ such that
\begin{equation*}
\inf_{\xi\in\laac(\ef^1_t)} N(\xi,\zeta^*)=W.
\end{equation*}
\end{theorem}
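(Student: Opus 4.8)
The plan is to re-run the argument behind Theorem \ref{th-value-cont-strat} essentially verbatim. The sets $\laac(\ef^1_t)$ and $\laa(\ef^2_t)$ are still convex, $\laa(\ef^2_t)$ is still weakly compact by Lemma \ref{lem-strat-set-compact}, and $N$ is still linear (hence concave/convex) in each argument by \eqref{eq-functional-in-terms-of-controls}; so once the semicontinuity properties of Lemma \ref{lem:semi-cont} are re-established under Assumption \ref{ass:regular_gen} (with $\hat g$ non-decreasing), Sion's theorem (Theorem \ref{th-the-Sion}) together with its Komiya refinement yields $W_*=W^*=W$ and the maximiser's optimiser $\zeta^*$ exactly as before. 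Thus the whole task reduces to proving the analogue of Lemma \ref{lem:semi-cont}. I would decompose $f=\tl f+\hat f$ and $g=\tl g+\hat g$ and, using linearity of $N$, split each semicontinuity statement into a ``regular part'' built from $\tl f,\tl g$, handled word-for-word as in Lemma \ref{lem:semi-cont} since $\tl f,\tl g$ are regular, and a ``jump part'' carrying $\hat f,\hat g$, which is the only genuinely new content.

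Following the contradiction-and-subsequence scheme of Lemma \ref{lem:semi-cont}, I would then isolate exactly which jump terms need a new argument. For the upper semicontinuity of $N(\xi,\cdot)$ (fix $\xi\in\laac(\ef^1_t)$, take $\zeta^n\to\zeta$ with $(\lambda\times\prob)$\ae convergence), the $f$-integral $\int_{[0,T)}\hat f_t(1-\zeta^n_t)\ud\xi_t$ converges by dominated convergence because $\ud\xi_t$ is atomless on $[0,T)$; the terminal term $(h_T-g_T)\Delta\xi_T\Delta\zeta^n_T$ is controlled by Lemma \ref{prop-terminal-time-jump-limit} using $h_T\ges g_T$; and the $\tl g$ contribution to $\int_{[0,T]}g_t(1-\xi_{t-})\ud\zeta^n_t$ converges by Proposition \ref{prop:r-convergence}. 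The only new term is $\int_{[0,T]}\hat g_t(1-\xi_{t-})\ud\zeta^n_t$. Symmetrically, for the lower semicontinuity of $N(\cdot,\zeta)$ (fix $\zeta$, take $\xi^n\to\xi$ in $\laac(\ef^1_t)$) the $g$-integrals converge by dominated convergence, the terminal term is handled by Lemma \ref{prop-terminal-time-jump-limit}, the $\tl f$ part by Corollary \ref{cor-specific-convergence-2}, and the only new term is $\int_{[0,T)}\hat f_t(1-\zeta_t)\ud\xi^n_t+\hat f_T\Delta\xi^n_T\Delta\zeta_T$.

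The $\hat f$ term is the easy one and needs no sign restriction, which is precisely why $\hat f$ may jump in either direction in this case. Since the integrators $\xi^n$ are absolutely continuous, the atoms of $\ud\xi^n_t$ on $[0,T)$ vanish, so after integrating by parts the term reduces to $\ee[\int \xi^n_t\,\ud H_t]$ plus boundary contributions, where $H_t=\hat f_t(1-\zeta_t)$ is \cadlag of integrable bounded variation; as $\xi^n_t\to\xi_t$ pathwise at every $t$ (Lemma \ref{lem:cadlag_convergence}, $\xi$ being continuous) with $|\xi^n|\le 1$, dominated convergence against the finite measure $\ud H_t\,\prob(\ud\omega)$ gives full convergence, the boundary terms being handled by convergence of left-limits as in Corollary \ref{cor:lim_R_in-t-}. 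This is the analogue of Corollary \ref{cor-specific-convergence-2} with ``regular'' replaced by ``bounded variation''.

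The hard part, and the reason $\hat g$ non-decreasing is assumed, is the $\hat g$ term. Setting $Y_t:=1-\xi_{t-}$, which is continuous, adapted and nonnegative on $[0,T]$ (with $Y_T=\Delta\xi_T\ges 0$), and using the layer-cake representation $\hat g_t=\int_{(0,t]}\ud\hat g_s$, Tonelli's theorem gives
\[
\int_{[0,T]}\hat g_t\,Y_t\,\ud\zeta^n_t=\int_{(0,T]}\Big(\int_{[s,T]}Y_t\,\ud\zeta^n_t\Big)\ud\hat g_s .
\]
Pathwise, Lemma \ref{lem:cadlag_convergence} together with $\zeta^n_T=\zeta_T=1$ yields weak convergence of the probability measures $\ud\zeta^n(\omega)\Rightarrow\ud\zeta(\omega)$ on $[0,T]$, so the Portmanteau upper bound applied to the bounded upper semicontinuous integrand $Y\ind{[s,T]}$ on the closed set $[s,T]$ gives $\limsup_{n}\int_{[s,T]}Y_t\,\ud\zeta^n_t\les\int_{[s,T]}Y_t\,\ud\zeta_t$ for every $s$ and $\prob$\ae $\omega$. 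Since $\hat g$ is non-decreasing, $\ud\hat g_s\,\prob(\ud\omega)$ is a finite positive measure and the inner integrals are bounded by $1$, so reverse Fatou upgrades this to $\limsup_{n}\ee[\int_{[0,T]}\hat g_t Y_t\ud\zeta^n_t]\les\ee[\int_{[0,T]}\hat g_t Y_t\ud\zeta_t]$, the required upper semicontinuity. The positivity of $\ud\hat g$ and of $Y$ is used decisively: with $\hat g$ non-increasing the inequality would reverse and semicontinuity would fail, consistent with Remark \ref{rem:contrad}. With both jump terms settled, the semicontinuity analogue of Lemma \ref{lem:semi-cont} holds and the Sion argument of Theorem \ref{th-value-cont-strat} closes the proof.
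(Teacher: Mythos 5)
Your overall reduction is the same as the paper's: keep the Sion/Komiya machinery of Theorem \ref{th-value-cont-strat} intact and re-prove only the semicontinuity statements of Lemma \ref{lem:semi-cont} under \ref{ass:regular_gen}, splitting $N$ by linearity into a regular part (handled verbatim) and a jump part. Your treatment of the $\hat g$ term is correct and is a genuinely different route from the paper's: the paper decomposes $\hat g_t=\sum_k X^g_k\ind{\{t\ge\eta^g_k\}}$ (Dellacherie--Meyer), proves a dedicated limsup result (Proposition \ref{prop:A}) for integrals of each indicator against the measures $\ud R^n_t=(1-\xi_{t-})\ud\zeta^n_t$, and sums with reverse Fatou over the counting measure; you instead use the layer-cake/Tonelli representation, pathwise weak convergence $\ud\zeta^n(\omega)\Rightarrow\ud\zeta(\omega)$ (which does follow from Lemma \ref{lem:cadlag_convergence} plus $\zeta^n_T=\zeta_T=1$), the Portmanteau inequality for the upper semicontinuous integrand $Y\ind{[s,T]}$, and reverse Fatou over the finite positive measure $\ud\hat g_s\,\prob(\ud\omega)$. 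That buys a more classical weak-convergence argument at the price of nothing; the sign hypothesis on $\hat g$ enters in exactly the same place in both proofs.

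However, your treatment of the $\hat f$ term has a genuine gap. After integration by parts the varying term $\ee[\int_{[0,T)}\xi^n_t\,\ud H_t]$ indeed converges by dominated convergence, but the boundary contribution is $H_{T-}\,\xi^n_{T-}$, and your justification --- ``convergence of left-limits as in Corollary \ref{cor:lim_R_in-t-}'' --- is false in this setting. Corollary \ref{cor:lim_R_in-t-} is proved only for the monotone convolution approximations of Proposition \ref{prop-specific-convergence-3} (where $\xi^n_t\uparrow\xi_{t-}$); it does not apply to an arbitrary sequence $\xi^n\to\xi$ in $\es$ with $\xi^n,\xi\in\laac$, and in fact $\xi^n_{T-}\to\xi_{T-}$ can fail there: take $\xi_t=t/2$ on $[0,T)$ with $\Delta\xi_T=\tfrac12$, and let $\xi^n$ coincide with $\xi$ up to $T-\tfrac1n$ and then rise continuously to $1$ on $(T-\tfrac1n,T)$; then $\xi^n\to\xi$ in $\es$ and pointwise on $[0,T)$, yet $\xi^n_{T-}=1\neq\tfrac12=\xi_{T-}$. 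The step is rescuable, but only by an argument you never make: combining the boundary term with the terminal term and using $\Delta\hat f_T=0$ from \ref{ass:regular_gen} together with $\xi^n_T=\zeta_T=1$, one gets
\begin{equation*}
H_{T-}\,\xi^n_{T-}+\hat f_T\,\Delta\xi^n_T\,\Delta\zeta_T
=\hat f_T(1-\zeta_{T-})\big[\xi^n_{T-}+(1-\xi^n_{T-})\big]
=\hat f_T(1-\zeta_{T-}),
\end{equation*}
which is independent of $n$ and equals the corresponding expression for $\xi$, so no convergence of $\xi^n_{T-}$ is needed. Note that $\Delta\hat f_T=0$ is essential here (if $\Delta\hat f_T\neq0$ the bracket becomes $\hat f_T-\Delta\hat f_T\,\xi^n_{T-}$ and the limit is wrong); this is precisely the hypothesis the paper exploits by choosing a decomposition with $X^f_k\ind{\{\eta^f_k=T\}}=0$ so that the equality case \eqref{eqn:theta_t_eq} of Proposition \ref{prop:A} applies. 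As written, your proof never invokes $\Delta\hat f_T=0$, and the step where you need it is exactly the one you justified incorrectly.
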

\begin{Proposition}\label{thm:conv_lipsch_gen}
Under assumptions \ref{eq-integrability-cond}, \ref{ass:regular_gen}, \ref{eq-order-cond}-\ref{ass:filtration}, (with $\hat g$ non-decreasing) for any $\zeta \in \laa(\ef^2_t)$ and $\xi \in \laa(\ef^1_t)$, there is a sequence $\xi^n \in \laac(\ef^1_t)$ such that
\[
\limsup_{n \to \infty} N(\xi^n, \zeta) \le N(\xi, \zeta).
\]
\end{Proposition}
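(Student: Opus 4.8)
The plan is to reuse verbatim the smoothing construction from the proof of Proposition \ref{thm:conv_lipsch} and to control separately the new, non-regular part of the driving integral. Passing to \cadlag representatives $\xi\in\mcalAcirc(\ef^1_t)$, $\zeta\in\mcalAcirc(\ef^2_t)$, I set $\phi^n_t=(nt)\wedge 1\vee 0$ and define $\xi^n_t=\int_{[0,t]}\phi^n_{t-s}\,\ud\xi_s$ for $t\in[0,T)$ and $\xi^n_T=1$. Exactly as in Proposition \ref{thm:conv_lipsch}, these $\xi^n$ are $n$-Lipschitz on $[0,T)$ (hence lie in $\laac(\ef^1_t)$), are non-decreasing in $n$, continuous on $[0,T)$, and satisfy $\xi^n_t\to\xi_{t-}$ for every $t\in(0,T)$ as well as $\xi^n_{T-}\to\xi_{T-}$, so that $\Delta\xi^n_T\to\Delta\xi_T$. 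Using \eqref{eq-functional-in-terms-of-controls} and the decompositions $f=\tl f+\hat f$, $g=\tl g+\hat g$ of Assumption \ref{ass:regular_gen}, I then split $N(\xi^n,\zeta)$ by linearity into a driving term $\ee[\int_{[0,T)}f_t(1-\zeta_t)\ud\xi^n_t]$, a dual term $\ee[\int_{[0,T)}g_t(1-\xi^n_t)\ud\zeta_t]$, and a terminal term $\ee[h_T\Delta\xi^n_T\Delta\zeta_T]$, and pass to the limit in each piece.

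The regular pieces are handled as before. For $\tl f$, Proposition \ref{prop-specific-convergence-3} (applicable since $\tl f$ is regular) gives $\lim_n\ee[\int_{[0,T)}\tl f_t(1-\zeta_t)\ud\xi^n_t]=\ee[\int_{[0,T)}\tl f_t(1-\zeta_t)\ud\xi_t]$. For the dual term, both $\tl g$ and $\hat g$ are bounded (being in $\mcalL$), so from $\xi^n_t\to\xi_{t-}$ and dominated convergence I obtain $\lim_n\ee[\int_{[0,T)}g_t(1-\xi^n_t)\ud\zeta_t]=\ee[\int_{[0,T)}g_t(1-\xi_{t-})\ud\zeta_t]$; here neither regularity nor monotonicity of $g$ enters, because the integrator $\ud\zeta$ is held fixed. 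The terminal term converges to $\ee[h_T\Delta\xi_T\Delta\zeta_T]$ by $\Delta\xi^n_T\to\Delta\xi_T$ and dominated convergence.

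The crux is the non-regular driving piece $\ee[\int_{[0,T)}\hat f_t(1-\zeta_t)\ud\xi^n_t]$, for which Propositions \ref{prop-specific-convergence-3} and \ref{prop:r-convergence} are unavailable. My plan is to exploit the explicit structure of $\xi^n$ through the auxiliary processes $R^n_t=\int_{[0,t]}(1-\zeta_{s-})\ud\xi^n_s$ and $R_t=\int_{[0,t]}(1-\zeta_s)\ud\xi_s$ of \eqref{eqn:def_Rn}. Since $\xi^n$ is continuous on $[0,T)$, $\int_{[0,T)}\hat f_t(1-\zeta_t)\ud\xi^n_t=\int_{[0,T)}\hat f_t\,\ud R^n_t$, and because $\hat f$ is piecewise constant of integrable variation with $\hat f_0=0$ and $\Delta\hat f_T=0$, integration by parts yields $\int_{[0,T)}\hat f_t\,\ud R^n_t=\hat f_{T-}R^n_{T-}-\sum_{t<T}R^n_t\Delta\hat f_t$. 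I would pass to the limit using the pathwise convergence $R^n_t\to R_{t-}$ from \eqref{eq:Rcon}, together with $R^n_{T-}\to R_{T-}$ from Corollary \ref{cor:lim_R_in-t-}, dominated convergence over the summable jumps $\sum_{t<T}|\Delta\hat f_t|<\infty$, and a final dominated-convergence step in $\omega$ (the integrand is bounded by $\sup_t|\hat f_t|\in\mcalL$). Reversing the integration by parts identifies the pathwise limit $\hat f_{T-}R_{T-}-\sum_{t<T}R_{t-}\Delta\hat f_t$ with $\int_{[0,T)}\hat f_t(1-\zeta_t)\ud\xi_t$: the extra contribution $\sum_{t<T}\Delta\hat f_t(1-\zeta_t)\Delta\xi_t$ generated at the atoms of $\ud\xi$ is precisely what converts the left value $\hat f_{t-}$ into the right value $\hat f_t$ there. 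Combined with the $\tl f$ limit, this gives $\lim_n\ee[\int_{[0,T)}f_t(1-\zeta_t)\ud\xi^n_t]=\ee[\int_{[0,T)}f_t(1-\zeta_t)\ud\xi_t]$.

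Assembling the three limits, $\lim_n N(\xi^n,\zeta)=\ee[\int_{[0,T)}f_t(1-\zeta_t)\ud\xi_t+\int_{[0,T)}g_t(1-\xi_{t-})\ud\zeta_t+h_T\Delta\xi_T\Delta\zeta_T]$, which by the algebraic rearrangement \eqref{eq-remove-common-jumps} equals $N(\xi,\zeta)-\ee[\sum_{t<T}(h_t-g_t)\Delta\xi_t\Delta\zeta_t]$; since $h_t\ge g_t$ by Assumption \ref{eq-order-cond}, the subtracted term is non-negative and $\limsup_n N(\xi^n,\zeta)\le N(\xi,\zeta)$ follows. I expect the integration-by-parts identification of the $\hat f$-limit to be the main obstacle, being the only point where the loss of regularity bites and where the interplay between the left limits produced by the smoothing and the atoms of $\ud\xi$ must be tracked precisely. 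Notably, this approximation step appears insensitive to the direction of the jumps of $\hat f$ and $\hat g$; the sign restriction on $\hat g$ is instead what will be required in the companion semicontinuity result, Theorem \ref{th-value-cont-strat_gen}.
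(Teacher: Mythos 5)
Your proof is correct and follows essentially the same route as the paper: the same convolution smoothing $\xi^n_t=\int_{[0,t]}\phi^n_{t-s}\,\ud\xi_s$, the same treatment of the regular parts, the $\hat g$-term (plain dominated convergence, no sign restriction used) and the terminal term, the same auxiliary processes $R^n$, $R$ with the convergence inputs \eqref{eq:Rcon} and Corollary \ref{cor:lim_R_in-t-}, and the same final domination step via \eqref{eq-remove-common-jumps}. The only difference is presentational: where you handle the $\hat f$-term by pathwise integration by parts, $\int_{[0,T)}\hat f_t\,\ud R^n_t=\hat f_{T-}R^n_{T-}-\sum_{t<T}R^n_t\Delta\hat f_t$, followed by dominated convergence over the jumps and over $\omega$, the paper writes the identical quantity through the enumeration \eqref{eqn:decomposition_piecewise} as $\sum_{k}(-1)^k X^f_k\big(R^n_{T-}-R^n_{\eta^f_k-}\big)$ and applies dominated convergence in $(k,\omega)$ --- the two computations coincide term by term.
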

\begin{proof}[{\bf Proof of Theorem \ref{thm:main2}}]
The proof of the existence of the value is identical to the proof Theorem \ref{thm:main} but with references to Theorem \ref{th-value-cont-strat} and Proposition \ref{thm:conv_lipsch} replaced by the above results. 

For the existence of the saddle point, the additional requirement that $\hat g$ be non-decreasing {\em and} $\hat f$ be non-increasing guarantees the complete symmetry of the problem when swapping the roles of the two players as in Remark \ref{rem:ineq}. Thus, the same proof as in Theorem \ref{thm:main} can be repeated verbatim.
\end{proof}

In the rest of the section we prove Theorem \ref{th-value-cont-strat_gen} and Proposition \ref{thm:conv_lipsch_gen}. Processes $\hat f, \hat g$ have the following decomposition according to Theorem VI.52 in \cite{DellacherieMeyer} and remarks thereafter: there are $(\ef_t)$-stopping times $(\eta^f_k)_{k \ge 1}$ and $(\eta^g_k)_{k \ge 1}$, non-negative $\ef_{\eta^f_k}$-measurable random variables $X^f_k$, $k \ge 1$, and non-negative $\ef_{\eta^g_k}$-measurable random variables $X^g_k$, $k \ge 1$, such that 
\begin{equation}\label{eqn:decomposition_piecewise}
\hat f_t = \sum_{k=1}^\infty (-1)^k X^f_k \ind{\{t \ge \eta^f_k\}}, \qquad \hat g_t = \sum_{k=1}^\infty X^g_k \ind{\{t \ge \eta^g_k\}}.
\end{equation}
The alternating terms in the sum for $(\hat f_t)$ come from interweaving sequences for the two non-decreasing processes $(\hat f^+_t)$ and $(\hat f^-_t)$ from $\mcalL$ arising from the decomposition of the integrable variation process $(\hat f_t)$ (recall $\hat f_t=\hat f^+_t-\hat f^-_t$). This is for notational convenience and resulting in no mathematical complications as the infinite sum is absolutely convergent. Recall that $\hat g$ is assumed non-decreasing.

The condition that $\hat f_0 = \hat g_0 = 0$ means that $\eta^f_k, \eta^g_k > 0$ for all $k \ge 1$. Since $\hat f, \hat g$ have integrable variation (in the sense of \cite[p. 115]{DellacherieMeyer}), the infinite sequences in \eqref{eqn:decomposition_piecewise} are dominated by integrable random variables $X^f$ and $X^g$: for any $t \in [0, T]$
\begin{align}\label{eq:Xfg}
|\hat f_t| \le X^f := \sum_{k=1}^\infty X^f_k, \qquad \text{and}\qquad \hat g_t \le X^g := \sum_{k=1}^\infty X^g_k.
\end{align}

To handle convergence of integrals with piecewise-constant processes, we need to extend the results of Proposition \ref{prop:r-convergence}.
\begin{Proposition}\label{prop:A}
For a filtration $(\mcalG_t)\subseteq (\ef_t)$, consider $(\rho^n)_{n \ge 1} \subset \tl\mcalAcirc(\mcalG_t)$ and $\rho \in \tl\mcalAcirc(\mcalG_t)$ with
\[
\prob\Big(\Big\{\omega \in \Omega:\ \lim_{n\to\infty}\rho^n_t(\omega)=\rho_t(\omega),\quad\text{for all $t\in C_\rho(\omega)\cup\{T\}$} \Big\} \Big) = 1.
\]
Then for any $\ef$-measurable random variables $\theta\in(0,T]$ and $X\in[0,\infty)$  with $\ee[X] < \infty$ we have
\begin{equation}\label{eqn:theta_t}
\limsup_{n\to\infty}\ee\Big[\int_{[0, T]} \ind{\{t \ge \theta\}} X \ud\rho^n_t\Big] \le \ee\Big[\int_{[0, T]} \ind{\{t \ge \theta\}} X \ud\rho_t\Big].
\end{equation}
Furthermore, if $\prob (\{\omega:\ \theta(\omega) \in C_\rho(\omega) \text{ or } X(\omega) = 0\}) = 1$, then
\begin{equation}\label{eqn:theta_t_eq}
\lim_{n\to\infty}\ee\Big[\int_{[0, T]} \ind{\{t \ge \theta\}} X \ud\rho^n_t\Big] = \ee\Big[\int_{[0, T]} \ind{\{t \ge \theta\}} X \ud\rho_t\Big].
\end{equation}
\end{Proposition}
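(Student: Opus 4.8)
The plan is to reduce the proposition to a pathwise statement about the left-limits $\rho^n_{\theta-}$ and then pass to the limit in expectation. For each fixed $\omega$ the quantities $X(\omega)$ and $\theta(\omega)$ are constant in $t$, so the pathwise Lebesgue--Stieltjes integral collapses to
\[
\int_{[0, T]} \ind{\{t \ge \theta\}} X \,\ud\rho^n_t = X\,\big(\rho^n_T - \rho^n_{\theta-}\big),
\]
and analogously for $\rho$, using that $\ud\rho^n$ assigns mass $\rho^n_T - \rho^n_{\theta-}$ to $[\theta, T]$. Since $T \in C_\rho(\omega) \cup \{T\}$ trivially, the hypothesis gives $\rho^n_T \to \rho_T$ on a full-measure set $\Omega_0$, so the whole question concerns one-sided bounds and convergence of the term $\rho^n_{\theta-}$.

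First I would establish the pathwise lower bound: for every $\omega \in \Omega_0$,
\[
\liminf_{n\to\infty}\rho^n_{\theta-}(\omega) \ge \rho_{\theta-}(\omega).
\]
Because $\theta > 0$ and the monotone map $t \mapsto \rho_t(\omega)$ has at most countably many discontinuities, I can select continuity points $t_k \uparrow \theta(\omega)$ with $t_k < \theta(\omega)$. Monotonicity gives $\rho^n_{\theta-} \ge \rho^n_{t_k}$, the hypothesis gives $\rho^n_{t_k} \to \rho_{t_k}$, whence $\liminf_n \rho^n_{\theta-} \ge \rho_{t_k} \to \rho_{\theta-}$ as $k \to \infty$; this is exactly the sandwiching used in \eqref{eq:lim-nt}. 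Combined with $\rho^n_T \to \rho_T$ and $X \ge 0$ this yields, pathwise on $\Omega_0$,
\[
\limsup_{n\to\infty} X\big(\rho^n_T - \rho^n_{\theta-}\big) \le X\big(\rho_T - \rho_{\theta-}\big).
\]
Since $0 \le X(\rho^n_T - \rho^n_{\theta-}) \le X\rho^n_T \le X$ with $\ee[X] < \infty$, the reverse Fatou lemma upgrades this to the expectation inequality \eqref{eqn:theta_t}.

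For the equality \eqref{eqn:theta_t_eq} I would add the matching upper bound $\limsup_n \rho^n_{\theta-} \le \rho_{\theta-}$ on the event $\{\theta \in C_\rho\}$. When $\theta < T$ I take continuity points $s_k \downarrow \theta$ with $s_k > \theta$ and use $\rho^n_{\theta-} \le \rho^n_{s_k} \to \rho_{s_k} \to \rho_\theta = \rho_{\theta-}$ (right-continuity of $\rho$ together with $\theta \in C_\rho$); when $\theta = T$ I instead use $\rho^n_{T-} \le \rho^n_T \to \rho_T = \rho_{T-}$, which relies on $T \in C_\rho$. Together with the liminf bound this gives $\rho^n_{\theta-} \to \rho_{\theta-}$, hence $X(\rho^n_T - \rho^n_{\theta-}) \to X(\rho_T - \rho_{\theta-})$ pointwise on $\Omega_0 \cap \{\theta \in C_\rho\}$; on $\{X = 0\}$ both sides vanish, so under the extra hypothesis the convergence holds $\prob$\as The dominated convergence theorem with the integrable majorant $X$ then delivers \eqref{eqn:theta_t_eq}.

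I expect the main obstacle to be the careful handling of the left-limit $\rho^n_{\theta-}$ at the \emph{random} time $\theta$: verifying the two-sided continuity-point sandwiching simultaneously for (almost) all $\omega$, and correctly treating the boundary case $\theta = T$, where no continuity points lie to the right and one must fall back on $\rho^n_T \to \rho_T$. Once these pointwise bounds are secured, the passage to expectations via reverse Fatou and dominated convergence is routine.
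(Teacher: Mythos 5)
Your proof is correct and follows essentially the same route as the paper's: the reduction of the integral to $X(\rho^n_T - \rho^n_{\theta-})$, the key lower bound $\liminf_{n\to\infty}\rho^n_{\theta-} \ge \rho_{\theta-}$ obtained from monotonicity and continuity points of $\rho$ approaching $\theta$ from below, and then Fatou-type limit theorems (your single reverse-Fatou application is just a repackaging of the paper's split into dominated convergence for $X\rho^n_T$ and Fatou for $X\rho^n_{\theta-}$). The only, immaterial, difference is in the equality part: the paper avoids proving an upper bound on $\limsup_{n\to\infty}\rho^n_{\theta-}$ by instead comparing with $\rho^n_\theta$ (using $\rho^n_{\theta-}\le\rho^n_\theta$ and dominated convergence at the point $\theta$) and sandwiching at the level of expectations, whereas you establish full pathwise convergence $\rho^n_{\theta-}\to\rho_{\theta-}$ via a second sandwich from above, with the boundary case $\theta=T$ treated separately, before applying dominated convergence; both arguments rest on the same continuity hypothesis.
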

\begin{proof}
Let $\Omega_0$ be the set of $\omega \in \Omega$ for which $\rho^n_t(\omega) \to \rho_t(\omega)$ for all $t \in C_\rho(\omega) \cup \{T\}$. 
Fix $\omega \in \Omega_0$. For any $t$ such that $t \in C_\rho(\omega)$ and $t < \theta(\omega)$ (such $t$ always exists as $\theta(\omega) > 0$ and $\rho$ has at most countably many jumps on any bounded interval) we have $\rho^n_t(\omega) \le \rho^n_{\theta(\omega)-}(\omega)$ so that by assumption 
\[
\liminf_{n \to \infty} \rho^n_{\theta(\omega)-} (\omega) \ge \rho_{t} (\omega).
\]
Since $C_{\rho}(\omega)$ is dense in $(0, T)$, by arbitrariness of $t<\theta(\omega)$ we have
\begin{equation}\label{eqn:hash}
\liminf_{n \to \infty} \rho^n_{\theta(\omega)-} (\omega) \ge \rho_{\theta(\omega)-} (\omega).
\end{equation}
We rewrite the integral as follows: $\int_{[0, T]} \ind{\{t \ge \theta\}} X \ud\rho^n_t = X (\rho^n_T - \rho^n_{\theta-})$. Therefore,
\begin{align*}
\limsup_{n\to\infty}\ee\Big[\int_{[0, T]} \ind{\{t \ge \theta\}} X \ud\rho^n_t\Big]
=
\limsup_{n\to\infty}\ee\big[X (\rho^n_T - \rho^n_{\theta-}) \big]
=
\limsup_{n \to \infty} \ee [X \rho^n_T] - \liminf_{n \to \infty} \ee[X \rho^n_{\theta-}].
\end{align*}
The dominated convergence theorem yields that $\lim_{n \to \infty} \ee [X \rho^n_T] = \ee [X \rho_T]$, while applying Fubini's theorem gives
\[
\liminf_{n \to \infty} \ee[X \rho^n_{\theta-}] \ge  \ee[\liminf_{n \to \infty} X \rho^n_{\theta-}] \ge \ee[ X \rho_{\theta-}],
\]
where the last inequality is by \eqref{eqn:hash}. Combining the above estimates completes the proof of \eqref{eqn:theta_t}.

Assume now that $\theta(\omega) \in C_\rho(\omega)$ or $X(\omega) = 0$ for $\prob$\ae $\omega \in \Omega_0$. This and the dominated convergence theorem yield
\[
\ee[X(\rho_T - \rho_{\theta-})] = \ee[X(\rho_T - \rho_{\theta})] = \lim_{n \to \infty} \ee[X(\rho^n_T - \rho^n_{\theta})] \le \limsup_{n \to \infty} \ee[X(\rho^n_T - \rho^n_{\theta-})],
\]
where the last inequality follows from the monotonicity of $\rho^n$. This estimate and \eqref{eqn:theta_t} prove \eqref{eqn:theta_t_eq}.
\end{proof}

\begin{remark}\label{rem:contrad0}
The inequality \eqref{eqn:theta_t} in Proposition \ref{prop:A} can be strict even if $\rho^n_t \to \rho_t$ for all $t \in [0, T]$ because this condition does not imply that $\rho^n_{t-} \to \rho_{t-}$. One needs further continuity assumptions on $(\rho_t)$ to establish equality \eqref{eqn:theta_t_eq}.
\end{remark}

\begin{proof}[{\bf Proof of Theorem \ref{th-value-cont-strat_gen}}]
Compared to the proof of the analogue result under the more stringent condition \ref{ass:regular} (i.e., Theorem \ref{th-value-cont-strat}), we only need to establish lower and upper semicontinuity of the functional $N$, while all other remaining arguments stay valid. For the semicontinuity, we extend arguments of Lemma \ref{lem:semi-cont}. 

\emph{Upper semicontinuity of $N(\xi,\cdot)$}. Fix $\xi\in\laac(\ef^1_t)$ and consider a sequence $(\zeta^{n})_{n \ge 1}\in\laa(\ef^2_t)$ converging to $\zeta \in \laa(\ef^2_t)$ strongly in $\es$. Arguing by contradiction, we assume that there is a subsequence of $(\zeta^n)_{n \ge 1}$ denoted the same with an abuse of notation, that converges $(\prob \times \lambda)\ae$ to $\zeta$ and such that
\begin{equation*}
\lim_{n\to\infty} N(\xi,\zeta^{n}) > N(\xi,\zeta).
\end{equation*}
Without loss of generality, we can further require that $(\zeta^n)_{n \ge 1} \subset \mcalAcirc(\ef^2_t)$ and $\zeta \in \mcalAcirc(\ef^2_t)$. 
Since $\xi$ is absolutely continuous on $[0, T)$, 
\begin{equation}
\lim_{n\to\infty}\ee\bigg[\int_{[0, T)} f_t(1-\zeta^{n}_{t})\ud\xi_t\bigg]=\ee\bigg[\int_{[0, T)} f_t(1-\zeta_{t})\ud\xi_t \bigg]
\label{eq-int-conv-1a}
\end{equation}
by the dominated convergence theorem. For the last two terms of $N(\xi, \zeta^n)$ (recall \eqref{eqn:N_cont}) we have
\begin{align*}
\ee\bigg[\int_{[0, T)} g_t(1-\xi_t)\ud\zeta^n_t + h_T\Delta\xi_T\Delta\zeta^n_T\bigg]
&=
\ee\bigg[\int_{[0, T]} g_t(1-\xi_{t-})\ud\zeta^n_t + (h_T - g_T) \Delta\xi_T\Delta\zeta^n_T\bigg].
\end{align*}
As in the proof of Lemma \ref{lem:semi-cont}, for the regular part $\tl g$ of the process $g$ we have
\begin{equation}\label{eqn:tl_g_conv}
\lim_{n \to \infty} \ee\bigg[\int_{[0, T]} \tl g_t(1-\xi_{t-})\ud\zeta^n_t \bigg] = \ee\bigg[\int_{[0, T]} \tl g_t(1-\xi_{t-})\ud\zeta_t \bigg].
\end{equation}

For the pure jump part $\hat g$ of the process $g$, we will prove that 
\begin{equation}\label{eqn:hat_g_conv}
\limsup_{n \to \infty} \ee\bigg[\int_{[0, T]} \hat g_t(1-\xi_{t-})\ud\zeta^n_t \bigg] \le \ee\bigg[\int_{[0, T]} \hat g_t(1-\xi_{t-})\ud\zeta_t \bigg].
\end{equation}
To this end, let us define
\[
R^n_t = \int_{[0, t]} (1-\xi_{s-})\ud\zeta^n_s, \qquad R_t = \int_{[0, t]} (1-\xi_{s-})\ud\zeta_s, \qquad \text{for $t\in[0,T]$,}
\]
with $R^n_{0-} = R_{0-} = 0$ and then we are going to apply Proposition \ref{prop:A} with $R^n$ and $R$ instead of $\rho^n$ and $\rho$. We need $R^n_t (\omega) \to R_t(\omega)$ as $n\to\infty$ for $t \in C_R(\omega)=C_{\zeta}(\omega)\cup \{t\in[0,T]:\xi_t(\omega)=1\}$, for $\prob$\ae $\omega \in \Omega$. 
The latter is indeed true. Setting $\Omega_0 = \{\omega \in \Omega:\ \lim_{n \to \infty} \zeta^n_t(\omega) = \zeta_t(\omega)\ \forall\, t \in C_\zeta(\omega) \}$, we have $\prob(\Omega_0) = 1$ by Lemma \ref{lem:cadlag_convergence}. For any $\omega \in \Omega_0$ and $t \in C_\zeta(\omega)$, invoking the absolute continuity of $(\xi_t)$, we obtain (omitting the dependence on $\omega$)
\begin{equation*}
\lim_{n \to \infty} R^n_t 
=
\lim_{n \to \infty} \Big[ (1-\xi_t) \zeta^n_t + \int_{[0, t]} \zeta^n_s \ud \xi_s \Big]
=
(1-\xi_t) \zeta_t + \int_{[0, t]} \zeta_s \ud \xi_s = R_t,
\end{equation*}
where the convergence of the second term is the consequence of the dominated convergence theorem and the fact that $\lambda ([0,T]\setminus C_\zeta(\omega)) = 0$ and $\zeta^n_T = \zeta_T = 1$.

For any $k \ge 1$, since $X^g_k \ge 0$, Proposition \ref{prop:A} gives (recall \eqref{eqn:decomposition_piecewise})
\begin{equation}\label{eqn:limsup01}
\limsup_{n \to \infty} \ee \bigg[ \int_{[0, T]} X^g_k \ind{\{t \ge \eta^g_k\}} \ud R^n_t \bigg] \le \ee \bigg[\int_{[0, T]} X^g_k \ind{\{t \ge \eta^g_k\}} \ud R_t \bigg].
\end{equation}
We apply the decomposition of $\hat g$ and then the monotone convergence theorem
\[
\ee\bigg[\int_{[0, T]} \hat g_t(1-\xi_{t-})\ud\zeta^n_t \bigg] = \ee \bigg[ \sum_{k=1}^\infty \int_{[0, T]} X^g_k \ind{\{t \ge \eta^g_k\}} \ud R^n_t \bigg]
= \sum_{k=1}^\infty \ee \bigg[ \int_{[0, T]} X^g_k \ind{\{t \ge \eta^g_k\}} \ud R^n_t \bigg].
\]
Since $\hat g \in \mcalL$ we have the bound (recall \eqref{eq:Xfg})
\[
\sum_{k=1}^\infty \sup_n \ee \bigg[ \int_{[0, T]} X^g_k \ind{\{t \ge \eta^g_k\}} \ud R^n_t \bigg] \le \sum_{k=1}^\infty \ee [ X^g_k ] 
< \infty .
\]
Then we can apply (reverse) Fatou's lemma (with respect to the counting measure on $\mathbb{N}$)
\begin{align*}
\limsup_{n \to \infty} \sum_{k=1}^\infty \ee \bigg[ \int_{[0, T]} X^g_k \ind{\{t \ge \eta^g_k\}} \ud R^n_t \bigg]
&\le
\sum_{k=1}^\infty \limsup_{n \to \infty} \ee \bigg[ \int_{[0, T]} X^g_k \ind{\{t \ge \eta^g_k\}} \ud R^n_t \bigg]\\
&\le
\sum_{k=1}^\infty \ee \bigg[\int_{[0, T]} X^g_k \ind{\{t \ge \eta^g_k\}} \ud R_t \bigg]
=
\ee\bigg[\int_{[0, T]} \hat g_t(1-\xi_{t-})\ud\zeta_t \bigg],
\end{align*}
where the last inequality is due to \eqref{eqn:limsup01} and the final equality follows by monotone convergence and the decomposition of $\hat g$. This completes the proof of \eqref{eqn:hat_g_conv}.

Recalling that $g_T \le h_T$, we obtain from Lemma \ref{prop-terminal-time-jump-limit}
\begin{equation*}
\limsup_{n\to\infty} \ee\big[(h_T-g_T)\Delta\xi_T\Delta\zeta^n_T\big] \les \ee\big[(h_T-g_T)\Delta\xi_T\Delta\zeta_T\big],
\end{equation*}
and combining the latter with \eqref{eqn:tl_g_conv}, \eqref{eqn:hat_g_conv} and \eqref{eq-int-conv-1a} shows that
\begin{align}\label{eq:usc_gen}
\limsup_{n \to \infty} N(\xi, \zeta^n) \le N(\xi, \zeta).
\end{align}
Hence we have a contradiction with $\lim_{n\to\infty} N(\xi,\zeta^n) >N(\xi,\zeta)$, which proves the upper semicontinuity.

\emph{Lower semicontinuity of $N(\cdot,\zeta)$}. The proof follows closely the argument of the proof of Lemma \ref{lem:semi-cont}: we fix $\zeta\in\laa(\ef^2_t)$, consider a sequence $(\xi^{n})_{n\ge 1}\subset\laac(\ef^1_t)$ converging to $\xi \in \laac (\ef^1_t)$ strongly in $\es$, assume that \eqref{eqn:two_terms2} holds and reach a contradiction. We only show how to handle the convergence for $(\hat f_t)$ as all other terms are handled by the proof of Lemma \ref{lem:semi-cont}.

By Lemma \ref{lem:cadlag_convergence} and the continuity of $(\xi_t)$ we have $\prob \big( \lim_{n\to\infty} \xi^{n}_t(\omega) = \xi_t(\omega)\ \forall\,t \in [0,T)\big) = 1$. 
Let
\[
R^n_t = \int_{[0,t]} (1-\zeta_{t-})\ud\xi^n_t, \qquad R_t = \int_{[0,t]} (1-\zeta_{t-})\ud\xi_t,
\]
with $R^n_{0-} = R_{0-} = 0$. Due to the continuity of $(\xi^n_t)$ and $(\xi_t)$ for $t \in [0, T)$, processes $(R^n_t)$ and $(R_t)$ are continuous on $[0, T)$ with a possible jump at $T$. From \eqref{eqn:conv_R} in the proof of Proposition \ref{prop-specific-convergence-2} we conclude that for $\prob$\ae $\omega \in \Omega$
\[
\lim_{n \to \infty} R^n_t(\omega) = R_t(\omega) \quad \text{for all $t \in [0, T]$}.
\]
Since $\Delta \hat f_T = 0$ (see Assumption \ref{ass:regular_gen}), there is a decomposition such that $X^f_k \ind{\{\eta^f_k = T\}} = 0$ $\prob$\as for all $k$. Recalling that $(R_t)$ is continuous on $[0, T)$, we can apply \eqref{eqn:theta_t_eq} in Proposition \ref{prop:A}: for any $k \ge 1$
\[
\lim_{n \to \infty} \ee \bigg[ \int_{[0, T]} X^f_k \ind{\{t \ge \eta^f_k\}} \ud R^n_t \bigg] = \ee \bigg[\int_{[0, T]} X^f_k \ind{\{t \ge \eta^f_k\}} \ud R_t \bigg].
\]
Combining the latter with decomposition \eqref{eqn:decomposition_piecewise} and the dominated convergence theorem (with the bound $X^f$) we obtain
\[
\lim_{n \to \infty} \ee \bigg[ \int_{[0, T]} \hat f_t \ud R^n_t \bigg] = \ee \bigg[\int_{[0, T]} \hat f_t \ud R_t \bigg].
\]
Arguing as in the proof of Corollary \ref{cor-specific-convergence-2}, we have
\begin{equation*}
\lim_{n \to \infty} \ee \bigg[ \int_{[0, T)} \hat f_t (1-\zeta_t) \ud \xi^n_t + \hat f_T \Delta \zeta_T \Delta \xi^n_T\bigg] = \ee \bigg[\int_{[0, T)} \hat f_t (1-\zeta_t) \ud \xi_t + \hat f_T \Delta \zeta_T \Delta \xi_T \bigg].
\end{equation*}
Corollary \ref{cor-specific-convergence-2} implies an analogous convergence for $(\tl f_t)$ and the rest of the proof of lower semicontinuity from Lemma \ref{lem:semi-cont} applies.
\end{proof}

\begin{remark}\label{rem:contrad}
In the arguments above, item (4) in Assumption \ref{ass:regular_gen} implies in particular that the payoff process $(g_t)$ does not have predictable jumps that are $\prob$\as negative. This assumption cannot be further relaxed as this may cause the proof of the upper semicontinuity in Theorem \ref{th-value-cont-strat_gen} to fail. Recall that the process $(g_t)$ corresponds to the payoff of the second player and her strategy $(\zeta_t)$ is not required to be absolutely continuous. For example, fix $t_0\in(0,T)$ and take $g_t=1-\ind{\{t\ge t_0\}}$, $\zeta_t = \ind{\{t\ges t_0\}}$ and $\xi_t = \ind{\{t = T\}}$. Let us consider the sequence $\zeta^n_t =\ind{\{t\ges t_0-\frac{1}{n}\}}$, which converges to $\zeta$ pointwise and also strongly in $\es$. We have
\begin{equation*}
\int_{[0, T]} g_t(1-\xi_{t-})\ud\zeta^n_t\equiv 1, \:\:\: \text{for all $n$'s, but}\:\:\: \int_{[0, T]} g_t(1-\xi_{t-})\ud\zeta_t\equiv 0,
\end{equation*}
hence \eqref{eqn:hat_g_conv} fails and so does \eqref{eq:usc_gen}.
\end{remark}

\begin{proof}[{\bf Proof of Proposition \ref{thm:conv_lipsch_gen}}]
Here, we also only show how to extend the proof of Proposition \ref{thm:conv_lipsch} to the more general setting. Fix $\zeta \in \mcalAcirc(\ef^2_t)$ and $\xi \in \mcalAcirc(\ef^1_t)$. Construct a sequence $(\xi^n) \subset \mcalAcircAc(\ef^1_t)$ as in the proof of Proposition \ref{thm:conv_lipsch}. It is sufficient to show that
\begin{equation}\label{eqn:lipch_conv_liminf}
\limsup_{n \to \infty} N(\xi^n, \zeta) \le N(\xi, \zeta).
\end{equation}

From the proof of Proposition \ref{thm:conv_lipsch} we have that
\begin{equation}\label{eqn:M_ij_conv_gen}
\begin{aligned}
&\lim_{n\to\infty}\ee\bigg[\int_{[0, T)}\tl f_{t}(1-\zeta_{t})\ud\xi^{n}_{t}+ \int_{[0, T)} \tl g_t(1-\xi^{n}_{t})\ud\zeta_t + h_T\Delta\xi^{n}_T\Delta\zeta_T\bigg]\\
&= \ee\bigg[\int_{[0, T)}\tl f_{t}(1-\zeta_{t})\ud\xi_{t} + \int_{[0, T)} \tl g_t(1-\xi_{t-})\ud\zeta_t + h_T\Delta\xi_T\Delta\zeta_T\bigg].
\end{aligned}
\end{equation}
For $t \in [0, T]$, define
\[
R^n_t = \int_{[0, t]} (1-\zeta_{s-})\ud\xi^n_s, \qquad R_t = \int_{[0, t]} (1-\zeta_{s})\ud\xi_s
\]
with $R^n_{0-} = R_{0-} = 0$. Corollary \ref{cor:lim_R_in-t-} implies that for $\prob$\ae $\omega \in \Omega$
\begin{align}\label{eq:convR}
\lim_{n \to \infty} R^n_{t-}(\omega) = R_{t-}(\omega) \quad \text{for all $t \in [0, T]$}.
\end{align}
By the decomposition of $(\hat f_t)$ in \eqref{eqn:decomposition_piecewise} and the dominated convergence theorem for the infinite sum (recalling \eqref{eq:Xfg}) we obtain
\begin{align*}
\ee\bigg[\int_{[0, T)}\hat f_{t}(1-\zeta_{t})\ud\xi^{n}_{t}\bigg] 
&= 
\ee\bigg[\int_{[0, T)}\hat f_{t}(1-\zeta_{t-})\ud\xi^{n}_{t}\bigg]
=
\sum_{k=1}^\infty \ee\bigg[(-1)^k \int_{[0, T)} X^f_k \ind{\{t \ge \eta^f_k\}} \ud R^n_t \bigg]\\
&=
\sum_{k=1}^\infty \ee \big[ (-1)^k X^f_k (R^n_{T-} - R^n_{\eta^f_k-})\big],
\end{align*}
where the first equality follows from the continuity of $(\xi^n_t)$ on $[0, T)$.
We further apply dominated convergence (with respect to the product of the counting measure on $\mathbb N$ and to the measure $\prob$) to obtain
\begin{equation}\label{eqn:part2}
\begin{aligned}
\lim_{n\to\infty}\ee\bigg[\!\int_{[0, T)}\!\hat f_{t}(1-\zeta_{t})\ud\xi^{n}_{t}\bigg] 
&=\sum_{k=1}^\infty  \ee \big[(-1)^k \lim_{n\to\infty}  X^f_k (R^n_{T-} - R^n_{\eta^f_k-})\big]\\
&=
\sum_{k=1}^\infty \ee \big[ (-1)^k X^f_k (R_{T-} - R_{\eta^f_k-})\big]=
\ee\bigg[\int_{[0, T)}\hat f_{t}(1-\zeta_{t})\ud\xi_{t}\bigg],
\end{aligned}
\end{equation}
where the second equality uses \eqref{eq:convR} and the final one the decomposition of $\hat f$. 
Recalling that $\xi^n_t\to\xi_{t-}$ as $n\to\infty$ by construction, dominated convergence gives
\begin{equation}\label{eqn:part3}
\lim_{n\to\infty}\ee\bigg[\int_{[0, T)}\hat g_{t}(1-\xi^n_{t})\ud\zeta_{t}\bigg]
=
\ee\bigg[\int_{[0, T)}\hat g_{t}(1-\xi_{t-})\ud\zeta_{t}\bigg].
\end{equation}
Putting together \eqref{eqn:M_ij_conv_gen}, \eqref{eqn:part2} and \eqref{eqn:part3} shows
\[
\lim_{n \to \infty} N(\xi^n, \zeta) = \ee\bigg[\int_{[0, T)} f_{t}(1-\zeta_{t})\ud\xi_{t} + \int_{[0, T)} g_t(1-\xi_{t-})\ud\zeta_t + h_T\Delta\xi_T\Delta\zeta_T\bigg].
\]
It remains to notice that by \eqref{eq-remove-common-jumps} the right hand side is dominated by $N(\xi, \zeta)$, which completes the proof of \eqref{eqn:lipch_conv_liminf}.
\end{proof}

\subsection{Proof of Theorem \ref{thm:ef_0_value}} \label{sec:ef_functional}
Randomisation devices $Z_\tau$ and $Z_\sigma$ associated to a pair $(\tau,\sigma)\in\te^R(\ef^1_t)\times\te^R(\ef^2_t)$ are independent of $\mcalG$. Denoting by $(\xi_t) \in \mcalAcirc(\ef^1_t)$ and $(\zeta_t) \in \mcalAcirc(\ef^2_t)$ the generating processes for $\tau$ and $\sigma$, respectively, the statement of Proposition \ref{prop-functionals-equal} can be extended to encompass the conditional functional \eqref{eqn:cond_func}:
\begin{equation}\label{eqn:cond_reform}
\ee\big[ \mcal{P}(\tau, \sigma) \big| \mcalG \big] = \ee\bigg[\int_{[0, T)} f_{t}(1-\zeta_{t})\ud \xi_t +  \int_{[0, T)} g_{t}(1-\xi_t) \ud \zeta_t + \sum_{t \in [0, T]} h_t \Delta \xi_t \Delta \zeta_t \bigg|\mcalG\bigg].
\end{equation}
We can also repeat the same argument as in Remark \ref{rem-Laraki-Solan} to obtain that
\[
\underline V:=\esssup_{\sigma\in\te^R(\ef^2_t)}\essinf_{\tau \in \te^R(\ef^1_t)}  \ee\big[ \mcal{P}(\tau, \sigma) \big| \mcalG \big]=\esssup_{\sigma\in\te^R(\ef^2_t)}\essinf_{\tau \in \te(\ef^1_t)}  \ee\big[ \mcal{P}(\tau, \sigma) \big| \mcalG \big]
\]
and 
\[
\overline V:=\essinf_{\tau \in \te^R(\ef^1_t)} \esssup_{\sigma\in\te^R(\ef^2_t)} \ee\big[ \mcal{P}(\tau, \sigma) \big| \mcalG \big]=\essinf_{\tau \in \te^R(\ef^1_t)}\esssup_{\sigma\in\te(\ef^2_t)}  \ee\big[ \mcal{P}(\tau, \sigma) \big| \mcalG \big].
\]
Notice that $\overline V\ge \underline V$, $\prob$\as We will show that
\begin{align}\label{eq:EV}
\ee[\,\underline V\,]=\ee[\,\overline V\,],
\end{align}
so that $\overline V= \underline V$\,, $\prob$\as as needed.

In order to prove \eqref{eq:EV}, let us define
\begin{equation*}
\overline{M}(\tau):=\esssup_{\sigma \in \te(\ef^2_t)}  \ee\big[ \mcal{P}(\tau, \sigma) \big| \mcalG \big],\quad\text{for $\tau\in\te^R(\ef^1_t)$},
\end{equation*}
and 
\begin{align*}
\underline{M}(\sigma):=\essinf_{\tau \in \te(\ef^1_t)}  \ee\big[ \mcal{P}(\tau, \sigma) \big| \mcalG \big],\quad\text{for $\sigma\in\te^R(\ef^2_t)$}.
\end{align*}
These are two standard optimal stopping problems and the theory of Snell envelope applies (see, e.g., \cite[Appendix D]{Karatzas1998} and \cite{elkaroui1981}). We adapt some results from that theory to suit our needs in the game setting.
\begin{Lemma}\label{lem:directed}
The family $\{\overline{M}(\tau),\,\tau\in \te^R(\ef^1_t)\}$ is downward directed and the family $\{\underline{M}(\sigma),\,\sigma\in \te^R(\ef^2_t)\}$ is upward directed. 
\end{Lemma}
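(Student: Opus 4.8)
The plan is to use the classical measurable pasting (``switching'') argument for families of conditional optimal values, adapted to the $\mcalG$-conditioned setting. I will give the construction for $\{\overline M(\tau):\tau\in\te^R(\ef^1_t)\}$ and obtain downward-directedness; the upward-directedness of $\{\underline M(\sigma):\sigma\in\te^R(\ef^2_t)\}$ then follows by the entirely symmetric argument (or via the sign-flip of Remark \ref{rem:ineq}). Recall that downward-directedness means: for any $\tau_1,\tau_2\in\te^R(\ef^1_t)$ there is $\tau_3\in\te^R(\ef^1_t)$ with $\overline M(\tau_3)\le \overline M(\tau_1)\wedge\overline M(\tau_2)$, $\prob$\as

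First I would record the measurability of the value. For fixed $\tau\in\te^R(\ef^1_t)$ and $\sigma\in\te(\ef^2_t)$ the random variable $\ee[\mcal{P}(\tau,\sigma)\mid\mcalG]$ is $\mcalG$-measurable, so the essential supremum $\overline M(\tau)$ admits a $\mcalG$-measurable version. Consequently, given $\tau_1,\tau_2$, the event
\[
A:=\{\overline M(\tau_1)\le \overline M(\tau_2)\}
\]
belongs to $\mcalG$, and since by hypothesis $\mcalG\subseteq\ef^1_0$, it is in particular $\ef^1_0$-measurable. Next I paste the two strategies along $A$: denoting by $\xi^1,\xi^2\in\mcalAcirc(\ef^1_t)$ the generating processes of $\tau_1,\tau_2$, I set $\xi^3_t:=\ind{A}\xi^1_t+\ind{A^c}\xi^2_t$. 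Because $A\in\ef^1_0\subseteq\ef^1_t$, the process $\xi^3$ is $(\ef^1_t)$-adapted and clearly inherits the \cadlag, non-decreasing properties with $\xi^3_{0-}=0$ and $\xi^3_T=1$, so $\xi^3\in\mcalAcirc(\ef^1_t)$; using the common randomisation device $Z_\tau$, it generates $\tau_3:=\inf\{t\in[0,T]:\xi^3_t>Z_\tau\}\in\te^R(\ef^1_t)$. The key pathwise observation is that $\tau_3=\tau_1$ on $A$ and $\tau_3=\tau_2$ on $A^c$, whence for every $\sigma$,
\[
\mcal{P}(\tau_3,\sigma)=\ind{A}\,\mcal{P}(\tau_1,\sigma)+\ind{A^c}\,\mcal{P}(\tau_2,\sigma).
\]

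Finally, since $A\in\mcalG$, the indicators factor out of the conditional expectation, so for each $\sigma\in\te(\ef^2_t)$
\[
\ee[\mcal{P}(\tau_3,\sigma)\mid\mcalG]=\ind{A}\,\ee[\mcal{P}(\tau_1,\sigma)\mid\mcalG]+\ind{A^c}\,\ee[\mcal{P}(\tau_2,\sigma)\mid\mcalG]\le \ind{A}\,\overline M(\tau_1)+\ind{A^c}\,\overline M(\tau_2).
\]
Taking the essential supremum over $\sigma$ and using that, by the very definition of $A$, the right-hand side equals $\overline M(\tau_1)\wedge\overline M(\tau_2)$, I obtain $\overline M(\tau_3)\le \overline M(\tau_1)\wedge\overline M(\tau_2)$ $\prob$\as, which is the asserted downward-directedness. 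For the second family I run the symmetric construction with $B:=\{\underline M(\sigma_1)\ge\underline M(\sigma_2)\}\in\mcalG\subseteq\ef^2_0$ and pasted generating process $\zeta^3:=\ind{B}\zeta^1+\ind{B^c}\zeta^2$, reaching $\underline M(\sigma_3)\ge\underline M(\sigma_1)\vee\underline M(\sigma_2)$.

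The only genuine subtlety, rather than a real obstacle, is verifying that the pasted object is a legitimate element of $\te^R(\ef^1_t)$ (resp.\ $\te^R(\ef^2_t)$): this is exactly where the standing hypothesis $\mcalG\subseteq\ef^1_0\cap\ef^2_0$ is used, guaranteeing that the switching set $A$ (resp.\ $B$) is known to the relevant player already at time $0$, so that $\xi^3$ (resp.\ $\zeta^3$) remains adapted. Everything else reduces to the $\mcalG$-measurability of $\overline M(\tau)$ and $\underline M(\sigma)$, the factorisation of conditional expectations over $\mcalG$-measurable sets, and the monotonicity of the essential supremum.
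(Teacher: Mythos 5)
Your proof is correct and follows essentially the same route as the paper's: both paste the two strategies along the $\mcalG$-measurable event where one conditional value dominates the other, use $\mcalG\subseteq\ef^1_0$ (resp.\ $\ef^2_0$) to keep the pasted generating process adapted, factor the indicators out of the conditional expectation, and conclude by taking the essential supremum over the opponent's stopping times. The only cosmetic difference is that the paper defines the pasted stopping time $\hat\tau=\tau^{(1)}\ind{B}+\tau^{(2)}\ind{B^c}$ directly and computes via the integral reformulation \eqref{eqn:cond_reform}, whereas you paste the generating processes first and argue pathwise; these are the same construction.
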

\begin{proof}
Let $\tau^{(1)},\tau^{(2)}\in\te^R(\ef^1_t)$ and let $\xi^{(1)},\xi^{(2)}\in\mcalAcirc(\ef^1_t)$ be the corresponding generating processes. Fix the $\mcalG$-measurable event $B=\{\overline{M}(\tau^{(1)})\les\overline{M}(\tau^{(2)})\}$ and define another $(\ef^1_t)$-randomised stopping time as $\hat{\tau}=\tau^{(1)} \ind{B} +\tau^{(2)} \ind{B^c}$. We use $\mcalG\subset\ef^1_0$ to ensure that $\hat \tau\in\te^R(\ef^1_t)$. The generating process of $\hat \tau$ reads $\hat \xi_t=\xi^{(1)}_t \ind{B} +\xi^{(2)}_t \ind{B^c}$ for $t\in[0,T]$. Using the linear structure of $\hat \xi$ and recalling \eqref{eqn:cond_reform}, for any $\sigma\in\te(\ef^2_t)$, we have
\begin{align*}
\ee\big[\mcal{P}(\hat{\tau}, \sigma)|\mcalG\big]
&=
\ind{B}\ee\bigg[\int_{[0, \sigma)} f_{u}\ud \xi^{(1)}_u +  g_{\sigma}(1-\xi^{(1)}_\sigma) + h_\sigma \Delta \xi^{(1)}_\sigma \bigg|\mcalG\bigg]\\
&\hspace{12pt}+\ind{B^c}\ee\bigg[\int_{[0, \sigma)} f_{u}\ud \xi^{(2)}_u +  g_{\sigma}(1-\xi^{(2)}_\sigma) + h_\sigma \Delta \xi^{(2)}_\sigma\bigg|\mcalG\bigg]\\
&=\ind{B}\ee\big[\mcal{P}(\tau^{(1)} ,\sigma)|\mcalG\big]+\ind{B^c}\ee\big[\mcal{P}(\tau^{(2)} ,\sigma)|\mcalG\big]\\
&\le\ind{B}\overline{M}(\tau^{(1)})+\ind{B^c}\overline{M}(\tau^{(2)})=\overline{M}(\tau^{(1)})\wedge\overline{M}(\tau^{(2)}),
\end{align*} 
where the inequality is by definition of essential supremum and the final equality by definition of the event $B$.
Thus, taking the supremum over $\sigma\in\te(\ef^2_t)$ we get
\[
\overline{M}(\hat \tau)\le \overline{M}(\tau^{(1)})\wedge\overline{M}(\tau^{(2)}),
\]
hence the family $\{\overline{M}(\tau),\,\tau\in \te^R(\ef^1_t)\}$ is downward directed. A symmetric argument proves that 
the family $\{\underline{M}(\sigma),\,\sigma\in \te^R(\ef^2_t)\}$ is upward directed.
\end{proof}
An immediate consequence of the lemma  and of the definition of essential supremum/infimum is that (see, e.g., \cite[Lemma I.1.3]{Peskir2006}) we can find sequences $(\sigma_n)_{n\ge 1}\subset \te^R(\ef^2_t)$ and $(\tau_n)_{n\ge 1}\subset\te^R(\ef^1_t)$ such that $\prob$\as
\begin{align}\label{eq:limV}
\overline V=\lim_{n\to\infty}\overline{M}(\tau_n)\quad\text{and}\quad\underline V=\lim_{n\to\infty}\underline{M}(\sigma_n),
\end{align}
where the convergence is monotone in both cases. 

Analogous results hold for the optimisation problems defining $\overline M(\tau)$ and $\underline M(\sigma)$. The proof of the following lemma is similar to that of Lemma \ref{lem:directed} and omitted.
\begin{Lemma}\label{lem:directed2}
The family $\{\ee\big[\mcal{P}(\tau,\sigma)|\mcalG\big],\,\sigma\in \te(\ef^2_t)\}$ is upward directed for each $\tau\in \te^R(\ef^1_t)$. The family $\{\ee\big[\mcal{P}(\tau,\sigma)|\mcalG\big],\,\tau\in \te(\ef^1_t)\}$ is downward directed for each $\sigma\in \te^R(\ef^2_t)$. 
\end{Lemma}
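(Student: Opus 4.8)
The plan is to mirror the construction from the proof of Lemma \ref{lem:directed}, exploiting the affine dependence of the reformulated payoff \eqref{eqn:cond_reform} on each player's generating process together with the fact that $\mcalG$-measurable indicators factor out of $\ee[\,\cdot\,|\mcalG]$. I will establish that $\{\ee[\mcal{P}(\tau,\sigma)|\mcalG],\,\sigma\in\te(\ef^2_t)\}$ is upward directed for each fixed $\tau\in\te^R(\ef^1_t)$; the downward-directedness of the second family will then follow by a symmetric argument. Note that here $\sigma^{(1)},\sigma^{(2)}$ range over \emph{pure} $(\ef^2_t)$-stopping times, so their generating processes are the indicators $\zeta^{(i)}_t=\indd{t\ges\sigma^{(i)}}\in\mcalAcirc(\ef^2_t)$.

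First I would fix $\tau\in\te^R(\ef^1_t)$ with generating process $\xi\in\mcalAcirc(\ef^1_t)$, take $\sigma^{(1)},\sigma^{(2)}\in\te(\ef^2_t)$, and define the $\mcalG$-measurable event
\[
B=\big\{\ee[\mcal{P}(\tau,\sigma^{(1)})|\mcalG]\ges \ee[\mcal{P}(\tau,\sigma^{(2)})|\mcalG]\big\}\in\mcalG.
\]
Setting $\hat\sigma=\sigma^{(1)}\ind{B}+\sigma^{(2)}\ind{B^c}$ and using $\mcalG\subseteq\ef^2_0\subseteq\ef^2_t$, the identity $\{\hat\sigma\les t\}=(\{\sigma^{(1)}\les t\}\cap B)\cup(\{\sigma^{(2)}\les t\}\cap B^c)$ shows $\hat\sigma\in\te(\ef^2_t)$, so $\hat\sigma$ is an admissible pure strategy for the maximiser. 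On $B$ we have $\hat\sigma=\sigma^{(1)}$ and on $B^c$ we have $\hat\sigma=\sigma^{(2)}$, so its generating process is $\hat\zeta_t=\indd{t\ges\hat\sigma}=\ind{B}\zeta^{(1)}_t+\ind{B^c}\zeta^{(2)}_t$.

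The key computation is then to substitute $\hat\zeta$ into \eqref{eqn:cond_reform}. Because $\ind{B}+\ind{B^c}=1$, we have $1-\hat\zeta_t=\ind{B}(1-\zeta^{(1)}_t)+\ind{B^c}(1-\zeta^{(2)}_t)$, $\ud\hat\zeta_t=\ind{B}\ud\zeta^{(1)}_t+\ind{B^c}\ud\zeta^{(2)}_t$ and $\Delta\hat\zeta_t=\ind{B}\Delta\zeta^{(1)}_t+\ind{B^c}\Delta\zeta^{(2)}_t$, so each of the three terms in the integrand of \eqref{eqn:cond_reform} splits as $\ind{B}(\,\cdot\,^{(1)})+\ind{B^c}(\,\cdot\,^{(2)})$. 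Pulling the $\mcalG$-measurable indicators out of the conditional expectation gives
\[
\ee[\mcal{P}(\tau,\hat\sigma)|\mcalG]=\ind{B}\,\ee[\mcal{P}(\tau,\sigma^{(1)})|\mcalG]+\ind{B^c}\,\ee[\mcal{P}(\tau,\sigma^{(2)})|\mcalG]=\ee[\mcal{P}(\tau,\sigma^{(1)})|\mcalG]\vee\ee[\mcal{P}(\tau,\sigma^{(2)})|\mcalG],
\]
the last equality being the definition of $B$. This proves upward-directedness. For the second family I would fix $\sigma\in\te^R(\ef^2_t)$, reverse the inequality defining $B$, and use $\mcalG\subseteq\ef^1_0$ to keep $\hat\tau=\tau^{(1)}\ind{B}+\tau^{(2)}\ind{B^c}$ a pure $(\ef^1_t)$-stopping time, appealing to the affine dependence of \eqref{eqn:cond_reform} on $\xi$ to reach the minimum.

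I do not expect a genuine obstacle here; the argument is essentially the one already carried out for Lemma \ref{lem:directed}. The only points requiring care are the bookkeeping ones: verifying that $\hat\sigma$ (resp.\ $\hat\tau$) is a legitimate \emph{pure} stopping time for the correct filtration, which rests entirely on $B\in\mcalG\subseteq\ef^i_0$, and checking that the affine structure of the payoff together with $\ind{B}+\ind{B^c}=1$ makes the ``constant'' factors $1-\hat\zeta_t$ split correctly. Neither step uses anything beyond the linearity already exploited in Lemma \ref{lem:directed}.
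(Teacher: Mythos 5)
Your proposal is correct and is essentially the paper's own argument: the paper omits the proof of Lemma \ref{lem:directed2}, stating only that it is similar to that of Lemma \ref{lem:directed}, and your adaptation (defining $B\in\mcalG$, forming $\hat\sigma=\sigma^{(1)}\ind{B}+\sigma^{(2)}\ind{B^c}$, and exploiting the linearity of \eqref{eqn:cond_reform} in the generating process) is precisely that adaptation. You also correctly handle the one point needing extra care relative to Lemma \ref{lem:directed}, namely that $\hat\sigma$ remains a \emph{pure} $(\ef^2_t)$-stopping time because $B\in\mcalG\subseteq\ef^2_0$.
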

It follows that for each $\tau\in\te^R(\ef^1_t)$ and $\sigma\in\te^R(\ef^2_t)$, there are sequences $(\sigma^\tau_n)_{n\ge 1}\subset\te(\ef^2_t)$ and $(\tau^\sigma_n)_{n\ge 1}\subset\te(\ef^1_t)$ such that 
\begin{align}\label{eq:limM}
\overline M(\tau)=\lim_{n\to\infty}\ee\big[\mcal{P}(\tau,\sigma^\tau_n)|\mcalG\big]\quad\text{and}\quad\underline M(\sigma)=\lim_{n\to\infty}\ee\big[\mcal{P}(\tau^\sigma_n,\sigma)|\mcalG\big],
\end{align}
where the convergence is monotone in both cases. Equipped with these results we can prove the following lemma which will quickly lead to \eqref{eq:EV}.

\begin{Lemma}\label{cor-exp-for-values}
Recall $V_*$ and $V^*$ as in Definition \ref{def-value-rand-strat}. We have
\begin{equation}\label{eq-cor-exp}
\ee[\overline{V}] = V^*, \qquad \text{and}\qquad \ee[\underline{V}] = V_*.
\end{equation}
\end{Lemma}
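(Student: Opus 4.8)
The plan is to prove the first identity $\ee[\overline{V}]=V^*$ in full and then obtain $\ee[\underline{V}]=V_*$ by the symmetry of Remark \ref{rem:ineq}. Throughout, integrability is supplied by \ref{eq-integrability-cond} and \ref{eq-order-cond}: since the indicators in \eqref{eqn:payoff} are mutually exclusive and $g_t\le h_t\le f_t$, we have $|\mcal{P}(\tau,\sigma)|\le\sup_{t\in[0,T]}(|f_t|+|g_t|)\in L^1(\prob)$. Hence every conditional expectation appearing below is well defined and uniformly dominated by the integrable random variable $\ee[\sup_{t}(|f_t|+|g_t|)\,|\,\mcalG]$, which legitimises all the exchanges of limits and expectations via dominated convergence together with the tower property.

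First I would fix $\tau\in\te^R(\ef^1_t)$ and evaluate $\ee[\overline{M}(\tau)]$. By definition of the essential supremum, $\ee[\mcal{P}(\tau,\sigma)\,|\,\mcalG]\le\overline{M}(\tau)$ for every $\sigma\in\te(\ef^2_t)$, so taking expectations gives $N(\tau,\sigma)\le\ee[\overline{M}(\tau)]$ and therefore $\sup_{\sigma\in\te(\ef^2_t)}N(\tau,\sigma)\le\ee[\overline{M}(\tau)]$. For the reverse inequality I invoke the upward-directedness of Lemma \ref{lem:directed2} and the monotone maximising sequence $(\sigma^\tau_n)\subset\te(\ef^2_t)$ from \eqref{eq:limM}, along which dominated convergence yields $\ee[\overline{M}(\tau)]=\lim_n\ee[\ee[\mcal{P}(\tau,\sigma^\tau_n)\,|\,\mcalG]]=\lim_n N(\tau,\sigma^\tau_n)\le\sup_{\sigma\in\te(\ef^2_t)}N(\tau,\sigma)$. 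Combining the two inequalities gives $\ee[\overline{M}(\tau)]=\sup_{\sigma\in\te(\ef^2_t)}N(\tau,\sigma)$, and Remark \ref{rem-Laraki-Solan} then lets me replace pure by randomised stopping times in this inner supremum, so that $\ee[\overline{M}(\tau)]=\sup_{\sigma\in\te^R(\ef^2_t)}N(\tau,\sigma)$.

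Next I would integrate out the outer $\tau$-optimisation. Since $\overline{V}=\essinf_{\tau\in\te^R(\ef^1_t)}\overline{M}(\tau)\le\overline{M}(\tau)$ for every $\tau$, taking expectations gives $\ee[\overline{V}]\le\inf_{\tau\in\te^R(\ef^1_t)}\ee[\overline{M}(\tau)]$. For the opposite inequality I use the downward-directedness of Lemma \ref{lem:directed} and the monotone minimising sequence $(\tau_n)$ of \eqref{eq:limV}, for which $\overline{M}(\tau_n)\downarrow\overline{V}$ $\prob$\as; dominated convergence then gives $\ee[\overline{V}]=\lim_n\ee[\overline{M}(\tau_n)]\ge\inf_{\tau\in\te^R(\ef^1_t)}\ee[\overline{M}(\tau)]$. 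Hence $\ee[\overline{V}]=\inf_{\tau\in\te^R(\ef^1_t)}\ee[\overline{M}(\tau)]$, and substituting the expression from the previous step yields
\[
\ee[\overline{V}]=\inf_{\tau\in\te^R(\ef^1_t)}\sup_{\sigma\in\te^R(\ef^2_t)}N(\tau,\sigma)=V^*.
\]
The identity $\ee[\underline{V}]=V_*$ follows by an entirely symmetric argument, now exploiting that $\{\underline{M}(\sigma)\}$ is upward directed and that $\{\ee[\mcal{P}(\tau,\sigma)\,|\,\mcalG]:\tau\in\te(\ef^1_t)\}$ is downward directed, with the inner infimum over pure stopping times coinciding with that over randomised ones by Remark \ref{rem-Laraki-Solan}; alternatively, one applies the already-established identity to the payoff $\mcal{P}'=-\mcal{P}$ as in \eqref{eq:swap}.

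I expect the only delicate point to be the bookkeeping of the two nested layers of essential extrema: each layer must be converted into an ordinary countable monotone limit through the appropriate directedness lemma before expectation and limit can be interchanged, and one must be careful that the pure-to-randomised reduction of Remark \ref{rem-Laraki-Solan} is applied only to the \emph{inner} optimisation (the outer one is genuinely over randomised strategies). Once the uniform integrable domination is in place, all the remaining convergences are routine consequences of dominated convergence and the tower property.
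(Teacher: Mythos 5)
Your proposal is correct and takes essentially the same route as the paper's proof: both first establish $\ee[\overline{M}(\tau)]=\sup_{\sigma\in\te(\ef^2_t)}N(\tau,\sigma)$ using the definition of essential supremum together with the directed monotone sequences of \eqref{eq:limM}, then prove $\ee[\overline{V}]=\inf_{\tau\in\te^R(\ef^1_t)}\ee[\overline{M}(\tau)]$ via \eqref{eq:limV}, and combine the two identities. The only (cosmetic) differences are that you justify the interchange of limit and expectation by dominated convergence with an explicit integrable bound where the paper invokes monotone convergence, and that you make explicit the appeal to Remark \ref{rem-Laraki-Solan} (to pass from pure to randomised stopping times in the inner supremum) which the paper leaves implicit in its final step.
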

\begin{proof}
Fix $\tau\in\te^R(\ef^1_t)$. By \eqref{eq:limM} and the monotone convergence theorem
\[
\ee[ \overline{M}(\tau) ] = \lim_{n\to\infty}\ee[\mcal{P}(\tau,\sigma^\tau_n)]\le \sup_{\sigma\in\te(\ef^2_t)}\ee[\mcal{P}(\tau,\sigma)].
\]
The opposite inequality follows from the fact that $\overline{M}(\tau) \ge \ee[\mcal{P}(\tau,\sigma)|\mcalG]$ for any $\sigma \in \te(\ef^2_t)$ by the definition of the essential supremum. Therefore, we have
\begin{equation}\label{eqn:M1}
\ee[ \overline{M}(\tau) ] = \sup_{\sigma\in\te(\ef^2_t)}\ee[\mcal{P}(\tau,\sigma)]. 
\end{equation}
From \eqref{eq:limV}, similar arguments as above prove that
\begin{equation}\label{eqn:M2}
\ee[\overline{V}] = \inf_{\tau \in \te^R(\ef^1_t)} \ee[ \overline{M}(\tau) ].
\end{equation}
Combining \eqref{eqn:M1} and \eqref{eqn:M2} completes the proof that $\ee[\overline{V}] = V^*$. The second part of the statement requires analogous arguments.
\end{proof}

Finally, \eqref{eq-cor-exp} and Theorem \ref{thm:main2} imply \eqref{eq:EV}, which concludes the proof of Theorem \ref{thm:ef_0_value}.

%
%

\section{Counterexamples}
\label{sec:Nikita-examples}

In the three subsections below we show that: (a) relaxing condition \ref{eq-order-cond} may lead to a game without a value, (b) in situations where one player has all the informational advantage, the use of randomised stopping times may still be beneficial also for the uninformed player, and (c) Assumption \ref{ass:regular_gen} is tight in requiring that either $(\hat f_t)$ is non-increasing or $(\hat g_t)$ is non-decreasing. 

In order to keep the exposition simple we consider the framework of Section \ref{subsec:game_1} with $\I = 2$, $\J = 1$, and impose that $(\ef^p_t)$ be the trivial filtration (hence all payoff processes are deterministic, since they are $(\ef^p_t)$-adapted). Furthermore we restrict our attention to the case in which $f^{1,1} = f^{2,1} = f$, $g^{1,1} = g^{2,1} = g$ and $h^{1,1}_t\ind{\{t<T\}}=h^{2,1}_t\ind{\{t<T\}}=f_t\ind{\{t<T\}}$. Only the terminal payoff depends on the scenario, i.e., $h^{1,1}_T\neq h^{2,1}_T$ (both deterministic). For notational simplicity we set $h^{1}:=h^{1,1}_T$ and $h^{2}:=h^{2,1}_T$.

Notice that only the first player (minimiser) observes the true value of $\mcalI$, so she has a strict informational advantage over the second player (maximiser).
The second player will be referred to as the \emph{uninformed player} while the first player as the \emph{informed player}.

We denote by $\te^R$ the set of $(\ef^p_t)$-randomised stopping times. The informed player chooses two randomised stopping times $\tau_1, \tau_2$ (one for each scenario, recall Lemma \ref{lem:tau_decomposition}) with the generating processes $\xi^1,\xi^2$ which, due to the triviality of the filtration $(\ef_t^p)$, are deterministic functions. Pure stopping times are constants in $[0,T]$. Similarly, the uninformed player's randomised stopping time $\sigma$ has the generating process $\zeta$ that is a deterministic function.

\subsection{A game without a value when \ref{eq-order-cond} fails}
Let us consider specific payoff functions 
\begin{equation*}
f \equiv 1,\quad g_t=\frac{1}{2}t, \quad h^1= 2,\quad h^2=0,
\end{equation*}
and let us also set $T=1$, $\pi_1 = \pi_2 =\frac{1}{2}$.

\begin{Proposition}
In the example of this subsection we have
\begin{equation*}
V_{*}\les\frac{1}{2} \qquad \text{and}\qquad V^* > \frac12,
\end{equation*}
so the game does not have a value.
\end{Proposition}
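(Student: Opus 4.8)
The plan is to exploit the scenario decomposition of Section \ref{subsec:game_1} together with the control reformulation \eqref{eq-functional-in-terms-of-controls}. Since $(\ef^p_t)$ is trivial and $\J=1$, the generating processes $\xi^1,\xi^2$ (minimiser, one per scenario) and $\zeta$ (maximiser) are deterministic nondecreasing functions on $[0,T]$ equal to $1$ at $T$, and the expected payoff splits as $N=\tfrac12 N_1+\tfrac12 N_2$, where $N_i$ is the single-scenario functional built from $f\equiv1$, $g_t=t/2$ and the terminal value $h^i$. The key observation is that both inequalities follow from one-sided estimates obtained by restricting the opponent to convenient strategies: to bound $V_*$ from above I only need, for each $\zeta$, one good minimiser response, while to bound $V^*$ from below I only need, for each $(\xi^1,\xi^2)$, a couple of good (pure) maximiser responses. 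No appeal to a minimax theorem is required, since pure strategies are admissible and thus supply the favourable one-sided bounds.

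For the upper value I would let the maximiser play a pure time $\sigma=s$, i.e.\ $\zeta=\ind{\{\cdot\ge s\}}$, and compute from \eqref{eq-functional-in-terms-of-controls} that for $s\in[0,T)$ the scenario payoffs coincide, $N_i(s)=\xi^i_s(1-\tfrac s2)+\tfrac s2$, because $h^i_s=f_s=1$ off the terminal time, while at $s=T$ the scenario value enters, $N_i(T)=\xi^i_{T-}+h^i(1-\xi^i_{T-})$. Writing $a=\xi^1_{T-}$, $b=\xi^2_{T-}$ and $\bar\xi=\tfrac12(\xi^1+\xi^2)$, this gives $N(s)=(1-\tfrac s2)\bar\xi_s+\tfrac s2$ for $s<T$ and $N(T)=1-\tfrac a2+\tfrac b2$. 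Testing the maximiser against $s=T$ and against $s\uparrow T$ (using $\bar\xi_s\to\tfrac{a+b}2$) yields
\[
\sup_{\sigma}N(\xi,\sigma)\ge\max\Big(\tfrac{a+b}{4}+\tfrac12,\;1-\tfrac a2+\tfrac b2\Big).
\]
Minimising the right-hand side over $a,b\in[0,1]$ (the minimum is attained at $b=0$, $a=\tfrac23$ and equals $\tfrac23$) shows $V^*\ge\tfrac23>\tfrac12$ for every admissible $(\xi^1,\xi^2)$.

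For the lower value I would fix an arbitrary maximiser strategy $\zeta\in\laa(\ef^2_t)$, set $p=\Delta\zeta_T$ and $G=\int_{[0,T)}\tfrac t2\,d\zeta_t\le\tfrac12(1-p)$, and exhibit an explicit minimiser response among pure times: stop at $T$ in scenario $2$, giving $M_2(T)=G$, and stop at $t_1\uparrow T$ in scenario $1$, giving $M_1(t_1)=(1-\zeta_{t_1-})+\int_{[0,t_1)}\tfrac t2\,d\zeta_t\to p+G$. Since the minimiser is free to choose $\xi^1$ and $\xi^2$ independently, combining these two responses gives $\inf_\xi N\le\tfrac12(p+G)+\tfrac12G=G+\tfrac p2\le\tfrac12$, and taking the supremum over $\zeta$ gives $V_*\le\tfrac12$.

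The main obstacle, and the whole point of the example, is the careful bookkeeping at the terminal time $T$, where Assumption \ref{eq-order-cond} fails: the simultaneous-stopping term $\sum_t h^i_t\Delta\xi^i_t\Delta\zeta_t$ must be split into its $t<T$ part (where $h^i_t=1$) and its scenario-dependent atom at $T$ (where $h^1=2$, $h^2=0$), and the one-sided limits $\xi^i_{T-}$, $\zeta_{T-}$, $\Delta\zeta_T$ must be tracked throughout. Concretely, the gap appears because controlling $N(T)$ forces the informed minimiser to place mass $\xi^1_{T-}$ near $1$ in scenario $1$, which the maximiser then punishes by stopping just before $T$; the limit $s\uparrow T$ (a supremum/infimum that need not be attained) is exactly what captures this tension and must be handled with the left-continuity of $\bar\xi$ and $\zeta$.
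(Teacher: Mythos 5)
Your proof is correct and follows essentially the same route as the paper's: both directions are obtained by testing the opponent's strategy against the very same pure responses (the minimiser stopping just before $T$ in scenario $1$ and at $T$ in scenario $2$; the maximiser stopping just before $T$ or exactly at $T$), with the terminal atoms $\xi^i_{T-}$, $\Delta\zeta_T$ tracked via left limits exactly as in the paper. The only difference is bookkeeping: where the paper uses crude estimates (e.g.\ $g_\sigma\le\tfrac{1}{2}$) and a $\delta$-case split on $\xi^1_{T-}$, you compute the payoffs exactly and minimise the resulting $\max$ over $(a,b)$ explicitly, which yields the sharper (though unneeded) bound $V^*\ge\tfrac{2}{3}$.
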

\begin{proof}
First we show that $V_{*}\les\frac{1}{2}$. Recall that (c.f. Remark \ref{rem-Laraki-Solan})
\begin{equation*}
V_*=\sup_{\sigma\in\te^R}\inf_{\tau_1,\tau_2\in\te^R}N((\tau_1,\tau_2),\sigma)=\sup_{\sigma\in\te^R}\inf_{\tau_1,\tau_2\in[0,1]}N((\tau_1,\tau_2),\sigma),
\end{equation*} 
so we can take $\tau_1, \tau_2 \in [0, 1]$ deterministic in the arguments below. Take any $\sigma \in \te^R$ and the corresponding generating process $(\zeta_t)$ which is, due to the triviality of the filtration $(\ef^p_t)$,
a deterministic function.  For $\tau_1\in[0,1)$, $\tau_2=1$ we obtain
\begin{align*}
N((\tau_1,\tau_2),\sigma)&= \ee\big[(\frac{1}{2}\sigma \indd{\sigma<\tau_1}+1\cdot \ind{\{\sigma\ges\tau_1\}})\indd{\mcalI = 1} + (\frac{1}{2}\sigma \indd{\sigma<1}+0 \cdot \indd{\sigma=1}) \indd{\mcalI = 2}\big]\\
&\le \frac{1}{2}(\frac{1}{2}\zeta_{\tau_1-}+(1-\zeta_{\tau_1-}))+ \frac{1}{4}\zeta_{1-}
= \frac{1}{2}-\frac{1}{4}\zeta_{\tau_1-}+\frac{1}{4}\zeta_{1-},
\end{align*}
where we used that $\sigma$ is bounded above by $1$ and that $\mcalI$ is independent of $\sigma$ with $\prob(\mcalI=1)=\prob(\mcalI=2)=\tfrac{1}{2}$.
In particular,
\begin{equation*}
\inf_{\tau_1,\tau_2 \in [0, 1]} N((\tau_1,\tau_2),\sigma) \le \lim_{\tau_1\to 1-} N((\tau_1,1),\sigma) = \frac{1}{2}.
\end{equation*}
This proves that $V_* \le \frac12$.

Now we turn our attention to demonstrating that $V^{*}>\frac{1}{2}$. Noting again that
\begin{equation*}
V^*=\inf_{\tau_1,\tau_2\in\te^R}\sup_{\sigma\in\te^R}N(\tau_1,\tau_2,\sigma)=\inf_{\tau_1,\tau_2\in\te^R}\sup_{\sigma\in[0,1]}N(\tau_1,\tau_2,\sigma),
\end{equation*} 
we can restrict our attention to constant $\sigma \in [0, 1]$. Take any $\tau_1, \tau_2 \in \te^R$ and the corresponding generating processes $(\xi^1_t), (\xi^2_t)$ which are also deterministic functions. 

Take any $\delta \in (0, 1/2)$. If $\xi^1_{1-} > \delta$, then for any $\sigma<1$ we have
\begin{align*}
N((\tau_1,\tau_2),\sigma)
&\ge 
\ee\big[\big(1 \cdot \indd{\tau_1\les\sigma}+ \frac{1}{2}\sigma \indd{\sigma<\tau_1}\big) \indd{\mcalI=1}+\frac{1}{2}\sigma \indd{\mcalI=2}\big]\\
&=
\ee\big[\big(\xi^1_\sigma + \frac{1}{2}\sigma(1-\xi^1_\sigma)\big) \ind{\{\mcalI=1\}}+\frac{1}{2}\sigma \ind{\{\mcalI=2\}}\big]\\
&=
\frac{1}{2}\xi^1_\sigma - \frac{1}{4}\sigma\xi^1_\sigma+\frac{1}{2}\sigma
=
\frac{1}{2}\xi^1_\sigma(1 - \frac{1}{2}\sigma)+\frac{1}{2}\sigma,
\end{align*}
and, in particular,
\begin{equation*}
\sup_{\sigma \in [0, 1]} N((\tau_1,\tau_2),\sigma) \ge \lim_{\sigma\to 1-} N((\tau_1,\tau_2),\sigma) \ge \frac{1}{4}\xi^1_{1-} + \frac{1}{2}\ge \frac12 + \frac14\delta>\frac12.
\end{equation*}
On the other hand, if $\xi^1_{1-} \le \delta$, taking $\sigma=1$ yields
\begin{equation*}
\sup_{\sigma \in [0, 1]} N((\tau_1,\tau_2),\sigma) \ge N((\tau_1,\tau_2),1) \ge \ee[2\cdot \ind{\{\tau_1= 1\}} \ind{\{\mcalI=1\}}] = 1-\xi^1_{1-}  \ge 1 - \delta > \frac{1}{2}.
\end{equation*}
This completes the proof that $V^* > \frac12$.
\end{proof}


\subsection{Necessity of randomization}\label{sec-example-necessity-of-rand}

Here we argue that randomisation is not only sufficient in order to find the value in Dynkin games with asymmetric information but in many cases it is also necessary. In \cite{DEG2020} there is a rare example of explicit construction of optimal strategies for a zero-sum Dynkin game with asymmetric information in a diffusive set-up (see Section \ref{subsec:game_2} above for details). The peculiarity of the solution in \cite{DEG2020} lies in the fact that the informed player uses a randomised stopping time whereas the uninformed player sticks to a pure stopping time. An interpretation of that result suggests
that the informed player uses randomisation to `gradually reveal' information about the scenario in which the game is being played, in order to induce the uninformed player to act in a certain desirable way. Since the uninformed player has `nothing to reveal' one may be tempted to draw a general conclusion that she should never use randomised stopping rules. However, Proposition \ref{prop-uninf-benefits-from-rand} below shows that such conclusion would be wrong in general and even the \emph{uninformed} player may benefit from randomisation of stopping times.

\begin{figure}[tb]
\begin{center}
\includegraphics[width=0.6\textwidth]{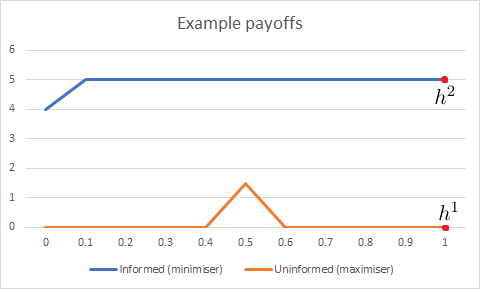}
\end{center}
\caption{Payoff functions $f$ in blue, $g$ in orange.}
\label{fig:1}
\end{figure}

We consider specific payoff functions $f$ and $g$ plotted on Figure \ref{fig:1}. Their analytic formulae read
\[
f_t = (10t+4)\ind{\{t\in[0,\frac{1}{10})\}} + 5\ind{\{t\in[\frac{1}{10},1]\}}, \qquad 
g_t = (15t - 6) \ind{\{t\in[\frac{2}{5},\frac{1}{2})\}}+(9-15 t)\ind{\{t\in[\frac{1}{2},\frac{3}{5})\}}
\]
with
\begin{equation*}
h^1 = 0 = g_{1-},\quad h^2=5=f_{1-}.
\end{equation*}
We also set $T=1$, $\pi_1 = \pi_2 =\frac{1}{2}$.
As above, we identify randomized strategies with their generating processes. In particular, we denote by $\zeta$ the generating process for $\sigma \in \te^R$. 

By Theorem \ref{thm:main},  the game has a value in randomised strategies, i.e., $V^* = V_*$. Restriction of the uninformed player's (player 2) strategies to pure stopping times affects only the lower value, see Remark \ref{rem-Laraki-Solan}. The lower value of the game in which player 2 is restricted to using pure stopping times reads
\begin{equation*}
\what{V}_*:=\sup_{\sigma\in[0,1]}\inf_{\tau_1, \tau_2 \in \te^R} N((\tau_1, \tau_2),\sigma)=\sup_{\sigma\in[0,1]}\inf_{\tau_1, \tau_2 \in [0,1]}  N((\tau_1, \tau_2),\sigma),
\end{equation*}
where the equality is again due to Remark \ref{rem-Laraki-Solan} (notice that here all pure stopping times are $(\ef^p_t)$-stopping times hence deterministic, because $(\ef^p_t)$ is trivial). As the following proposition shows, $\what{V}_*<V_*$, so the game in which the uninformed player does not randomise does not have a value. This confirms that the randomisation can play a strategic role beyond manipulating information.
\begin{Proposition}\label{prop-uninf-benefits-from-rand}
In the example of this subsection, we have
\begin{equation*}
V_*>\what{V}_*.
\end{equation*}
\end{Proposition}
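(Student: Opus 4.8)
The plan is to prove the strict inequality by bounding $\what V_*$ from above by $2$ and exhibiting one randomised strategy for the uninformed player that guarantees strictly more than $2$, which bounds $V_*$ from below. Throughout I would invoke Remark \ref{rem-Laraki-Solan} to replace the inner infimum over randomised $\tau_1,\tau_2\in\te^R$ by an infimum over deterministic $\tau_1,\tau_2\in[0,1]$, and I would use that the expected payoff decouples across the two equally likely scenarios,
\[
N((\tau_1,\tau_2),\sigma)=\tfrac12\sum_{i=1}^2\ee\big[f_{\tau_i}\ind{\{\tau_i<\sigma\}}+g_\sigma\ind{\{\sigma<\tau_i\}}+h^i_{\tau_i}\ind{\{\tau_i=\sigma\}}\big],
\]
where the coincidence payoff $h^i_s$ equals $f_s$ for $s<1$ and $h^i$ for $s=1$, so that the minimiser optimises $\tau_1$ and $\tau_2$ independently. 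For a deterministic $\tau_i=s$ and a randomised $\sigma$ with generating process $\zeta$, I would use Lemma \ref{lem-eta-xi} (and the Lebesgue--Stieltjes representation behind Lemma \ref{lem-alt-repr-both-rand}) to rewrite each scenario's payoff as $f_s(1-\zeta_{s-})+\int_{[0,s)}g_u\,\ud\zeta_u$ when $s<1$, and as $\int_{[0,1)}g_u\,\ud\zeta_u+h^i(1-\zeta_{1-})$ when $s=1$.

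For the upper bound on $\what V_*$, I would test each pure $\sigma=s_0\in[0,1]$. If $s_0<1$, the minimiser plays $\tau_1=\tau_2=1>s_0$, collecting $g_{s_0}\le 3/2$ in both scenarios, so $\inf_{\tau_1,\tau_2}N=g_{s_0}\le 3/2$. If $s_0=1$, the minimiser plays $\tau_1=1$ (paying $h^1=0$ in scenario $1$) and $\tau_2=0$ (paying $f_0=4$ in scenario $2$), giving $\inf_{\tau_1,\tau_2}N\le\tfrac12(0+4)=2$. Taking the supremum over $s_0$ yields $\what V_*\le 2$.

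For the lower bound on $V_*$, I would let the uninformed player use the generating process $\zeta$ that places mass $a$ at the peak $t=1/2$ of $g$ (where $g_{1/2}=3/2$) and mass $1-a$ at $T=1$; that is, $\zeta_t=0$ on $[0,1/2)$, $\zeta_t=a$ on $[1/2,1)$, $\zeta_1=1$. Against this $\zeta$ the scenario payoffs from the formulas above are: $f_s\ge 4$ for any stop at $s\le 1/2$; $5-\tfrac72 a$ for a stop in $(1/2,1)$; and $\tfrac32 a+h^i(1-a)$ for waiting to $T$. Hence the minimiser's optimal values are $m_1=\min\big(4,\,5-\tfrac72a,\,\tfrac32a\big)=\tfrac32a$ in scenario $1$ and $m_2=\min\big(4,\,5-\tfrac72a\big)$ in scenario $2$. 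Choosing $a=2/7$ gives $m_1=3/7$ and $m_2=4$, so $\inf_{\tau_1,\tau_2}N(\cdot,\zeta)=\tfrac12(3/7+4)=31/14>2$. Therefore $V_*\ge 31/14>2\ge\what V_*$, which proves the claim.

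The main obstacle — indeed the only genuinely delicate point — is the best-response computation against the randomised $\sigma$: I must verify that in scenario $1$ the minimiser cannot escape the extra expected cost $\tfrac32 a$ created by the atom at $1/2$ (waiting to $T$ stays strictly cheaper than any early stop because $f\ge 4$ dominates $g$), while in scenario $2$ the same atom does not push the minimiser's value below $4$ as long as $a\le 2/7$. The asymmetry of the terminal payoffs $h^1=0\neq 5=h^2$ is precisely what makes an atom at the $g$-peak profitable for the uninformed player: it extracts a payment in the scenario where the minimiser would otherwise exit for free, at negligible cost in the scenario where the minimiser is already paying $4$.
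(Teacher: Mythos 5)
Your proof is correct and takes essentially the same route as the paper: the same reduction to pure stopping times for the informed player via Remark \ref{rem-Laraki-Solan}, the same upper bound $\what{V}_*\le 2$ (answering $\sigma<1$ with $(1,1)$ and $\sigma=1$ with $(1,0)$), and the identical randomised strategy $\zeta_t=a\ind{\{t\ge 1/2\}}+(1-a)\ind{\{t=1\}}$ with $a=2/7$, giving $V_*\ge \frac{31}{14}>2$. Your explicit enumeration of the minimiser's stopping values against $\zeta$ (cost $f_s\ge 4$ for $s\le\frac12$, $5-\frac72 a$ on $(\frac12,1)$, and $\frac32 a+h^i(1-a)$ at $T$) simply spells out the paper's claim \eqref{eqn:zb} in more detail.
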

\begin{proof}
First, notice that 
\begin{equation*}
\what{V}_*\les\sup_{\sigma\in [0,1]} N(\hat{\tau}(\sigma),\sigma),
\end{equation*}
where we take
\[
\hat \tau (\sigma)= (\tau_1(\sigma), \tau_2(\sigma)) = \begin{cases}
(1,1),& \text{for } \sigma\in[0,1),\\
(1,0),& \text{for } \sigma=1.
            \end{cases}
\]
It is easy to verify that $\sup_{\sigma\in[0,1]} N(\hat{\tau}(\sigma),\sigma)=2$. 

We will show that the $\sigma$-player can ensure a strictly larger payoff by using a randomised strategy. Define $\zeta_t=a \ind{\{t\ges\frac{1}{2}\}}+(1-a)\ind{\{t=1\}}$, i.e., the corresponding $\sigma\in\te^R$ prescribes to `stop at time $\frac{1}{2}$ with probability $a$ and at time $1$ with probability $1-a$'. The value of the parameter $a \in [0,1]$ will be determined below. We claim that
\begin{equation}\label{eqn:zb}
\inf_{\tau_1, \tau_2 \in[0,1]} N((\tau_1, \tau_2),\zeta) = N((1,0),{\zeta})\wedge N((1,1),{\zeta}).
\end{equation}
Assuming that the above is true, we calculate
\[
N((1,0),{\zeta})=2+\frac{3}{4}a, \qquad N((1,1),{\zeta}) = \frac{5}{2}-a.
\]
Picking $a = \frac27$ the above quantities are equal to $\frac{31}{14}$. Hence $V_* \ge \frac{31}{14}>2$.

It remains to prove \eqref{eqn:zb}. Recall that $\zeta_t=a \ind{\{t\ges\frac{1}{2}\}}+(1-a)\ind{\{t=1\}}$ is the generating process of $\sigma$ and the expected payoff reads
\begin{equation*}
N((\tau_1, \tau_2), \zeta) = \sum_{i=1}^2 \ee \big[ \ind{\{\mcalI=i\}}\left(f_{\tau_i} \ind{\{\tau_i \le \sigma\} \cap \{\tau_i < 1\}} + g_{\sigma} \ind{\{\sigma<\tau_i\} \cap \{\sigma < 1\}} + h^i \ind{\{\tau_i = \sigma = 1\}}\right) \big].
\end{equation*}
It is clear that on the event $\{\mcalI=1\}$ the infimum is attained for $\tau_1=1$, irrespective of the choice of $\zeta$. On the event $\{\mcalI=2\}$ the informed player would only stop either at time zero, where the function $f$ attains the minimum cost $f_0=4$, or at time $t>\frac12$ since  $\zeta$ only puts mass at $t=\frac12$ and at $t=1$ (the informed player knows her opponent may stop at $t=\frac12$ with probability $a$). The latter strategy corresponds to a payoff $5-\frac72 a$ and can also be achieved by picking $\tau_2=1$. Then the informed player needs only to consider the expected payoff associated to the strategies $(\tau_1,\tau_2)=(1,0)$ and $(\tau_1,\tau_2)=(1,1)$, so that \eqref{eqn:zb} holds.
\end{proof}

\subsection{Necessity of Assumption \ref{ass:regular_gen}} \label{subsec:example_jumps}

Our final counter-example shows that violating Assumption \ref{ass:regular_gen} by allowing both predictable upward jumps of $f$ \emph{and} predictable downward jumps of $g$ may also lead to a game without a value.

Consider the payoffs
\[
f_t=1+2\ind{\{t\ge \frac{1}{2}\}},\quad g_t=-\ind{\{t\ge \frac{1}{2}\}},\quad h^1=3,\quad h^2=-1,
\]
so that $h^1=f_{1-}$ and $h^2=g_{1-}$ and let us also set $T=1$, $\pi_1=\pi_2=\tfrac{1}{2}$. Assumption \ref{ass:regular_gen} is violated as $g$ has a predictable downward jump and $f$ has a predictable upward jump at time $t=\frac{1}{2}$.

\begin{Proposition}
In the example of this subsection we have
\begin{equation*}
V_{*}\les 0,\quad\text{and}\quad V^{*}>0,
\end{equation*}
so the game does not have a value.
\end{Proposition}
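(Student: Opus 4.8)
The plan is to prove the two bounds separately, throughout exploiting that $(\ef^p_t)$ is trivial (so all generating processes are deterministic functions) and that by Remark \ref{rem-Laraki-Solan} the inner optimisations defining $V_*$ and $V^*$ may be restricted to pure stopping times, i.e.\ to constants in $[0,1]$ for $\tau_1,\tau_2$ and for $\sigma$. Writing $N((\tau_1,\tau_2),\sigma)=\tfrac12 N^1(\xi^1,\zeta)+\tfrac12 N^2(\xi^2,\zeta)$, where $N^i$ is the per-scenario payoff read off from \eqref{eq-functional-in-terms-of-controls} with terminal value $h^i$ and $\xi^1,\xi^2,\zeta$ the generating processes, the minimiser optimises each scenario independently.

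For $V_*\le 0$ I would fix any $\sigma\in\te^R$ with generating process $\zeta$ and respond scenario by scenario using the pure-time formulae $N^i(\tau_i=s)=f_s(1-\zeta_{s-})+\int_{[0,s)}g_t\,\ud\zeta_t$ for $s<1$ and $N^i(\tau_i=1)=\int_{[0,1)}g_t\,\ud\zeta_t+h^i\,\Delta\zeta_1$. In scenario $2$ take $\tau_2=1$, giving $N^2=\zeta_{1/2-}-1$; in scenario $1$ let $\tau_1\uparrow\tfrac12$, so that $f_{\tau_1}=1$ and $g\equiv0$ on $[0,\tau_1)$ give $N^1\to 1-\zeta_{1/2-}$. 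Averaging, $\inf_{\tau_1,\tau_2}N((\tau_1,\tau_2),\sigma)\le\tfrac12\big[(1-\zeta_{1/2-})+(\zeta_{1/2-}-1)\big]=0$, and since this holds for every $\sigma$, taking the supremum yields $V_*\le 0$.

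For $V^*>0$ I would feed the maximiser two deterministic candidates and show their maximum is bounded away from $0$ uniformly in the minimiser's strategy. Parametrise a minimiser strategy by the masses $p_i=\xi^i_{1/2-}$, $q_i=\xi^i_{1-}-\xi^i_{1/2-}$, $r_i=1-\xi^i_{1-}$ (mass before $\tfrac12$, on $[\tfrac12,1)$, and at $1$), with $p_i+q_i+r_i=1$. The reformulation gives, for pure $\sigma$, $N^i(\sigma=r)=\xi^i_r$ when $r<\tfrac12$ and $N^i(\sigma=1)=p_i+3q_i+h^i r_i$. Hence $\sigma\uparrow\tfrac12$ produces the value $V_a:=\tfrac12(p_1+p_2)$, while $\sigma=1$ produces $V_b:=\tfrac12\big[(p_1+3q_1+3r_1)+(p_2+3q_2-r_2)\big]=3-p_1-p_2-2r_2$. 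Since $\sup_\sigma N\ge\max(V_a,V_b)$ and $r_2\le 1-p_2$, I would bound $V_b\ge 1-p_1+p_2\ge 1-p_1$ and $V_a\ge\tfrac12 p_1$, so that $\max(V_a,V_b)\ge\max(\tfrac12 p_1,\,1-p_1)\ge\tfrac13$ for every admissible strategy. Taking the infimum over the minimiser gives $V^*\ge\tfrac13>0$, and combined with $V_*\le0$ the game has no value.

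The routine but delicate part is the bookkeeping of the atoms of $\xi^i$ and $\zeta$ at the jump time $t=\tfrac12$ and at the terminal time $T=1$ inside the Lebesgue--Stieltjes integrals of \eqref{eq-functional-in-terms-of-controls}; in particular the identity $\int_{[0,r)}f_t\,\ud\xi^i_t+f_r\Delta\xi^i_r=\int_{[0,r]}f_t\,\ud\xi^i_t$ and the correct pricing $f_{1/2}=3$ of any mass placed exactly at $\tfrac12$ must be tracked. The genuinely non-mechanical step, which I expect to be the main obstacle, is the choice of the two maximiser candidates for the $V^*$ bound: the candidate $\sigma\uparrow\tfrac12$ monetises mass the minimiser commits before the upward jump of $f$, whereas $\sigma=1$ punishes mass she defers to the terminal time, and it is precisely the interaction of the predictable upward jump of $f$ with the predictable downward jump of $g$ (both forbidden together under \ref{ass:regular_gen}) that prevents the minimiser from simultaneously defeating both candidates, leaving the strictly positive gap.
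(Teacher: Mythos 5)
Your proof is correct and follows essentially the same route as the paper: the same minimiser replies ($\tau_1\uparrow\tfrac12$ paired with $\tau_2=1$) establish $V_*\le 0$, and the same two maximiser candidates ($\sigma\uparrow\tfrac12$ and $\sigma=1$) establish $V^*>0$. The only difference is presentational: your explicit $(p_i,q_i,r_i)$ parametrisation gives the uniform bound $V^*\ge\tfrac13$ by direct algebra, whereas the paper reaches the same conclusion (and implicitly the same constant) via a case split on whether $\xi^1_{1/2-}+\xi^2_{1/2-}$ exceeds a fixed $\delta\in(0,1)$.
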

\begin{proof}
First we show that $V_{*}\le 0$. For this step, it is sufficient to restrict our attention to pure stopping times $\tau_1,\tau_2\in[0,1]$ for the informed player (c.f. Remark \ref{rem-Laraki-Solan}). Let $\sigma\in\te^R$ with a (deterministic) generating process $(\zeta_t)$ and fix $\ve\in (0, \frac12)$. For $\tau_1=\frac{1}{2}-\ve$ and $\tau_2=1$ we obtain
\begin{align*}
N((\tau_1,\tau_2),\sigma)&
=\ee\big[\indd{\mcalI=1}(0\cdot\indd{\sigma<\tau_1}+1\cdot\indd{\sigma\ges\tau_1}) + \indd{\mcalI=2}(0\cdot\indd{\sigma<\frac{1}{2}}-1\cdot\indd{\sigma\ges \frac{1}{2}})\big]\\
&=\frac12\big(1-\zeta_{(\frac{1}{2}-\ve)-}\big)-\frac12\big(1-\zeta_{\frac{1}{2}-}\big).
\end{align*}
Therefore, using that $(\zeta_t)$ has \cadlag trajectories we have
\begin{equation*}
\inf_{\tau_1,\tau_2\in[0,1]} N((\tau_1,\tau_2),\sigma) \les \lim_{\ve\to 0} \frac12\cdot(\zeta_{\frac{1}{2}-}-\zeta_{(\frac{1}{2}-\ve)-}) =0.
\end{equation*}
Since the result holds for all $\sigma\in\te^R$ we have $V_*\le 0$.

Next, we demonstrate that $V^{*}>0$. For this step it is sufficient to consider pure stopping times $\sigma\in[0,1]$ for the uninformed player (Remark \ref{rem-Laraki-Solan}). Let $\tau_1,\tau_2\in\te^R$ and let $\xi^1,\xi^2$ be the associated (deterministic) generating processes. Consider first the case in which $\xi^1_{\frac{1}{2}-}+\xi^2_{\frac{1}{2}-}>\delta$ for some $\delta\in(0,1)$ and fix $\ve \in (0, \frac12)$. For $\sigma=\frac{1}{2}-\ve$ we have
\begin{align*}
N((\tau_1,\tau_2),\sigma)
&=
\ee\big[\indd{\mcalI=1}(1\cdot\indd{\tau_1\les\sigma}+0\cdot\indd{\sigma<\tau_1}) + \indd{\mcalI=2}(1\cdot\indd{\tau_2\les\sigma}+0\cdot\indd{\sigma<\tau_2})\big]\\
&=  \frac12\big(\xi^1_{\frac{1}{2}-\ve} + \xi^2_{\frac{1}{2}-\ve}\big),
\end{align*}
thus implying
\begin{equation}\label{eq:last0}
\sup_{\sigma\in[0,1]} N((\tau_1,\tau_2),\sigma) \ges \lim_{\sigma\to \frac{1}{2}-} N((\tau_1,\tau_2),\sigma)=  \frac12(\xi^1_{\frac{1}{2}-}+\xi^2_{\frac{1}{2}-})>\frac{\delta}{2}>0.
\end{equation}
If, instead, $\xi^1_{\frac{1}{2}-}+\xi^2_{\frac{1}{2}-}\le\delta$ so that, in particular, $\xi^1_{\frac{1}{2}-}\vee\xi^2_{\frac{1}{2}-}\le\delta$, then
\begin{equation}\label{eq:last1}
\begin{aligned}
\sup_{\sigma\in[0,1]} N((\tau_1,\tau_2),\sigma)
&\ges 
N((\tau_1,\tau_2),1)\\
&\ge 
\ee\big[\indd{\mcalI=1}(1\cdot\indd{\tau_1<\frac{1}{2}}+3\cdot\indd{\tau_1\ge\frac{1}{2}}) + \indd{\mcalI=2}(-1)\big]\\
&\ges \frac{1}{2}\left(\xi^1_{\frac{1}{2}-}+3\big(1-\xi^1_{\frac{1}{2}-}\big)\right) - \frac{1}{2}
=1-\xi^1_{\frac{1}{2}-}\ge 1-\delta>0.
\end{aligned}
\end{equation}
Combining \eqref{eq:last0} and \eqref{eq:last1} we have $V^*>0$.
\end{proof}


\bibliographystyle{abbrvnat}
\bibliography{biblio}
\end{document}